\renewcommand{\a}{\alpha}
\renewcommand{\b}{\beta}
\newcommand{\g}{\gamma}
\renewcommand{\(}{\left\(}
\renewcommand{\)}{\right\)}
\renewcommand{\[}{\left\[}
\renewcommand{\]}{\right\]}
\numberwithin{equation}{section}
\theoremstyle{plain}
\newtheorem{theorem}{Theorem}[section]
\newtheorem{lemma}[theorem]{Lemma}
\newtheorem{corollary}[theorem]{Corollary}
\newtheorem{proposition}[theorem]{Proposition}
\newtheorem{example}[]{Example}
\newtheorem{remark}[]{Remark}
\def\proof{\@ifnextchar[{\@oproof}{\@nproof}}
\def\@oproof[#1][#2]{\trivlist\item[\hskip\labelsep\textit{#2 Proof of\
		#1.}~]\ignorespaces}
\def\@nproof{\trivlist\item[\hskip\labelsep\textit{Proof.}~]\ignorespaces}
\def\@tocline#1#2#3#4#5#6#7{\relax
	\ifnum #1>\c@tocdepth 
	\else
	\par \addpenalty\@secpenalty\addvspace{#2}%
	\begingroup \hyphenpenalty\@M
	\@ifempty{#4}{%
		\@tempdima\csname r@tocindent\number#1\endcsname\relax
	}{%
		\@tempdima#4\relax
	}%
	\parindent\z@ \leftskip#3\relax \advance\leftskip\@tempdima\relax
	\rightskip\@pnumwidth plus4em \parfillskip-\@pnumwidth
	#5\leavevmode\hskip-\@tempdima
	\ifcase #1
	\or\or \hskip 1em \or \hskip 2em \else \hskip 3em \fi%
	#6\nobreak\relax
	\dotfill\hbox to\@pnumwidth{\@tocpagenum{#7}}\par
	\nobreak
	\endgroup
	\fi}
\begin{document}
	\title[]{Mordell-Tornheim zeta functions and functional equations for Herglotz-Zagier type functions}
	\author{Atul Dixit}
	\author{Sumukha Sathyanarayana}
	\author{N. Guru Sharan }
	\address{Department of Mathematics, Indian Institute of Technology Gandhinagar, Palaj, Gandhinagar, Gujarat-382355, India}
	\email{adixit@iitgn.ac.in; sumukha.s@iitgn.ac.in; gurusharan.n@iitgn.ac.in}
	\thanks{2020 \textit{Mathematics Subject Classification.} Primary 11M41, 39B32; Secondary 33E20.\\
		\textit{Keywords and phrases.} Mordell-Tornheim zeta function, Herglotz-Zagier function, functional equations, analytic continuation, Fekete polynomials, character polylogarithms}
	
	\begin{abstract}
		The Mordell-Tornheim zeta function and the Herglotz-Zagier function $F(x)$ are two important functions in Mathematics. By generalizing a special case of the former, namely $\Theta(z, x)$, we show that the theories of these functions are inextricably woven. We obtain a three-term functional equation for $\Theta(z, x)$ as well as decompose it in terms of the Herglotz-Hurwitz function $\Phi(z, x)$. This decomposition can be conceived as a two-term functional equation for $\Phi(z, x)$. Through this result, we are not only able to get Zagier's identity relating $F(x)$ with $F(1/x)$ but also two-term functional equation for Ishibashi's generalization of $F(x)$, namely, $\Phi_k(x)$ which has been sought after for over twenty years. We further generalize $\Theta(z, x)$ by incorporating two Gauss sums, each associated to a Dirichlet character, and decompose it in terms of an interesting integral which involves the Fekete polynomial as well as the character polylogarithm. This result gives infinite families of functional equations of Herglotz-type integrals out of which only two, due to Kumar and Choie, were known so far. The first one among the two involves the integral $J(x)$ who special values have received a lot of attention, more recently, in the work of Muzzaffar and Williams, and in that of Radchenko and Zagier. Analytic continuation of our generalization of $\Theta(z, x)$ is also accomplished which allows us to obtain transformations between certain double series and Herglotz-type integrals or their explicit evaluations.

	\end{abstract}
	\maketitle
	\tableofcontents
	\section{Introduction}\label{intro}
	
	The Mordell-Tornheim zeta function is defined by
	\begin{align}\label{MT}
		\zeta_{MT}(s_1,s_2,s_3) := \sum_{n=1}^{\infty} \sum_{m=1}^{\infty} \frac{1}{n^{s_1} m^{s_2} (n+m)^{s_3}}.
	\end{align}
	It converges absolutely for Re$(s_1+s_3)>1$, Re$(s_2+s_3)>1$ and  Re$(s_1+s_2+s_3)>2$; see \cite[Theorem 2.2]{bradley-zhou}.
	The special case $s_1=s_2=s_3=s$ of this function is essentially the Witten zeta-function for the semisimple Lie algebra $\mathfrak{sl}(3)$ \cite{zagier-witten}. Tornheim \cite{tornheim}, and later Mordell \cite{mordell} studied special values of $\zeta_{MT}(s_1,s_2,s_3)$ at positive integers, which led Matsumoto \cite{matsumoto-bonn} to aptly coin, for the function in \eqref{MT}, the terminology \emph{Mordell-Tornheim zeta function}.

Matsumoto \cite[Theorem 1]{matsumoto} showed that $\zeta_{MT}(s_1, s_2, s_3)$ can be meromorphically continued to the entire $\mathbb{C}^3$ space with all its singularities lying on the subsets of $\mathbb{C}^3$ defined by one of the equations $s_1+s_3=1-\ell, s_2+s_3=1-\ell,$ and $s_1+s_2+s_3=2$, where $\ell \in \mathbb{N}\cup\{0\}$. However, the precise locations of the singularities of $\zeta_{MT}(s_1, s_2, s_3)$ are not known except in some special cases, for example, $\zeta_{MT}(s, s, s)$, see Romik \cite[Theorem 1.2, 1.3]{romik}. Having information about poles of any meromorphic function is always desirable. For example, Bringmann and Franke \cite{bringmann-franke} used the distribution of the poles of  $\zeta_{MT}(s, s, s)$ to obtain the asymptotic expansion of $r(n)$, the number of $n$-dimensional representations of the group SU$(3)$, counted up to equivalence, as $n\to\infty$. For an up-to-date discussion on these functions, and, in general, on the zeta-functions of root systems of which $\zeta_{MT}(s_1, s_2, s_3)$ is but one case, we refer the reader to a recent book by Komori, Matsumoto, and Tsumura \cite{komori-matsumoto-tsumura}.

In this paper, we consider a generalization of the Mordell-Tornheim zeta function defined for Re$(s_1+s_3)>1$, Re$(s_2+s_3)>1$, Re$(s_1+s_2+s_3)>2$ and $x>0$ by
\begin{align*}
	\Theta(s_1,s_2,s_3,x):=\sum_{n=1}^{\infty}  \sum_{m=1}^{\infty}\frac{1}{n^{s_1}m^{s_2}(n+mx)^{s_3}}.
\end{align*}
Clearly, $\Theta(s_1,s_2,s_3,1)=\zeta_{MT}(s_1, s_2, s_3)$. The meromorphic continuation of $\Theta(s_1,s_2,s_3,x)$ can be done similarly as in the case of $\zeta_{MT}(s_1, s_2, s_3)$. Also, it is easy to establish the following identities for Re$(s_1+s_3)>1$, Re$(s_2+s_3)>1$, Re$(s_1+s_2+s_3)>2$ and $x>0$:
	\begin{align}
	\Theta(s_1,s_2,s_3,x)&= \Theta(s_1-1,s_2,s_3+1,x)+x \Theta(s_1,s_2-1,s_3+1,x),\nonumber\\
	\Theta(s_1,s_2,s_3,x)&= x^{-s_3} \Theta\left(s_2,s_1,s_3,\frac{1}{x}\right). \label{2t}
\end{align}	
We shall focus on the special case of $\Theta(s_1,s_2,s_3,x)$, namely,
\begin{align}\label{Thetadef}
\Theta(z, x):=	\Theta(1,1,z-1,x):=\sum_{n=1}^{\infty}  \sum_{m=1}^{\infty}\frac{1}{nm(n+mx)^{z-1}}, 
\end{align}
which is absolutely convergent for Re$(z)>1$.

While there have been numerous generalizations of $\zeta_{MT}(s_1, s_2, s_3)$ in the literature, $\Theta(s_1,s_2,s_3,x)$ does not seem to have been studied before. Its special case $\Theta(z, x)$ that we focus on has some really nice properties not only when viewed as a function of  $z$ but also of $x$. 

To the best of our knowledge, the principal part of Laurent expansion of $\Theta(z, x)$ around $z=1$ is not known (even for $x=1$). Our first result proved in Section \ref{properties of MT} gives not only the principal part but also the constant term. 
\begin{theorem}\label{principal part}
	Let $x>0$ and \textup{Re}$(z)>1$. As $z\to1$,
	\begin{align}\label{principal part equation}
	\Theta(z, x)=\frac{2}{(z-1)^2}+\frac{2\gamma-\log(x)}{z-1}+\gamma^2-\gamma\log(x)-\frac{\pi^2}{6}+O(|z-1|),
	\end{align}
	where $\gamma$ is Euler's constant.
\end{theorem}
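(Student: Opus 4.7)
My plan is to extract the Laurent expansion from a Mellin-transform representation of $\Theta(z,x)$ followed by an asymptotic subtraction at the origin. The starting point is the elementary identity $a^{-(z-1)} = \Gamma(z-1)^{-1}\int_0^\infty t^{z-2} e^{-at}\,dt$ applied with $a = n+mx$. Inserting this into \eqref{Thetadef}, interchanging sum and integral (justified by absolute convergence for $\Re(z) > 1$), and summing the two decoupled geometric-type series via
\begin{equation*}
\sum_{n=1}^{\infty} \frac{e^{-nt}}{n} = -\log(1-e^{-t}), \qquad \sum_{m=1}^{\infty} \frac{e^{-mxt}}{m} = -\log(1-e^{-xt}),
\end{equation*}
yields, with $s := z-1$,
\begin{equation*}
\Theta(z,x) \;=\; \frac{1}{\Gamma(s)} \int_0^\infty t^{s-1} \log(1-e^{-t})\log(1-e^{-xt})\,dt \;=:\; \frac{J(s)}{\Gamma(s)}.
\end{equation*}

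Next I would pin down the singular part of $J(s)$ at $s=0$. Writing $\log(1-e^{-t}) = \log t + g(t)$ with $g(t) := \log\frac{1-e^{-t}}{t}$ real-analytic at the origin and $g(t) = O(t)$ there, and similarly $\log(1-e^{-xt}) = \log x + \log t + g(xt)$, the integrand equals
\begin{equation*}
(\log t)^2 + (\log x)(\log t) + h(t,x),
\end{equation*}
where a direct expansion shows $h(t,x) = O(t|\log t|)$ as $t \to 0^+$. Splitting $J(s)$ into $\int_0^1 + \int_1^\infty$, the tail is entire in $s$ thanks to exponential decay, and $\int_0^1 t^{s-1} h(t,x)\,dt$ is holomorphic in a neighborhood of $s=0$ by the remainder estimate. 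The only singular contribution therefore comes from
\begin{equation*}
\int_0^1 t^{s-1}\bigl[(\log t)^2 + (\log x)\log t\bigr]\,dt \;=\; \frac{2}{s^3} - \frac{\log x}{s^2},
\end{equation*}
so that $J(s) = 2 s^{-3} - (\log x)\, s^{-2} + A(s)$ with $A$ holomorphic at $0$.

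Finally, combining this with the standard Taylor expansion
\begin{equation*}
\frac{1}{\Gamma(s)} \;=\; s + \gamma s^2 + \left(\frac{\gamma^2}{2} - \frac{\pi^2}{12}\right) s^3 + O(s^4),
\end{equation*}
obtained from $\Gamma(1+s) = 1 - \gamma s + (\gamma^2/2 + \pi^2/12) s^2 + O(s^3)$, and multiplying term-by-term against $J(s)$ while collecting powers of $s$ through $s^0$, one recovers exactly $2(z-1)^{-2} + (2\gamma - \log x)(z-1)^{-1} + \gamma^2 - \gamma \log x - \pi^2/6$, as claimed in \eqref{principal part equation}. The main bookkeeping obstacle is establishing the uniform bound $h(t,x) = O(t|\log t|)$ on $(0,1]$ and verifying that the holomorphic piece $A(s)$ contributes only to the $O(|z-1|)$ error after multiplication by $\Gamma(s)^{-1}$; once these are in hand, the remainder of the argument is routine Taylor-coefficient accounting.
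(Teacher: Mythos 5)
Your proposal is correct, and it reaches the expansion by a somewhat different route than the paper, so a comparison is worth recording. Both arguments start from the same Mellin representation $\Theta(z,x)=\frac{1}{\Gamma(z-1)}\int_0^\infty t^{z-2}\log(1-e^{-t})\log(1-e^{-xt})\,dt$; the difference lies in how the singularity at $t=0$ is extracted. The paper subtracts $\log t$ from only one factor, i.e.\ it works with $\log(1-e^{-xt})\log\bigl(\tfrac{1-e^{-t}}{t}\bigr)$, which makes the remaining integral analytic at $z=1$ and pushes the entire principal part into the explicit combination $x^{-(z-1)}\bigl(\psi(z-1)\zeta(z)+\zeta'(z)-\log(x)\zeta(z)\bigr)$; the Laurent expansion is then read off from the known expansions of $\psi$, $\zeta$ and $\zeta'$ around $z=1$ (in which the first Stieltjes constant $\gamma_1$ appears and cancels between $\psi(z-1)\zeta(z)$ and $\zeta'(z)$). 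You instead perform a purely local subtraction: split the integral at $t=1$, peel off $(\log t)^2+(\log x)\log t$ whose Mellin transforms over $(0,1)$ are the elementary $2s^{-3}-(\log x)s^{-2}$, check that the remainder is holomorphic near $s=0$, and multiply by the Taylor series of $1/\Gamma(s)$. I verified your bookkeeping: the bound $h(t,x)=O_x(t|\log t|)$ holds term by term, the expansion $1/\Gamma(s)=s+\gamma s^2+(\tfrac{\gamma^2}{2}-\tfrac{\pi^2}{12})s^3+O(s^4)$ is right, and the product reproduces $\frac{2}{(z-1)^2}+\frac{2\gamma-\log x}{z-1}+\gamma^2-\gamma\log x-\frac{\pi^2}{6}$ exactly. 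Your version is more self-contained (no Stieltjes constants, only the $1/\Gamma$ expansion), whereas the paper's version yields as a by-product the identity \eqref{befend}, which is reused later as the definition of $I(z,x)$ and underpins the $H_1$, $H_2$ machinery of Section \ref{three-term Feq of theta}; that reuse is presumably why the authors chose their formulation.
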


\noindent
Observe that for Re$(z)>1$, we have 
$\Theta(z,z,0,1)=\zeta^2(z)$,
and hence as $z\to1$,
\begin{align*}
\Theta(z,z,0,1)=\frac{1}{(z-1)^2}+\frac{2\gamma}{z-1}+\gamma^2-2\gamma_1+O(|z-1|),
\end{align*}
whereas from Theorem \ref{principal part}, we have
\begin{align*}
	\Theta(1, 1, z-1, 1)=\frac{2}{(z-1)^2}+\frac{2\gamma}{z-1}+\gamma^2-\frac{\pi^2}{6}+O(|z-1|).
\end{align*}
It readily follows from \eqref{2t} that
\begin{align*}
	\Theta(z, x)=x^{1-z}\Theta\left(z, \frac{1}{x}\right).
\end{align*}
Functional equations of this type are called \emph{two-term functional equations}. Such functional equations abound in Mathematics. Indeed, the inversion property of a modular form is one of these examples.  Two-term functional equations are also ubiquitous in Ramanujan's notebooks and his lost notebook. We state one such beautiful transformation on page 220 of the Lost Notebook. Let
\begin{align}\label{lambda}
	\lambda(x):=\psi(x)+\frac{1}{2x}-\log x,
\end{align}
where $\psi(x)=\Gamma'(x)/\Gamma(x)$ is the logarithmic derivative of Gamma function or the so-called \emph{digamma function}. Let $\xi(s)$ and $\Xi(t)$ denote Riemann's functions respectively defined by \cite[p.~16, Equations (2.1.12), (2.1.14)]{titch}
\begin{align}
	\xi(s)&:=(s-1)\pi^{-\tfrac{1}{2}s}\Gamma(1+\tfrac{1}{2}s)\zeta(s),\nonumber\\
	\Xi(t)&:=\xi(\tfrac{1}{2}+it),\nonumber
\end{align}
Then for $\a,\b$ positive\footnote{The result actually holds for any complex $\a$ and $\b$ lying in the slit-complex plane having branch cut from $(-\infty, 0]$.} such that $\a\b=1$,
\begin{align}\label{w1.26}
	\sqrt{\a}&\left\{\frac{\gamma - \log(2 \pi \a)}{2\a} +\sum_{n=1}^{\infty}\lambda(n\a)\right\}=\sqrt{\b}\left\{\frac{\gamma - \log(2 \pi \b)}{2\b} +\sum_{n=1}^{\infty}\lambda(n\b)\right\}\nonumber\\
	&=-\frac{1}{\pi^{3/2}}\int_{0}^{\infty}\left|\Xi\left( \frac{t}{2}\right) \Gamma\left(\frac{-1+it}{4} \right)\right|^2\frac{\cos\left(\frac{t}{2}\log\a \right)}{1+t^2}dt.
\end{align}
This transformation was first proved by Berndt and the first author in \cite{bcbad}.

Now there is another set of functional equations called \emph{three-term functional equations} which are comparatively rare. Nevertheless, some very important objects in Mathematics satisfy such equations, and the relations themselves are quite important. For example, consider the following famous formula of Ramanujan for odd zeta values \cite[p.~173, Ch. 14, Entry 21(i)]{ramnote}, \cite[p.~319-320, formula (28)]{lnb}, \cite[p.~275-276]{bcbramsecnote}, valid for Re$(\a), \textup{Re}(\b)>0$ with $\a\b=\pi^2$ and $m\in\mathbb{Z}\backslash\{0\}$:
\begin{align}\label{zetaodd}
	\a^{-m}\left\{\frac{1}{2}\zeta(2m+1)+\sum_{n=1}^{\infty}\frac{n^{-2m-1}}{e^{2\a n}-1}\right\}&=(-\b)^{-m}\left\{\frac{1}{2}\zeta(2m+1)+\sum_{n=1}^{\infty}\frac{n^{-2m-1}}{e^{2\b n}-1}\right\}\nonumber\\
	&\quad-2^{2m}\sum_{j=0}^{m+1}\frac{(-1)^jB_{2j}B_{2m+2-2j}}{(2j)!(2m+2-2j)!}\a^{m+1-j}\b^j,
\end{align}
and where $B_j$ are the Bernoulli numbers.  The second term on the right-hand side of \eqref{zetaodd} is essentially the generalized period polynomial associated to the weight $2k$ Eisenstein series, which can be recast in the form $\mathscr{P}_k(x):=\sum_{r=0}^{2k}\frac{B_rB_{2k-r}}{r!(2k-r)!}x^{r-1}$. Vlasenko and Zagier  \cite[p.~42, Equation (30)]{vz} have shown that $\mathscr{P}_k(x)$ not only satisfies a two-term functional equation but also a three-term one:
	\begin{align*}
	\mathscr{P}_k(x)=x^{2k-2}\mathscr{P}_k\left(\frac{1}{x}\right),\hspace{4mm} \mathscr{P}_k(x)-\mathscr{P}_k(x+1)-(x+1)^{2k-2}\mathscr{P}_k\left(\frac{x}{x+1}\right)=0.
\end{align*}
The period functions associated to Maass forms and real analytic Eisenstein series also satisfy three-term relations of the form \cite[p.~193]{lewzag} (see also \cite[Equation (1)]{betcon})
\begin{align*}
	\phi(x)-\phi(x+1)-(x+1)^{-2s}\phi\left(\frac{x}{x+1}\right)=0.
\end{align*}
Folsom \cite[Theorem 1]{folsom} has recently obtained a three-term relation for a period function associated to a  twisted Eisenstein series. Zagier \cite{zagier3term} has written an interesting article on such three-term relations and their applications. 

What is interesting to know here is that our function $\Theta(z, x)$ also satisfies a three-term functional equation in the variable $x$. This is stated next and proved in Section \ref{three-term Feq of theta}.
		\begin{theorem}\label{theta-3t}
	For $\textup{Re}(z)>1$ and $x>0$,
	\begin{align}\label{theta-3t eqn}
		&\Theta(z, x)-	\Theta(z, x+1)-(x+1)^{1-z}	\Theta\left(z, \frac{x}{x+1}\right)\nonumber\\
		&=-(x+1)^{1-z}\Theta(z,1)+\left(1+x^{1-z}-(x+1)^{1-z}\right)\zeta(z+1)\nonumber\\
		&\quad-\frac{1}{\Gamma(z-1)}\int_{0}^{\infty}t^{z-2}\left\{\textup{Li}_{2}\left(\frac{e^{-(x+1)t}-e^{-xt}}{e^{-(x+1)t}-1}\right)+\textup{Li}_{2}\left(\frac{e^{-(x+1)t}-e^{-t}}{e^{-(x+1)t}-1}\right)\right\}\, dt.
	\end{align}
\end{theorem}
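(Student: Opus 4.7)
The plan is to begin with the Gamma-function integral representation
\begin{equation*}
\Theta(z,x)=\frac{1}{\Gamma(z-1)}\int_{0}^{\infty}t^{z-2}\log(1-e^{-t})\log(1-e^{-xt})\,dt,
\end{equation*}
which follows from inserting $\frac{1}{(n+mx)^{z-1}}=\frac{1}{\Gamma(z-1)}\int_{0}^{\infty}t^{z-2}e^{-(n+mx)t}\,dt$ (valid for $\textup{Re}(z)>1$) into \eqref{Thetadef}, swapping sum and integral by absolute convergence, and collapsing the inner double sum via $-\log(1-e^{-u})=\sum_{n\geq 1}e^{-nu}/n$. Applied to $\Theta(z,x)$, $\Theta(z,x+1)$ and $\Theta(z,x/(x+1))$ in turn — the last of these after the substitution $u=(x+1)t$, whose Jacobian produces precisely the factor $(x+1)^{1-z}$ that cancels the coefficient appearing in \eqref{theta-3t eqn} — the entire left-hand side of \eqref{theta-3t eqn} reduces to a single integral
\begin{equation*}
\frac{1}{\Gamma(z-1)}\int_{0}^{\infty}t^{z-2}\bigl[L(t)L(xt)-L(t)L((x+1)t)-L(xt)L((x+1)t)\bigr]\,dt,
\end{equation*}
where $L(u):=\log(1-e^{-u})$.

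The next step is to rewrite this bilinear combination of logarithms in terms of dilogarithms. Setting $a:=e^{-t}$ and $b:=e^{-xt}$ so that $ab=e^{-(x+1)t}$, the elementary algebraic identity
\begin{equation*}
\log(1-a)\log(1-b)-\log(1-a)\log(1-ab)-\log(1-b)\log(1-ab)=\log\!\left(\frac{1-a}{1-ab}\right)\log\!\left(\frac{1-b}{1-ab}\right)-\log^{2}(1-ab),
\end{equation*}
combined with Abel's five-term relation in the form
\begin{equation*}
\log\!\left(\frac{1-a}{1-ab}\right)\log\!\left(\frac{1-b}{1-ab}\right)=\textup{Li}_{2}(a)+\textup{Li}_{2}(b)-\textup{Li}_{2}(ab)-\textup{Li}_{2}\!\left(\frac{a(1-b)}{1-ab}\right)-\textup{Li}_{2}\!\left(\frac{b(1-a)}{1-ab}\right),
\end{equation*}
transforms the bracket into a signed sum of five dilogarithms minus $\log^{2}(1-ab)$. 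Direct simplification yields $\frac{a(1-b)}{1-ab}=\frac{e^{-(x+1)t}-e^{-t}}{e^{-(x+1)t}-1}$ and $\frac{b(1-a)}{1-ab}=\frac{e^{-(x+1)t}-e^{-xt}}{e^{-(x+1)t}-1}$, matching the dilogarithm arguments on the right-hand side of \eqref{theta-3t eqn}.

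Finally I would evaluate the six resulting Mellin integrals termwise. For $c\in\{1,x,x+1\}$, expanding $\textup{Li}_{2}(e^{-ct})=\sum_{n\geq 1}e^{-cnt}/n^{2}$ and using $\int_{0}^{\infty}t^{z-2}e^{-cnt}\,dt=\Gamma(z-1)(cn)^{1-z}$ gives $\frac{1}{\Gamma(z-1)}\int_{0}^{\infty}t^{z-2}\textup{Li}_{2}(e^{-ct})\,dt=c^{1-z}\zeta(z+1)$, producing the contribution $\bigl(1+x^{1-z}-(x+1)^{1-z}\bigr)\zeta(z+1)$; the $-\log^{2}(1-e^{-(x+1)t})$ piece, after the substitution $u=(x+1)t$, reduces to $-(x+1)^{1-z}\Theta(z,1)$ by the integral representation of $\Theta(z,1)$ derived in the first step; and the two surviving dilogarithms reassemble into the stated integral on the right-hand side of \eqref{theta-3t eqn}. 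The main obstacle I anticipate is pinpointing the correct form of Abel's five-term relation — namely the version whose logarithmic side is the cross product $\log(\tfrac{1-a}{1-ab})\log(\tfrac{1-b}{1-ab})$ — so that it dovetails with the bilinear combination of three $L$'s produced by the integral representation; once this pairing is recognized, the remaining work (Fubini from absolute convergence for $\textup{Re}(z)>1$, the substitution $u=(x+1)t$, and the standard Mellin evaluation of $\textup{Li}_{2}(e^{-ct})$) is routine.
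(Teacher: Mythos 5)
Your proof is correct, and it takes a noticeably more direct route than the paper's. The paper reaches \eqref{theta-3t eqn} by first relating $\Theta(z,x)$ to the integral $I(z,x)$ of \eqref{int izx} via \eqref{izx}, splitting $I=H_1-H_2$ as in \eqref{i=h1-h2}, proving separate three-term functional equations \eqref{3 term H1} and \eqref{H2 3 term} for the two auxiliary integrals (the latter via Hill's formula and the Landen-type relation \eqref{to be also referred}), and then recombining "after a lot of cancellation." You instead start from the clean Mellin representation $\Theta(z,x)=\frac{1}{\Gamma(z-1)}\int_0^\infty t^{z-2}\log(1-e^{-t})\log(1-e^{-xt})\,dt$ — which the paper itself derives only later, in the proof of Theorem \ref{J-lemma} — collapse the whole left-hand side into a single integral of $L(t)L(xt)-L(t)L((x+1)t)-L(xt)L((x+1)t)$, and resolve that bilinear form with the trivial identity $AB-AC-BC=(A-C)(B-C)-C^2$ followed by the five-term relation in the form $\log\bigl(\tfrac{1-a}{1-ab}\bigr)\log\bigl(\tfrac{1-b}{1-ab}\bigr)=\textup{Li}_2(a)+\textup{Li}_2(b)-\textup{Li}_2(ab)-\textup{Li}_2\bigl(\tfrac{a(1-b)}{1-ab}\bigr)-\textup{Li}_2\bigl(\tfrac{b(1-a)}{1-ab}\bigr)$ (Lewin's equation (1.22); I checked it, and your worry about pinpointing the right version is unfounded — this is exactly the form needed, with all five arguments in $[0,1)$ for $a=e^{-t}$, $b=e^{-xt}$, so there are no branch issues). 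The termwise Mellin evaluations then give precisely the three pieces of the right-hand side, and each of the six separated integrals converges absolutely for $\textup{Re}(z)>1$, so the splitting is legitimate. Both proofs ultimately rest on the same five-term dilogarithm identity (the paper's Hill formula is an equivalent arrangement of it), but yours avoids the $H_1$, $H_2$ machinery entirely; what the paper's longer route buys is that $H_1$, $H_2$ and $I(z,x)$ are reused elsewhere — in Theorem \ref{almost}, the two-term functional equations, and the limits $L_k^*(x)$ of Theorem \ref{thm-2fe-ishibashi} — whereas your argument is self-contained for this one theorem.
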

The Mellin transform of the sum of dilogarithms appearing in Theorem \ref{theta-3t} looks intractable as far as its closed-form evaluation is concerned.

The function  $\Theta(z, x)$  can be decomposed in terms of a new generalization of the Herglotz-Zagier function $F(x)$. The function $F(x)$ is defined by
\begin{equation}\label{herglotzdef}
	F(x):=\sum_{n=1}^{\infty}\frac{\psi(nx)-\log(nx)}{n}\hspace{5mm}\left(x\in\mathbb{C}\backslash(-\infty,0]\right).
\end{equation} 
Its importance warrants a discussion on some of the beautiful properties it satisfies.
It arose in Zagier's Kronecker limit formula for real quadratic fields, and is similar to  a function studied by Herglotz \cite{herglotz}, hence the name, the \emph{Herglotz-Zagier function}. Since \cite[p.~259, formula 6.3.18]{as}
\begin{equation*}
	\psi(s)=\log(s)-\frac{1}{2s}+O\left(\frac{1}{s^2}\right)\hspace{8mm}( |\arg(s)|<\pi),	
\end{equation*}
as $s\to\infty$, it is clear that the series in \eqref{herglotzdef} converges absolutely for any $x\in\mathbb{C}\backslash(-\infty,0]$.
It satisfies beautiful two- and three-term functional equations due to Zagier \cite[Equations (7.4), (7.8)]{zagier}:
\begin{align}
	&F(x)+F\left(\frac{1}{x}\right)=2F(1)+\frac{1}{2}\log^{2}(x)-\frac{\pi^2}{6x}(x-1)^2,\label{fe2}\\
	&F(x)-F(x+1)-F\left(\frac{x}{x+1}\right)=-F(1)+\textup{Li}_2\left(\frac{1}{1+x}\right),\label{fe1}
	\end{align}
	where \cite[Equation (7.12)]{zagier}
	\begin{equation}\label{ef1}
F(1)=-\frac{1}{2}\g^2-\frac{\pi^2}{12}-\g_1,
\end{equation}
with $\g$ and $\g_1$ being the Euler and the first Stieltjes constants respectively, and $\textup{Li}_{2}(z)$ is the dilogarithm function defined, for $|z|<1$, by
\begin{align}\label{li2z}
	\textup{Li}_{2}(z):=\sum_{n=1}^{\infty}\frac{z^n}{n^2},
\end{align}
and for any $z\in\mathbb{C}$ by
	\begin{equation}\label{didef}
	\textup{Li}_2(z):=-\int_{0}^{z}\frac{\log(1-u)}{u}\, du.
\end{equation}
Very recently, Radchenko and Zagier \cite{raza} have extensively studied further properties of $F(x)$. Note that the transformation \eqref{w1.26} is associated with the function $\lambda(x)$ (defined in \eqref{lambda}) in the same way as 
 \eqref{fe2} is associated with \eqref{herglotzdef}. Thus Ramanujan seems to be the first mathematician to understand the importance of Herglotz-type functions and their two-term functional equations; see \cite{gupta-kumar} for more details. However, Ramanujan was probably unaware of three-term functional equations.
 
 Also observe that while the three-term relation for $F(x)$ involves dilogarithm on the right-hand side, that for $\Theta(z, x)$ in Theorem \ref{theta-3t} involves an integral of a sum of dilogarithm functions.
	
As mentioned before, $\Theta(z, x)$ can be expressed in terms of a new generalization of $F(x)$. For $\textup{Re}(z)>0, z\neq1,$ and $x\in\mathbb{C}\backslash(-\infty,0]$, we define this generalized Herglotz function by
\begin{align}\label{hhf-def}
	\Phi(z, x):=\sum_{n=1}^{\infty} \frac{1}{n}\left( \zeta(z,nx)- \frac{(nx)^{1-z}}{z-1} \right),
\end{align}	 
where $\zeta(z, x)$ is the Hurwitz zeta function, and call it \emph{the Herglotz-Hurwitz function}. Observe that for $z\in\mathbb{C}\backslash\{1\}$, the asymptotic expansion of $\zeta(z,x)$ \cite[p.~25]{mos} for large $|x|$ and $|\arg$ $x|<\pi$ is given by
\begin{equation*}
	\zeta(z,x)=\frac{1}{\Gamma(z)}\left(x^{1-z}\Gamma(z-1)+\frac{1}{2}\Gamma(z)x^{-z}+\sum_{k=1}^{m-1}\frac{B_{2k}}{(2k)!}\Gamma(z+2k-1)x^{-2k-z+1}\right)+O(x^{-2m-z-1}),
\end{equation*}
which is why the series in \eqref{hhf-def} converges absolutely for $x\in\mathbb{C}\backslash(-\infty,0]$. Note that $\Phi(z, x)$ has a removable singularity at $z=1$.

As alluded to before, we give below the decomposition theorem for $\Theta(z, x)$ which can also be viewed as a two-term functional equation for $\Phi(z, x)$.
\begin{theorem}\label{decompostion theorem}
	For \textup{Re}$(z)>1$ and $x>0$,
		\begin{align}\label{decomposition}
		\Phi(z, x)+x^{1-z}\Phi\left(z, \frac{1}{x}\right)=\sum_{n=1}^{\infty} \sum_{m=1}^{\infty} \frac{1}{mn (n+mx)^{z-1}}-\frac{(1+x^{1-z})}{z-1}\zeta(z)+(x+x^{-z})\zeta(z+1).
	\end{align}
\end{theorem}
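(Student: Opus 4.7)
The plan is to expand $\Phi(z,x)$ directly from its defining series \eqref{hhf-def}, write an analogous expansion for $x^{1-z}\Phi(z,1/x)$, and then observe that the two double sums combine via a one-line partial-fractions identity to give the right-hand side of \eqref{decomposition}.

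First, insert $\zeta(z,nx)=(nx)^{-z}+\sum_{m=1}^{\infty}(m+nx)^{-z}$ into \eqref{hhf-def}. For $\textup{Re}(z)>1$, the resulting iterated series converges absolutely: the inner $m$-sum is $O((nx)^{1-\textup{Re}(z)})$, whence the $n$-sum is majorised by a constant multiple of $\sum_{n}n^{-\textup{Re}(z)}<\infty$. Hence Fubini applies and all rearrangements are legitimate. This yields
$$\Phi(z,x)=x^{-z}\zeta(z+1)-\frac{x^{1-z}}{z-1}\zeta(z)+\sum_{n=1}^{\infty}\sum_{m=1}^{\infty}\frac{1}{n(m+nx)^{z}}.$$
Replacing $x$ by $1/x$ (note $1/x>0$), using $(m+n/x)^{-z}=x^{z}(mx+n)^{-z}$, and multiplying through by $x^{1-z}$, the scalar factors simplify to give
$$x^{1-z}\Phi\!\left(z,\tfrac{1}{x}\right)=x\,\zeta(z+1)-\frac{1}{z-1}\zeta(z)+x\sum_{n=1}^{\infty}\sum_{m=1}^{\infty}\frac{1}{n(n+mx)^{z}}.$$

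Adding the two expansions, the non-series pieces already reproduce $(x+x^{-z})\zeta(z+1)-\frac{1+x^{1-z}}{z-1}\zeta(z)$, which accounts for the second and third terms on the right of \eqref{decomposition}. For the double series, relabel $n\leftrightarrow m$ in the first sum and combine with the second to obtain
$$\sum_{n,m\ge 1}\frac{1}{(n+mx)^{z}}\left(\frac{1}{m}+\frac{x}{n}\right)=\sum_{n,m\ge 1}\frac{n+mx}{nm(n+mx)^{z}}=\sum_{n,m\ge 1}\frac{1}{nm(n+mx)^{z-1}},$$
which is precisely the remaining term on the right of \eqref{decomposition}. The argument is essentially bookkeeping; there is no real obstacle beyond the (routine) absolute-convergence verification that justifies Fubini, and the key algebraic observation $\frac{1}{m}+\frac{x}{n}=\frac{n+mx}{nm}$ that reduces the combined double series to $\sum_{n,m}\frac{1}{nm(n+mx)^{z-1}}$.
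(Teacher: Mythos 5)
Your proof is correct and follows essentially the same route as the paper: expand the Hurwitz zeta as a series, peel off the $(nx)^{-z}$ term to produce $\zeta(z+1)$, and combine the two double sums via the identity $\frac{1}{m}+\frac{x}{n}=\frac{n+mx}{nm}$ after an index swap. The only cosmetic difference is that you justify the Fubini step explicitly, which the paper dispatches with a one-line remark on absolute convergence.
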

The above result, proved in Section \ref{decomp}, provides an analytic continuation of $\Theta(z, x)$ in the region Re$(z)>0, z\neq 1$, since all other functions in \eqref{decomposition} are analytic there.  

Letting $z\to1$ in \eqref{decomposition} leads to \eqref{fe2} as shown in Corollary \ref{k=0}. But now a natural question arises - \emph{what do we get upon differentiating both sides of \eqref{decomposition} $k$ times with respect to $z$ and then letting $z\to 1$?}

To answer this question, we first need to consider the $k^{\textup{th}}$ order Herglotz function of Ishibashi \cite{ishibashi} defined for $k\in\mathbb{N}$ by
\begin{equation}\label{ishibashi-def}
	\Phi_k(x):=\sum_{n=1}^{\infty}\frac{k\psi_{k-1}(nx)-\log^{k}(nx)}{n}\hspace{5mm}\left(x\in\mathbb{C}\backslash(-\infty,0]\right),
\end{equation}
where  $\psi_k(x)$ is the \emph{generalized digamma function} defined by \cite[Proposition 10]{dilcher}
\begin{equation*}
	\psi_k(x):=\frac{\Gamma_k'(x)}{\Gamma_k(x)}=-\gamma_k-\frac{\log^{k}(x)}{x}-\sum_{n=1}^{\infty}\left(\frac{\log^{k}(n+x)}{n+x}-\frac{\log^{k}(n)}{n}\right),
\end{equation*}
with $\gamma_k$ being the $k^{\textup{th}}$ Stieltjes constant with $\gamma_0=\gamma$, the Euler's constant. Observe that $\psi_0(x)=\psi(x)$.

 The anti-derivative of $\psi_k(x)$, namely, $R_k(x)$, is a generalization of the function $R(x)$ studied by Deninger \cite{deninger}, and is defined as the solution to the difference equation
\begin{equation*}
	f(x+1)-f(x)=\log^{k}(x),\hspace{5mm}(f(1)=(-1)^{k+1}\zeta^{(k)}(0)).
\end{equation*}
See \cite{dss} for the details and history of $\psi_k(x)$ and \cite{dilcher} for the properties of the corresponding gamma function $\Gamma_k(x)$. Ishibashi evaluated all Laurent series coefficients of a zeta function associated to an indefinite quadratic form in terms of $\Phi_k(x)$, for which only the constant term was known earlier, due to Zagier \cite{zagier}. 

Clearly, $\Phi_1(x)=F(x)$, and so a natural question that one thinks of is, \emph{do there exist two- and three-term functional equations for $\Phi_k(x)$ for $k>1$?}

We partially answer this question by obtaining a two-term functional equation involving $\Phi_k(x)$ and $\Phi_k(1/x)$ and a three-term functional equation involving  $\Phi'_k(x)$, $\Phi'_k(x-1)$  and $\Phi'_k((x-1)/x)$ in Theorems \ref{thm-2fe-ishibashi} and \ref{Thm 3term-curious} respectively. However, these functional equations are novel in that the two-term one involves  a weighted linear combination of  $\Phi_j(x)$ and $\Phi_j(1/x)$ whereas the three-term relation involves a weighted linear combination of  $\Phi'_j(x)$, $\Phi'_j(x-1)$ and $\Phi'_j((x-1)/x)$ for \emph{all} $j$ such that $0\leq j\leq k$. 
The existence of such functional equations relating just $\Phi_k(x)$ with $\Phi_k(1/x)$ or $\Phi'_k(x)$  with $\Phi'_k(x-1)$  and $\Phi'_k((x-1)/x)$ seems improbable for $k>1$. The only way we know of getting them is by considering a one-variable generalization of $F(x)$, that is, the Herglotz-Hurwitz function 
$\Phi(z, x)$, and then differentiating \eqref{decomposition} for two-term (or \eqref{three term in z} for the three-term) $k$ times with respect to $z$  before letting $z\to 1$. This process is what we call the \textit{Hurwitz lift}.

To give an idea of how involved the two-term functional equation for higher $k$ is,  we give below the special case $k=1$ of Theorem \ref{thm-2fe-ishibashi} which is a  functional equation involving $\Phi_2$ as well as $\Phi_1$ while the statement of Theorem \ref{thm-2fe-ishibashi} itself is saved for Section \ref{2fe-ishibashi}. For $x\in\mathbb{C}\backslash(-\infty,0]$, we have
\small\begin{align}\label{k=1}
	&-\frac{\log(x)}{2} \left( \Phi_1(x) -\Phi_1 \left( \frac{1}{x} \right) \right) + \frac{1}{2} \left( \Phi_2(x) +\Phi_2 \left( \frac{1}{x} \right) \right)  \nonumber\\
	&=\frac{\pi^2}{12}\log(x) \left(x -\frac{1}{x}\right) + \zeta'(2) \left(x +\frac{1}{x}\right) - \frac{1}{2} \log(x)\left( \frac{1}{2} \log^2(x) +2 \Phi_1(1) +\frac{\pi^2}{3} \right) \nonumber\\
	& \quad-\frac{\gamma^2}{2} \log^2(x) +2 c_1 \log(x) + \frac{1}{3}\log^3(x) + \Phi_2(1) -2 \zeta'(2) +L_0^*(x),
\end{align} 
where 
\begin{align*}
	L_0^*(x):= \lim_{z \to 1}\frac{1}{\Gamma(z)} \int_{0}^{\infty}t^{z-2}\log \left( \frac{1-e^{-xt}}{1-e^{-t}} \right) \log \left( \frac{1-e^{-t}}{t}  \right) \, dt
\end{align*}
and $c_1$ is the constant term of the Laurent series of $\Gamma(s)\zeta(s)$ around $s=1$. Recall that $\Phi_1(1)=F(1)$ which is an explicit constant given in \eqref{ef1}. More on the limit $L_{0}^{*}(x)$, and, in general, on $L_{k}^{*}(x)$ for $k\geq1$, will be said in Section \ref{2fe-ishibashi}.

After Zagier, Novikov \cite{novikov}  gave a different expression for the Kronecker limit formula for a real quadratic field. It involves a function defined for $u\in\mathbb{C}\backslash(1,\infty), u\neq0, v\in\mathbb{C}\backslash[1,\infty)$ and $x>0$ by
\begin{align}\label{script F def}
	\mathscr{F}(x;u,v) :=  \int_{0}^{1} \frac{ \log(1-ut^x) }{v^{-1}-t} \, dt. 
\end{align}
In their beautiful work, Kumar and Choie \cite{kumar-choie} termed this function as the \emph{Herglotz-Zagier-Novikov function}. This function is not only related to the Herglotz-Zagier function $F(x)$ (see \cite[p.~4]{kumar-choie} but also to the integrals
\begin{align}\label{JT-def}
	J(x):=\int_{0}^{1}\frac{\log(1+t^x)}{1+t}\, dt,\hspace{4mm}
	T(x):=\int_{0}^{1}\frac{\tan^{-1}(t^x)}{1+t^2}\, dt,
\end{align}
where $x>0$. While the explicit evaluation of a particular case of $J(x)$, namely, $J(4+\sqrt{15})$, occurred as early as in Herglotz's work \cite{herglotz} from 1923, the integral $T(x)$ has been considered more recently and seems to have first appeared in the work of Muzzaffar and Williams \cite{muzwil}. What is common and interesting about these two integrals is that an elementary approach towards their closed-form evaluation has evaded the mathematical community so far. Before the  work of Radchenko and Zagier \cite{raza}, both integrals were handled through algebraic number-theoretic techniques only. Radchenko and Zagier used techniques in analytic number theory to evaluate $J(x)$. They also found a nice relation between $J(x)$ and $F(x)$ \cite[Equation (1.1)]{raza}, namely, for $x>0$,
\begin{align}\label{zaginteval}
	J(x)=F(2x)-2F(x)+F\left(\frac{x}{2}\right)+\frac{\pi^2}{12x}.
\end{align}
Elegant functional equations for $J(x)$ and $T(x)$ were derived by Kumar and Choie who showed that
\begin{align}\label{JT-FE}
	J(x) +J\left(\frac{1}{x}\right) = \log^2(2),\hspace{8mm}
	T(x)+T\left(\frac{1}{x}\right) = \frac{\pi^2}{16}.
\end{align}
At first glance, it looks surprising why the right-hand sides in \eqref{JT-FE} are independent of $x$. We not only resolve this mystery but also obtain a doubly-infinite family of functional equations of which the ones in \eqref{JT-FE} are but two special cases. In doing so, we give a unified treatment of integrals generalizing $J(x)$ and $T(x)$ as  explained below.

Let $\chi_1,\chi_2$ be Dirichlet characters modulo $r$. Let $\zeta_r=e^{2\pi i/r} $ and $G(\ell,\chi)=\sum_{j=1}^{r} \chi(j) e^{2 \pi i j \ell/r}$ be the Gauss sum associated to a Dirichlet character $\chi$. For Re$(z)>0$ and $x>0$, define
\begin{align}
	J_{r,\chi_1, \chi_2}(z,x) := \frac{1}{\Gamma(z)} \int_{0}^{1} \frac{\left( \log \left( \frac{1}{t} \right) \right)^{z-1}}{t(1-t^r)} \left( \sum_{\ell=1}^{r} G(\ell,\chi_2) t^{\ell}\right) \left(  \sum_{j=1}^{r} \chi_1(j) \log \left( 1-\zeta_r^j t^{x} \right) \right) \, dt. \label{J r chi def}
\end{align}

When $\chi_2$ is primitive, the separablility of Gauss sum $G(\ell,\chi_2)$ \cite[p.~168, Theorem 8.15]{Apostol} implies
$\sum_{\ell=1}^{r} G(\ell,\chi_2) t^{\ell}=G(1,\chi_2)\sum_{\ell=1}^{r}\overline{\chi}_2(\ell)t^{\ell}$, and hence
 \begin{align*}
 	J_{r,\chi_1, \chi_2}(z,x) = \frac{G(1,\chi_2)}{\Gamma(z)} \int_{0}^{1} \frac{\left( \log \left( \frac{1}{t} \right) \right)^{z-1}}{t(1-t^r)} \left( \sum_{\ell=1}^{r}\overline{\chi}_2(\ell)t^{\ell}\right) \left(  \sum_{j=1}^{r} \chi_1(j) \log \left( 1-\zeta_r^j t^{x} \right) \right) \, dt. 
 \end{align*}
 Note that the polynomial in $t$ occurring in the numerator of the integrand of \eqref{J r chi def} is the generalized Fekete polynomial associated to the Dirichlet character $\overline{\chi}_2$ of modulus $r$; see \cite[Definition 3.1]{minac-jnt}. In general, it is denoted by
 \begin{equation*}
 F_{\chi}(t):=\sum_{\ell=1}^{r}\chi(\ell)t^{\ell}.
\end{equation*} 
 These polynomials are well-known and have been studied for over two centuries. Indeed, Gauss used them in one of his proofs of the law of quadratic reciprocity. The importance of these polynomials lies not only in the arithmetic of quadratic number fields (such as in the class number formula) but also in the theory of quadratic $L$-functions. 
 
 It is well-known \cite[Proposition 3.3]{minac-jnt} that for a primitive character $\chi$ modulo $r$ and for Re$(z)>0$, the Dirichlet $L$-function satisfies
 \begin{align}\label{lz chi integral}
 	L(z, \chi)=\frac{1}{\Gamma(z)}\int_{0}^{1}\frac{ \left(\log \left( \frac{1}{t} \right) \right)^{z-1}}{t(1-t^r)}\left(\sum_{\ell=1}^{r}\chi(\ell)t^{\ell}\right)\, dt.
 \end{align} 
 Fekete showed that for a quadratic character $\chi$ of conductor $p$, where $p$ is a prime, $L(z, \chi)$ has no real zeros on $(0, \infty)$, provided $F_{\chi}(t)$ does not have any real zeros in the interval $(0, 1)$. Thus the study of their zeros may have implications on the existence of Siegel zeros. For more information, we refer the reader to \cite{sound}.

We now give the functional equation for $J_{r,\chi_1, \chi_2}(z,x)$ which, as mentioned before, give the relations in \eqref{JT-FE} as special cases.

\begin{theorem}\label{J r chi FE}
	Let $r\in\mathbb{N}, r>1,$ and let $\chi_1,\chi_2$ be  any Dirichlet characters modulo $r$. For \textup{Re}$(z)>1$ and $x>0$, 
	\begin{align}\label{fe-general}
		J_{r,\chi_1, \chi_2}(z,x) +x^{1-z} J_{r,\chi_2, \chi_1}\left(z,\frac{1}{x}\right) = - \sum_{n=1}^{\infty} \sum_{m=1}^{\infty} \frac{G(m,\chi_1)G(n,\chi_2)}{m n (n+mx)^{z-1}}.
	\end{align}
	In particular, if $\chi_1$ and $\chi_2$ are primitive, then
		\begin{align}\label{fe-primitive}
		J_{r,\chi_1, \chi_2}(z,x) +x^{1-z} J_{r,\chi_2, \chi_1}\left(z,\frac{1}{x}\right) = -G(1, \chi_1)G(1, \chi_2) \sum_{n=1}^{\infty} \sum_{m=1}^{\infty} \frac{\bar\chi_1(m) \bar\chi_2(n)}{m n (n+mx)^{z-1}}.
	\end{align}
\end{theorem}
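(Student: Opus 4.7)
The plan is to reduce both $J_{r,\chi_1,\chi_2}(z,x)$ and $x^{1-z}J_{r,\chi_2,\chi_1}(z,1/x)$ to explicit double Dirichlet series, and then combine them via a one-line partial-fraction identity. The key observation is that each of the two nontrivial factors in the integrand of \eqref{J r chi def} admits a clean power-series expansion in $t$ with Gauss-sum coefficients.

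First I would expand the logarithm factor using $\log(1-w)=-\sum_{m\geq 1}w^m/m$ with $w=\zeta_r^j t^x$. Interchanging the finite $j$-sum with the $m$-series and using the definition of the Gauss sum gives
\begin{align*}
\sum_{j=1}^{r}\chi_1(j)\log\left(1-\zeta_r^j t^x\right)=-\sum_{m=1}^{\infty}\frac{G(m,\chi_1)}{m}\,t^{xm}.
\end{align*}
Next, I would expand $1/(1-t^r)=\sum_{k\geq 0}t^{rk}$ and use the periodicity $G(n+r,\chi_2)=G(n,\chi_2)$ (immediate from the definition of a Gauss sum) to collapse
\begin{align*}
\frac{1}{t(1-t^r)}\sum_{\ell=1}^{r}G(\ell,\chi_2)\,t^{\ell}=\sum_{n=1}^{\infty}G(n,\chi_2)\,t^{n-1}.
\end{align*}
Substituting both expansions into \eqref{J r chi def} and interchanging the resulting double series with the outer integral (justified by the trivial bound $|G(n,\chi)|\leq r$ together with the absolute convergence of $\sum_{n,m}1/[m(n+xm)^{\textup{Re}(z)}]$ for $\textup{Re}(z)>1$), the substitution $u=\log(1/t)$ evaluates each inner Mellin integral as $\Gamma(z)/(n+xm)^z$, yielding
\begin{align*}
J_{r,\chi_1,\chi_2}(z,x)=-\sum_{n=1}^{\infty}\sum_{m=1}^{\infty}\frac{G(n,\chi_2)\,G(m,\chi_1)}{m\,(n+xm)^z}.
\end{align*}

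Applying the same formula with $\chi_1,\chi_2$ swapped and $x$ replaced by $1/x$, multiplying through by $x^{1-z}$ and relabelling the summation indices $n\leftrightarrow m$ produces
\begin{align*}
x^{1-z}\,J_{r,\chi_2,\chi_1}(z,1/x)=-\sum_{n=1}^{\infty}\sum_{m=1}^{\infty}\frac{x\,G(n,\chi_2)\,G(m,\chi_1)}{n\,(n+xm)^z}.
\end{align*}
Adding the two identities, the elementary algebraic combination $\frac{1}{m}+\frac{x}{n}=\frac{n+xm}{mn}$ cancels one power of $(n+xm)$ and produces exactly the right-hand side of \eqref{fe-general}. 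The primitive case \eqref{fe-primitive} then follows by applying the separability identity $G(m,\chi)=\overline{\chi}(m)G(1,\chi)$ (with both sides vanishing when $\gcd(m,r)>1$) to each Gauss-sum factor. I do not anticipate a substantive obstacle: the only technical step is the interchange of sums and integral, which is routine given absolute convergence, and everything else is bookkeeping.
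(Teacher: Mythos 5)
Your proposal is correct and is essentially the paper's own argument with the intermediate machinery unrolled: the paper factors the computation through the function $\mathscr{F}_z(x;u,v)$ of \eqref{script F def z}, proving $J_{r,\chi_1,\chi_2}(z,x)=\sum_{j,k}\chi_1(j)\chi_2(k)\mathscr{F}_z(x,\zeta_r^j,\zeta_r^k)$ (where the same kernel identity $\tfrac{1}{t(1-t^r)}\sum_\ell G(\ell,\chi_2)t^\ell=\sum_{n\ge1}G(n,\chi_2)t^{n-1}$ appears, run in the opposite direction) and then invoking the functional equation of Theorem \ref{small theorem}, which rests on exactly your series expansion and the identity $\tfrac1m+\tfrac xn=\tfrac{n+mx}{mn}$. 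Your direct route is valid and equivalent; the only thing you forgo is the standalone statement for $\mathscr{F}_z$, which the paper reuses elsewhere (e.g.\ for \eqref{1=z} and the periodic-function extension).
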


As shown in Section \ref{integral-J}, the integral $J_{r,\chi_1, \chi_2}(z,x)$ is analytic in Re$(z)>0$. Moreover, using Crandall's method, we show that the right-hand side of \eqref{fe-general} can be analytically continued to an entire function of $z$. See Proposition \ref{crandall} and Remark \ref{gauss - analytic continuation}. From the above discussion, we conclude that the analytically continued version of the identity in \eqref{fe-general}, which is stated in Theorem \ref{extended version}, is valid in Re$(z)>0$. 
This has important consequences. Indeed, as shown in Corollary \ref{unnamed corollary}, the functional equations for $J$ and $T$ in \eqref{JT-FE} follow as special cases.

In Theorem \ref{J r chi FE}, one can now let $r$ be \emph{any} natural number, $\chi_1, \chi_2$ be \emph{any} Dirichlet characters modulo $r$, and $z$, with Re$(z)>0$, can be \emph{any} complex number! This gives us an infinite supply of such functional equations. A couple of them are illustrated in Section \ref{ei} whereas some more are given in Table 2.

One can specialize $z$ (such that Re$(z)>1$) as well as $r$ and the characters in \eqref{fe-general}, and see if either of the left- or right-hand sides can be explicitly evaluated to obtain explicit evaluations of series/integrals. For example, we obtain the following identity, which is new to the best of our knowledge:
\begin{align}\label{new series evaluation}
	\sum_{n=1}^{\infty}\sum_{m=1}^{\infty}\frac{\big(e^{\frac{\pi i n}{3}}-e^{\frac{5\pi i n}{3}}\big)\big(e^{\frac{\pi i m}{3}}-e^{\frac{5\pi i m}{3}}\big)}{n m (n+m)}&=\frac{-1}{18}\bigg[3\log^{3}(3)-5\pi^2\log(3)+\pi\sqrt{3}\big\{\psi'\left(\tfrac{1}{6}\right)+\psi'\left(\tfrac{1}{3}\right)-\psi'\left(\tfrac{2}{3}\right)-\psi'\left(\tfrac{5}{6}\right)\big\}\nonumber\\
	&\qquad\quad+\left(18\log(3)-30\pi i\right)\textup{Li}_2\big(\tfrac{3-i\sqrt{3}}{6}\big)+\left(18\log(3)+30\pi i\right)\textup{Li}_2\big(\tfrac{3+i\sqrt{3}}{6}\big)\nonumber\\
	&\qquad\quad-36\left(\textup{Li}_3\big(\tfrac{3-i\sqrt{3}}{6}\big)+\textup{Li}_3\big(\tfrac{3+i\sqrt{3}}{6}\big) \right) \bigg].
\end{align}
Moreover, differentiating \eqref{fe-general} $k$ times with respect to $z$ and then specializing $z$ with Re$(z)>1$ produces further interesting identities. The general identity for $k=1$ and $x=1$ is given in Section \ref{extend}. Two special cases are given below:
\begin{align}\label{loglog2z2}		
	\int_{0}^{1} \frac{\log(\tfrac{1}{y})\log(\log(\tfrac{1}{y}))\log(1+y)}{1+y} \, dy =  \frac{1}{8} (1-\gamma)\zeta(3)
	- \frac{1}{2} \sum_{n=1}^{\infty} \sum_{m=1}^{\infty}  \frac{(-1)^{m+n}\log(m+n)}{mn(n+m)},
\end{align}
whereas if $\chi$ is the non-trivial Dirichlet character modulo $4$, we obtain
\begin{align}\label{loglog4z2}		
	\int_{0}^{1} \frac{\log(\tfrac{1}{y})\log(\log(\tfrac{1}{y}))\tan^{-1}(y)}{1+y^2} \, dy = \frac{(1-\gamma)}{16}\left(4\pi G-7\zeta(3)\right)
	- \frac{1}{2} \sum_{n=0}^{\infty} \sum_{m=0}^{\infty}  \frac{(-1)^{m+n}\log(2m+2n+2)}{(2m+1)(2n+1)(2m+2n+2)},
\end{align}
where $G$ is Catalan's constant. 
Vardi \cite{vardi} (see also \cite[p.~570, formula \textbf{4.325.4}]{gr}) has proved
\begin{align}\label{vardi-integral1}		
	\int_{0}^{1} \frac{\log(\log(\tfrac{1}{y}))}{1+y^2} \, dy =\frac{\pi}{2}\log\left(\frac{\Gamma(3/4)}{\Gamma(1/4)}\sqrt{2\pi}\right),
\end{align}
using techniques from analytic number theory.  Even though the integrals in \eqref{loglog2z2} and \eqref{loglog4z2} are reminiscent to that in \eqref{vardi-integral1}, they are quite different in nature.

We also mention another striking feature of \eqref{fe-primitive} in junction with \eqref{lz chi integral}. Let $x=1$ and $\chi_1=\chi_2=\bar\chi$ in \eqref{fe-primitive}, where $\chi$ is a primitive Dirichlet character modulo $r$. Then we obtain an integral representation for a character analogue of $\Theta(z, 1)$, namely,
\begin{align}\label{int rep char theta}
 \sum_{n=1}^{\infty} \sum_{m=1}^{\infty} \frac{\chi(m) \chi(n)}{m n (n+m)^{z-1}}=\frac{-2}{G(1, \bar\chi)\Gamma(z)} \int_{0}^{1} \frac{\left( \log \left( \frac{1}{t} \right) \right)^{z-1}}{t(1-t^r)} \left( \sum_{\ell=1}^{r}\chi(\ell)t^{\ell}\right) \left(  \sum_{j=1}^{r} \bar\chi(j) \log \left( 1-\zeta_r^j t\right) \right) \, dt.
\end{align}
The special case of \eqref{int rep char theta} where we let $z=1$ is considered in Corollary \ref{from introduction}.

Notice the similarity between the integral representation for $L(z, \chi)$ given in \eqref{lz chi integral} and the one for the character analogue of $\Theta(z, 1)$ given above. However, while \eqref{int rep char theta} admits a generalization in the parameter $x$ in the form of the functional equation given in \eqref{fe-primitive}, equation \eqref{lz chi integral} lacks such extension. 

In this regard, we mention that an integral representation for the left-hand side of \eqref{int rep char theta} given by Bailey and Borwein \cite[Equations (61), (62)]{bailey-borwein} for some specific characters contains two sums of the type $ \sum_{j=1}^{r} \bar\chi(j) \log \left( 1-\zeta_r^j t\right)$ but it differs from the one obtained here in that it does not  contain the Fekete polynomial $\sum_{\ell=1}^{r}\chi(\ell)t^{\ell}$.

	\section{Properties of the generalized Mordell-Tornheim zeta function $\Theta(z, x)$}
		
	\subsection{Towards the Laurent series expansion of $\Theta(z, x)$}\label{properties of MT}
	The principal part and the constant term in the Laurent series expansion of $\Theta(z, x)$ around $z=1$ is obtained here.
	\begin{proof}[Theorem \textup{\ref{principal part}}][]
		For $x, y>0$ and $\textup{Re}(z)>0$ ,
		\begin{align*}
			\int_{0}^{\infty} t^{z-1} e^{-xt-yt} \, dt = (x+y)^{-z} \Gamma(z).
		\end{align*} 
		Replace $y$ by $ny$, divide both sides by $n$ and sum over $n$ from $1$ to $\infty$ to get
		\begin{align*}
			\int_{0}^{\infty} t^{z-1} e^{-xt} \log(1-e^{-yt}) \, dt &= -\Gamma(z) \sum_{n=1}^{\infty} \frac{1}{n(x+ny)^{z}}.
		\end{align*}
		Since
		\begin{align}\label{before summing}
			\int_{0}^{\infty} t^{z-1} e^{-xt} \log(t)  \, dt = x^{-z} \Gamma(z) \psi(z) - x^{-z} \log(x) \Gamma(z),
		\end{align}
		we have
		\begin{align*}
			\int_{0}^{\infty} t^{z-1} e^{-xt} \log\left(\frac{1-e^{-t}}{t} \right) \, dt= -\Gamma(z) \sum_{n=1}^{\infty} \frac{1}{n(x+n)^{z}} -x^{-z} \Gamma(z) \psi(z) + x^{-z} \log(x) \Gamma(z).
		\end{align*} 
		Now replace $x$ by $mx$, divide both sides of the equation by $m$ and sum over $m$ from $1$ to $\infty$ to get for Re$(z)>0$,
		\begin{align*}
			&\frac{1}{\Gamma(z)}\int_{0}^{\infty} t^{z-1} \log\left(1-e^{-xt} \right)  \log\left(\frac{1-e^{-t}}{t} \right) \, dt \\
			&=  \Theta(z+1, x) +x^{-z} \psi(z) \zeta(z+1) +x^{-z} \zeta'(z+1) -x^{-z} \log(x) \zeta(z+1),
		\end{align*}
	where $\Theta(z, x)$ is defined in \eqref{Thetadef}. Now interchange the order of summation (which is valid due to absolute convergence) and then replace $z$ by $z-1$ so that for $\textup{Re}(z)>1$,
\begin{align}\label{befend}
	&\frac{1}{\Gamma(z-1)}\int_{0}^{\infty} t^{z-2} \log\left(1-e^{-xt} \right)  \log\left(\frac{1-e^{-t}}{t} \right) \, dt\nonumber\\ 
	&= \Theta(z, x) +x^{-(z-1)} \psi(z-1) \zeta(z)   +x^{-(z-1)} \zeta'(z) -x^{-(z-1)} \log(x) \zeta(z) . 
\end{align}		
Denote the integral on the left-hand side of \eqref{befend} by $E(z, x)$.  Our next task is show that $E(z, x)$ is uniformly convergent in the region Re$(z)\geq\epsilon$ for every $\epsilon>0$. This follows from the facts that as $t\to\infty$, 
\begin{align*}
\log\left(\frac{1-e^{-t}}{t} \right) =O(\log(t)),\hspace{3mm} t^{z-2} \log\left(1-e^{-xt} \right) =O\left(\frac{t^{-M}}{\log(t)}\right),
\end{align*}
for $M>1$, and as $t\to0$,
\begin{equation}\label{bounds}
	\log\left(\frac{1-e^{-t}}{t}\right)=O(t),\hspace{4mm} \log\left(1-e^{-xt}\right)=O_x\left(t^{-\epsilon/2}\right),
\end{equation}
This also shows that $E(1, x)$ is finite. 
Therefore, letting $z\to1$ in \eqref{befend}, we see that the left-hand side tends to $0$. The proof of \eqref{principal part equation} is then completed upon invoking the  well-known power/Laurent series expansions of the elementary/special functions occurring on the right-hand side of \eqref{befend} as $z\to 1$, and thereby observing that
\begin{align*}
&x^{-(z-1)} \psi(z-1) \zeta(z)+x^{-(z-1)} \zeta'(z) -x^{-(z-1)} \log(x) \zeta(z)\\
&=\frac{-2}{(z-1)^2}+\frac{-2\gamma+\log(x)}{z-1}-\gamma^2+\gamma\log(x)+\frac{\pi^2}{6}+O(|z-1|).
\end{align*}
	\end{proof}

	\subsection{The auxiliary integrals $H_1(z, x)$ and $H_2(z, x)$ and functional equations}\label{three-term Feq of theta}
	
 Two auxiliary integrals $H_1(z, x)$ and $H_2(z, x)$ respectively defined in \eqref{h1} and \eqref{h2} play a significant role 	in the proof of Theorem \ref{theta-3t}. In what follows, we first show how they turn up in our analysis.
	
	Let $x=1$ in \eqref{befend}, then subtract the resulting equation from \eqref{befend}, and the multiply both sides to obtain, for Re$(z)>1$,
		\begin{align}\label{izx}
		I(z, x)&= \frac{1}{(z-1)} \left( \Theta(z, x) +x^{-(z-1)} \psi(z-1) \zeta(z)  +x^{-(z-1)} \zeta'(z)\right. \notag \\
		& \hspace{20mm}\left. -x^{-(z-1)} \log(x) \zeta(z) -\Theta(z, 1)- \psi(z-1) \zeta(z) - \zeta'(z) \right) ,
	\end{align}
	where
	\begin{align}\label{int izx}
		I(z, x):=\frac{1}{\Gamma(z)}\int_{0}^{\infty} t^{z-2} \log\left(\frac{1-e^{-xt}}{1-e^{-t}} \right)  \log\left(\frac{1-e^{-t}}{t} \right) \, dt
	\end{align}
	The integral $I(z, x)$ has nice properties. To see this, use the elementary identity
	$ab=\frac{1}{2}\left\{(a+b)^2-a^2-b^2\right\}$ to get
	\begin{align}\label{i=h1-h2}
		I(z, x) =H_{1}(z, x)-H_{2}(z, x),
	\end{align}
	where
	\begin{align}
		H_1(z, x)&:=\frac{1}{2\Gamma(z)}\int_{0}^{\infty}t^{z-1}\left\{\log^{2}\left(\frac{1-e^{-xt}}{t}\right)-\log^{2}\left(\frac{1-e^{-t}}{t}\right)\right\}\, \frac{dt}{t},\label{h1}\\
		H_2(z, x)&:=\frac{1}{2\Gamma(z)}\int_{0}^{\infty}t^{z-1}\log^{2}\left(\frac{1-e^{-xt}}{1-e^{-t}}\right)\, \frac{dt}{t}.\label{h2}
	\end{align}%
	That the second integral $H_2(z, x)$ converges absolutely for  Re$(z)>1$ can be argued from the fact that
	\begin{equation}\label{log-asy}
	\log\left(\frac{1-e^{-xt}}{1-e^{-t}}\right)=
	\begin{cases}
	O(\log(x)),\hspace{5mm}\text{as}\hspace{1mm}t\to0^{+},\\
	O(e^{-\textup{min}(1, x)t})\hspace{1mm},\text{as}\hspace{1mm}t\to\infty.
	\end{cases}
	\end{equation}
	For $H_1(z, x)$, note that as $t\to0$,
	\begin{equation*}
\log\left(\frac{1-e^{-t}}{t}\right)=O(t),\hspace{4mm} \log\left(\frac{1-e^{-xt}}{t}\right)=O_x(1).
	\end{equation*}
Hence
\begin{equation*}
\log^{2}\left(\frac{1-e^{-xt}}{t}\right)-\log^{2}\left(\frac{1-e^{-t}}{t}\right)=O_x(1)
\end{equation*}
as $t\to0$, so that the convergence at $0$ is secured as long as Re$(z)>1$. Also, as $t\to\infty$,
\begin{align*}
	\log^{2}\left(\frac{1-e^{-xt}}{t}\right)-\log^{2}\left(\frac{1-e^{-t}}{t}\right)&=\log\left(\frac{1-e^{-xt}}{1-e^{-t}}\right)\left(\log(1-e^{-xt})+\log(1-e^{-t})-2\log(t)\right)\\
	&=O\left(e^{-\frac{1}{2}\textup{min}(1, x)t}\right),
\end{align*}
where we used \eqref{log-asy}. This shows that $H_1(z, x)$ also converges absolutely for Re$(z)>1$.
	
Theorem \ref{almost} below gives ``almost'' closed-form evaluations of these integrals $H_1(z, x)$ and $H_2(z, x)$. 
\begin{theorem}\label{almost}
	Let $\Phi(z, x)$ be defined in \eqref{hhf-def}. 
	For $\textup{Re}(z)>1$ and $x>0$,
\begin{align}
		H_1(z, x)&= \frac{x^{1-z}-1}{z-1} (\Phi(z,1)-\zeta(z+1)+\psi(z) \zeta(z) + \zeta'(z)) -\frac{x^{1-z}}{z-1}\zeta(z) \log(x),\label{h1eval}\\
		H_2(z, x)&=\frac{1}{z-1}((x+1)^{1-z}-x^{1-z}-1)\zeta(z+1)\nonumber\\
		&\quad-\frac{1}{\Gamma(z)}\int_{0}^{\infty}t^{z-2}\left\{\textup{Li}_2\left( \frac{e^{-(x+1)t}-e^{-xt}}{1-e^{-xt}} \right)+\textup{Li}_2\left( \frac{e^{-(x+1)t}-e^{-t}}{1-e^{-t}} \right)\right\}dt.\label{h2eval}
	\end{align}
\end{theorem}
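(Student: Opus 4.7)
The plan is to handle the two integrals separately. In both cases, the underlying technique is to expand the logarithms or dilogarithms appearing in the integrand as power series in exponentials, and then integrate termwise using $\int_{0}^{\infty} t^{z-2} e^{-\alpha t}\,dt = \Gamma(z-1)\alpha^{1-z}$, valid for $\textup{Re}(z)>1$ and $\alpha>0$.

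For $H_1(z,x)$, I would first use the algebraic expansion
\begin{align*}
\log^2\!\left(\tfrac{1-e^{-xt}}{t}\right) - \log^2\!\left(\tfrac{1-e^{-t}}{t}\right) = \log^2(1-e^{-xt}) - \log^2(1-e^{-t}) - 2\log(t)\bigl(\log(1-e^{-xt})-\log(1-e^{-t})\bigr).
\end{align*}
Using $\log(1-e^{-yt}) = -\sum_{n\geq 1} e^{-nyt}/n$, squaring and integrating termwise yields $\int_{0}^{\infty} t^{z-2}\log^2(1-e^{-yt})\,dt = \Gamma(z-1) y^{1-z}\Theta(z,1)$, while $\int_{0}^{\infty} t^{z-2}\log(1-e^{-yt})\,dt = -\Gamma(z-1)y^{1-z}\zeta(z)$, and differentiating the latter in $z$ supplies the two $\log(t)$-weighted integrals. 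Assembling the pieces gives a formula for $H_1(z,x)$ in terms of $\Theta(z,1)$, $\zeta(z)$, $\zeta'(z)$ and $\psi(z-1)$. To bring it to the stated form \eqref{h1eval}, I would then invoke the identity
\begin{align*}
\Phi(z,1) = \tfrac{1}{2}\Theta(z,1) + \zeta(z+1) - \tfrac{\zeta(z)}{z-1},
\end{align*}
which is proved (independently of Theorem \ref{decompostion theorem}) by interchanging summation in $\sum_{k\geq 1} H_k/k^z$: grouping by $k=n+m$ in $\Theta(z,1)$ gives $2\sum_k H_{k-1}/k^z$, while Abel summation on $\sum_n \zeta(z,n)/n = \sum_k H_k/k^z$ identifies the same quantity with $\Phi(z,1)+\zeta(z)/(z-1)$. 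A final application of $\psi(z) - \psi(z-1) = 1/(z-1)$ consolidates the digamma terms into the $\psi(z)\zeta(z)$ that appears in \eqref{h1eval}.

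For $H_2(z,x)$, the algebraic heart of the argument is the dilogarithm identity
\begin{align*}
\textup{Li}_2\!\left(-\tfrac{a(1-b)}{1-a}\right) + \textup{Li}_2\!\left(-\tfrac{b(1-a)}{1-b}\right) = \textup{Li}_2(ab) - \textup{Li}_2(a) - \textup{Li}_2(b) - \tfrac{1}{2}\log^2\!\left(\tfrac{1-a}{1-b}\right),
\end{align*}
which I would derive by applying Landen's transformation $\textup{Li}_2(w) = -\textup{Li}_2(w/(w-1)) - \tfrac{1}{2}\log^2(1-w)$ to each dilogarithm on the left, and then invoking Abel's five-term relation to collapse the resulting pair $\textup{Li}_2(a(1-b)/(1-ab)) + \textup{Li}_2(b(1-a)/(1-ab))$. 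Specializing $a=e^{-xt}$ and $b=e^{-t}$, the left-hand side of this identity becomes precisely the sum of dilogarithms appearing in the claimed formula for $H_2(z,x)$. Multiplying by $t^{z-2}$ and integrating over $(0,\infty)$: each term $\textup{Li}_2(e^{-\alpha t})$ evaluates via its series $\sum_{k\geq 1}e^{-\alpha k t}/k^2$ to $\Gamma(z-1)\alpha^{1-z}\zeta(z+1)$ for $\alpha\in\{x+1,x,1\}$, while $\log^2((1-e^{-xt})/(1-e^{-t}))$ contributes $2\Gamma(z)H_2(z,x)$ by the definition \eqref{h2}. Solving algebraically for $H_2(z,x)$ produces \eqref{h2eval}.

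The step I expect to be the main obstacle is the dilogarithm identity underlying $H_2$: a priori there is no reason the asymmetric combination on the left should collapse into such a tidy expression, and recognising that Landen's transformation followed by the five-term relation achieves exactly this is the crucial insight. Once it is in hand, the rest is routine termwise Mellin computation, whose absolute convergence in $\textup{Re}(z)>1$ is ensured by \eqref{log-asy} together with the exponential decay of $\textup{Li}_2(e^{-\alpha t})$ as $t\to\infty$.
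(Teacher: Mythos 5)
Your proposal is correct, and the two halves sit differently relative to the paper's proof. For $H_2$ you have essentially rediscovered the paper's argument: the identity you propose to derive from Landen's transformation plus the five-term relation,
\begin{align*}
\textup{Li}_2\left(\tfrac{-a(1-b)}{1-a}\right)+\textup{Li}_2\left(\tfrac{-b(1-a)}{1-b}\right)=\textup{Li}_2(ab)-\textup{Li}_2(a)-\textup{Li}_2(b)-\tfrac{1}{2}\log^2\left(\tfrac{1-a}{1-b}\right),
\end{align*}
is precisely Hill's formula, which the paper simply quotes from Zagier and Lewin and applies with $\theta=e^{-xt}$, $\phi=e^{-t}$; after that the termwise Mellin evaluation of the three terms $\textup{Li}_2(e^{-\alpha t})$ and the identification of the $\log^2$ term with $2\Gamma(z)H_2(z,x)$ is identical, so you gain nothing beyond a self-contained derivation of the key identity (your sign bookkeeping does close up correctly: writing $U=\log\tfrac{1-a}{1-ab}$, $V=\log\tfrac{1-b}{1-ab}$, the five-term relation contributes $+UV$ and the two Landen steps contribute $-\tfrac12 U^2-\tfrac12 V^2$, giving $-\tfrac12(U-V)^2$ as required). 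For $H_1$ your route is genuinely different. The paper writes the difference of squared logarithms as $\int_1^x\frac{d}{dy}\log^2\left(\frac{1-e^{-yt}}{t}\right)dy$, applies Fubini, and reduces everything to single integrals evaluated via \eqref{zeta zn over n} and \eqref{diff gamma zeta}; you instead expand $\log^2(A/t)-\log^2(B/t)=\log^2 A-\log^2 B-2\log t\,(\log A-\log B)$ (the $\log^2 t$ cancels, and each resulting piece converges separately for $\textup{Re}(z)>1$), compute the Mellin transforms termwise to land on $\frac{x^{1-z}-1}{2(z-1)}\Theta(z,1)$ plus $\Gamma\zeta$-derivative terms, and then convert $\Theta(z,1)$ into $\Phi(z,1)$ via $\Phi(z,1)=\tfrac12\Theta(z,1)+\zeta(z+1)-\tfrac{\zeta(z)}{z-1}$. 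That conversion identity is just the $x=1$ case of Theorem \ref{decompostion theorem}, whose proof in Section \ref{decomp} is an independent series rearrangement, so your worry about circularity is unfounded; still, your harmonic-number proof (grouping $\Theta(z,1)$ by $k=n+m$ to get $2\sum_k H_{k-1}k^{-z}$ and comparing with $\sum_n\zeta(z,n)/n=\sum_k H_k k^{-z}$) is correct and keeps the argument self-contained. Your version is more computational and avoids the Fubini interchange over the rectangle $[1,x]\times(0,\infty)$, at the cost of routing through $\Theta(z,1)$; the paper's version produces $\Phi(z,1)$ directly. Both assemble to \eqref{h1eval} after $\psi(z-1)=\psi(z)-\tfrac{1}{z-1}$.
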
 	
\begin{proof}
	
	First we evaluate $H_1(z,x)$. By the fundamental theorem of calculus, 
	\begin{align*}
	\log^{2}\left(\frac{1-e^{-xt}}{t}\right)-\log^{2}\left(\frac{1-e^{-t}}{t}\right)&=\int_{1}^{x}\frac{d}{dy}\log^{2}\left(\frac{1-e^{-yt}}{t}\right)\, dy\\
	&=2t\int_{1}^{x} \log\left(\frac{1-e^{-yt}}{t}\right) \frac{1}{e^{yt}-1}\, dy .
	\end{align*}
	Hence,
\begin{align}\label{H1-beginning}
	H_1(z,x)&= \frac{1}{\Gamma(z)} \int_{0}^{\infty}  t^{z-1}\int_{1}^{x} \log\left(\frac{1-e^{-yt}}{t}\right) \frac{1}{e^{yt}-1}\, dy ~dt\notag\\
	&= \frac{1}{\Gamma(z)} \int_{1}^{x} \int_{0}^{\infty} t^{z-1} \log\left(\frac{1-e^{-yt}}{t}\right) \frac{1}{e^{yt}-1} \,dt~dy\notag\\
	&= \frac{1}{\Gamma(z)} \int_{1}^{x} y^{-z} \int_{0}^{\infty} t^{z-1} \left(\log(1-e^{-t})+\log\left(\frac{y}{t}\right)\right) \frac{1}{e^{t }-1} \, dt~dy\notag \\
	&= \frac{(x^{1-z}-1)}{\Gamma(z)(1-z)} \int_{0}^{\infty} t^{z-1} \left(\log(1-e^{-t}) -\log(t) \right) \frac{1}{e^{t }-1} \, dt +  \frac{1}{\Gamma(z)} \int_{0}^{\infty} \frac{t^{z-1}\, dt}{e^{t }-1} \int_{1}^{x} y^{-z} \log(y) \, dy,
	\end{align}
where in the second step, we used Fubini's theorem to interchange the order of integration.

Next, from \cite[p.~609, Equation \textbf{25.11.27}]{nist}, for Re$(z)>-1, z\neq1$, and Re$(a)>0$, we have
\begin{align*}
\zeta(z,a)- \frac{a^{1-z}}{z-1} &= \frac{a^{-z}}{2}  + \frac{1}{\Gamma(z)} \int_{0}^{\infty} t^{z-1} e^{-at} \left( \frac{1}{e^t-1} -\frac{1}{t} +\frac{1}{2} \right) \, dt,
\end{align*}
which can be used to prove that\footnote{For $z=1$, this was derived by Zagier \cite[Equation (7.2)]{zagier}.} for Re$(z)>0$,
\begin{align}
	\Phi(z, x)= \frac{-1}{\Gamma(z)} \int_{0}^{\infty} t^{z-1} \left( \frac{1}{1-e^{-t}} -\frac{1}{t} \right) \log(1-e^{-xt}) \, dt ,\label{almost integral3}
\end{align}
where $\Phi(z, x)$ is defined in \eqref{hhf-def}. Now take $x=1$ and   Re$(z)>1$ in \eqref{almost integral3} so as to get upon simplification
\begin{equation}\label{zeta zn over n}
	\int_{0}^{\infty}\frac{t^{z-1}\log(1-e^{-t})}{e^t-1}\, dt=-\Gamma(z)\sum_{n=1}^{\infty}\frac{\zeta(z, n)}{n}+\Gamma(z)\zeta(z+1).
\end{equation}
Replace $x$ by $n$ in \eqref{before summing} and sum over $n$ from $1$ to $\infty$ to get
\begin{equation}\label{diff gamma zeta}
	\int_{0}^{\infty}\frac{t^{z-1}\log(t)}{e^t-1}\, dt=\Gamma(z)\left(\psi(z)\zeta(z)+\zeta'(z)\right).
\end{equation}
Moreover,
\begin{equation}\label{indefinite}
\int_{1}^{x} y^{-z} \log(y) \, dy=\frac{x^{-z}}{(z-1)^2}\left(x^{z}-x+x(1-z)\log(x)\right),
\end{equation}
Hence, from \eqref{H1-beginning}, \eqref{zeta zn over n}, \eqref{diff gamma zeta}, and \eqref{indefinite}, we arrive at \eqref{h1eval} since 
\begin{align*}
	\sum_{n=1}^{\infty} \frac{\zeta(z, n)}{n}= \Phi(z,1)+\frac{\zeta(z)}{z-1}.
\end{align*}

To prove \eqref{h2eval}, we begin with Hill's formula for the dilogarithm whch is given for $0<\theta,\phi<1$ by \cite[p.~176]{zagier} (see, also, \cite[p.~9, Equation (1.25)]{lewin})
	\begin{align*}
		\frac{1}{2} \log^2\left( \frac{1-\theta}{1-\phi} \right)= \textup{Li}_2 (\theta \phi) -\textup{Li}_2(\theta) -\textup{Li}_2(\phi) -\textup{Li}_2\left( \frac{\theta\phi-\theta}{1-\theta} \right)- \textup{Li}_2\left( \frac{\theta\phi-\phi}{1-\phi} \right).
	\end{align*}
	Invoke this formula with $\theta=e^{-xt}$ and $\phi=e^{-t}$, where $x>0$ and $t>0$, so that
	\begin{align*}
		H_2(z, x)&=\frac{1}{\Gamma(z)}\int_{0}^{\infty}t^{z-2}\bigg\{\textup{Li}_2 (e^{-(x+1)t}) -\textup{Li}_2(e^{-xt}) -\textup{Li}_2(e^{-t})\nonumber\\ &\qquad\qquad\qquad\qquad-\textup{Li}_2\left( \frac{e^{-(x+1)t}-e^{-xt}}{1-e^{-xt}} \right)- \textup{Li}_2\left( \frac{e^{-(x+1)t}-e^{-t}}{1-e^{-t}} \right)\bigg\}\, dt.
	\end{align*}
	For each of the first three instances of dilogarithm, one can use its series definition from \eqref{li2z}, interchange the order of summation and integration to have upon simplification,
	\begin{align*}
		\frac{1}{\Gamma(z)}\int_{0}^{\infty}t^{z-2}\left(\textup{Li}_2 (e^{-(x+1)t}) -\textup{Li}_2(e^{-xt}) -\textup{Li}_2(e^{-t})\right)\, dt=\frac{1}{z-1}((x+1)^{1-z}-x^{1-z}-1)\zeta(z+1).
	\end{align*}
	This establishes the result.
\end{proof}

Next, we obtain two- and three-term functional equations for $H_1(z, x)$ and $H_2(z, x)$. 
	\begin{theorem}
	For $\textup{Re}(z)>1$ and $x>0$,
	\begin{align}
		H_1(z, x)&+x^{1-z}H_1\left(z,\frac{1}{x}\right)=\frac{1}{z-1}(1-x^{1-z})\log(x)\zeta(z),\label{2 term H1}\\
		H_1(z, x)&-	H_1(z, x+1)-(x+1)^{1-z}	H_1\left(z, \frac{x}{x+1}\right)
		=\frac{1}{z-1}((x+1)^{1-z}-x^{1-z})\log(x+1)\zeta(z).\label{3 term H1}
	\end{align}
\end{theorem}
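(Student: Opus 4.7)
The plan is to read both functional equations directly off the explicit evaluation \eqref{h1eval} of $H_1$, exploiting a clean cancellation. Writing $C(z):=\Phi(z,1)-\zeta(z+1)+\psi(z)\zeta(z)+\zeta'(z)$ for the $x$-independent quantity occurring in Theorem \ref{almost}, equation \eqref{h1eval} takes the form
$$
H_1(z,x)=A(x)\,C(z)-B(x)\,\zeta(z)\log(x),\qquad A(x):=\frac{x^{1-z}-1}{z-1},\;\; B(x):=\frac{x^{1-z}}{z-1}.
$$
Because the dependence of $H_1(z,x)$ on $x$ is encoded in just the two elementary weights $A(x)$ and $B(x)$, the two-term and three-term identities reduce to routine algebraic identities for $A$ and $B$.

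For the two-term relation \eqref{2 term H1}, I would first compute the $C(z)$-coefficient of $H_1(z,x)+x^{1-z}H_1(z,1/x)$, which is $A(x)+x^{1-z}A(1/x)=\frac{(x^{1-z}-1)+(1-x^{1-z})}{z-1}=0$. Thus the indigestible constant $C(z)$ drops out. The remaining contribution is
$$
-B(x)\log(x)\zeta(z)-x^{1-z}B(1/x)\log(1/x)\zeta(z)=\frac{1-x^{1-z}}{z-1}\log(x)\zeta(z),
$$
which is exactly the right-hand side of \eqref{2 term H1}.

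For the three-term relation \eqref{3 term H1}, the key algebraic input is $(x+1)^{1-z}\bigl(x/(x+1)\bigr)^{1-z}=x^{1-z}$, so
$(x+1)^{1-z}A\bigl(x/(x+1)\bigr)=\frac{x^{1-z}-(x+1)^{1-z}}{z-1}$ and the $C(z)$-coefficient of $H_1(z,x)-H_1(z,x+1)-(x+1)^{1-z}H_1\bigl(z,x/(x+1)\bigr)$ telescopes to $0$. Splitting $\log(x/(x+1))=\log(x)-\log(x+1)$ and collecting the $\zeta(z)\log$-terms, the $\log(x)$-pieces cancel and what survives is
$$
\frac{(x+1)^{1-z}-x^{1-z}}{z-1}\log(x+1)\,\zeta(z),
$$
which is the right-hand side of \eqref{3 term H1}.

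There is no genuine obstacle here; once Theorem \ref{almost} is in hand, the whole content of the two- and three-term functional equations for $H_1$ is the fact that the $x$-dependent weight $A(x)=(x^{1-z}-1)/(z-1)$ satisfies the two- and three-term relations with vanishing right-hand side, while $B(x)\log(x)$ supplies the non-trivial right-hand sides. The only care required is bookkeeping of signs and correctly using $(x/(x+1))^{1-z}=x^{1-z}(x+1)^{z-1}$ in the three-term computation.
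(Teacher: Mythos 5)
Your proof is correct, and it takes a genuinely different route from the paper's. You derive both functional equations purely algebraically from the closed-form evaluation \eqref{h1eval} of Theorem \ref{almost}: writing $H_1(z,x)=A(x)C(z)-B(x)\zeta(z)\log(x)$ with $A(x)=(x^{1-z}-1)/(z-1)$ and $B(x)=x^{1-z}/(z-1)$, you check that $A$ satisfies the homogeneous two- and three-term relations (the key inputs being $x^{1-z}\cdot(1/x)^{1-z}=1$ and $(x+1)^{1-z}\bigl(x/(x+1)\bigr)^{1-z}=x^{1-z}$, both of which kill the coefficient of $C(z)$), and that the $B(x)\log(x)$ piece produces exactly the stated right-hand sides; I have verified both computations. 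The paper instead works directly with the integral representation \eqref{h1}: for \eqref{2 term H1} it substitutes $x\mapsto 1/x$, changes variables $t=xu$, expands the difference of squared logarithms to isolate a cross term $2\log(x)\log\bigl((1-e^{-xu})/(1-e^{-u})\bigr)$, and evaluates its Mellin transform termwise to get the $\zeta(z)$ contribution; the three-term relation is obtained similarly and then combined with the two-term one. Since Theorem \ref{almost} is proved before this theorem and its proof does not use the functional equations, your argument is not circular, and it is shorter and more transparent about \emph{why} the relations hold (the only $x$-dependence of $H_1$ sits in two elementary weights). What the paper's integral-level proof buys is independence from the evaluation \eqref{h1eval} — in particular it does not require introducing the constant $\Phi(z,1)$ at all — and it runs in parallel with the treatment of $H_2(z,x)$, for which no analogous clean closed form is available.
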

\begin{proof}
Replace $x$ by $1/x$ in the definition of $H_1(z, x)$ in \eqref{h1} and then employ the change of variable $t=xu$ so that
\begin{align*}
H_1\left(z, \frac{1}{x}\right)&=\frac{x^{z-1}}{2\Gamma(z)}\int_{0}^{\infty}u^{z-2}\left\{\log^{2}\left(\frac{1-e^{-u}}{xu}\right)-\log^{2}\left(\frac{1-e^{-xu}}{xu}\right)\right\}\, du\nonumber\\
&=\frac{x^{z-1}}{2\Gamma(z)}\int_{0}^{\infty}u^{z-2}\left\{\log^{2}\left(\frac{1-e^{-u}}{u}\right)-\log^{2}\left(\frac{1-e^{-xu}}{u}\right)+2\log(x)\log\left(\frac{1-e^{-xt}}{1-e^{-t}}\right)\right\}\, du\nonumber\\
&=-x^{z-1}H_1(z, x)-\frac{1}{z-1}x^{z-1}\left(x^{1-z}-1\right)\log(x)\zeta(z),
\end{align*}
where the second expression on the extreme right side above results from expanding $\log(1-e^{-xt})$ (or $\log(1-e^{-t})$) as a power series, interchanging the order of summation and integration and then writing the integral as $(nx)^{1-z}\Gamma(z-1)$ (or $n^{1-z}\Gamma(z-1)$). This results in \eqref{2 term H1}.

To prove \eqref{3 term H1}, observe that using the definition of $H_1(z, x)$ in \eqref{h1} three times, we have
\begin{align*}
&H_1(z, x)-	H_1(z, x+1)+x^{1-z}	H_1\left(z, \frac{x+1}{x}\right)\nonumber\\
&=\frac{1}{2\Gamma(z)}\int_{0}^{\infty}t^{z-2}\left\{\log^{2}\left(\frac{1-e^{-xt}}{t}\right)-\log^{2}\left(\frac{1-e^{-(x+1)t}}{t}\right)\right\}\, dt\nonumber\\
&\quad+\frac{x^{1-z}}{2\Gamma(z)}\int_{0}^{\infty}t^{z-2}\left\{\log^{2}\left(\frac{1-e^{-\left(\frac{x+1}{x}\right)t}}{t}\right)-\log^{2}\left(\frac{1-e^{-t}}{t}\right)\right\}\, dt. 
\end{align*}
Employing the change of variable $t=xu$ in the second integral, we see that
\begin{align}\label{to combine}
&\frac{x^{1-z}}{2\Gamma(z)}\int_{0}^{\infty}t^{z-2}\left\{\log^{2}\left(\frac{1-e^{-\left(\frac{x+1}{x}\right)t}}{t}\right)-\log^{2}\left(\frac{1-e^{-t}}{t}\right)\right\}\nonumber\\
&=\frac{1}{2\Gamma(z)}\int_{0}^{\infty}u^{z-2}\left\{\log^{2}\left(\frac{1-e^{-(x+1)u}}{xu}\right)-\log^{2}\left(\frac{1-e^{-xu}}{xu}\right)\right\}\, du
\end{align}
But
\begin{align}\label{to subs}
\log^{2}\left(\frac{1-e^{-(x+1)u}}{xu}\right)-\log^{2}\left(\frac{1-e^{-xu}}{xu}\right)&=\log^{2}\left(\frac{1-e^{-(x+1)u}}{u}\right)-\log^{2}\left(\frac{1-e^{-xu}}{u}\right)\nonumber\\
&\quad-2\log(x)\log\left(\frac{1-e^{(x+1)u}}{1-e^{-xu}}\right).
\end{align}
Substituting \eqref{to subs} in \eqref{to combine} and then putting the resulting expression in \eqref{3 term beginning}, we find that
\begin{align}\label{3 term beginning}
	H_1(z, x)-	H_1(z, x+1)+x^{1-z}	H_1\left(z, \frac{x+1}{x}\right)
	&=-\frac{\log(x)}{\Gamma(z)}\int_{0}^{\infty}u^{z-2}\log\left(\frac{1-e^{(x+1)u}}{1-e^{-xu}}\right)\, du\nonumber\\
	&=\frac{1}{z-1}\log(x)\left((x+1)^{1-z}-x^{1-z}\right)\zeta(z).
\end{align}
In order to obtain \eqref{3 term H1}, we use \eqref{2 term H1} with $x$ replaced by $\frac{x+1}{x}$ so that
\begin{align*}
H_1\left(z, \frac{x+1}{x}\right)=-\left(\frac{x+1}{x}\right)^{1-z}H_1\left(z, \frac{x}{x+1}\right)+\frac{1}{z-1}\log\left(\frac{x+1}{x}\right)\left(1-\left(\frac{x+1}{x}\right)^{1-z}\right)\zeta(z),
\end{align*}
and substitute this expression in \eqref{3 term beginning}. This results in \eqref{3 term H1} after simplification.
\end{proof}
\begin{theorem}
	For $\textup{Re}(z)>1$ and $x>0$,
	\begin{align}
		&H_2(z, x)=x^{1-z}H_2\left(z,\frac{1}{x}\right),\label{H2 2 term} \\
		&H_2(z, x)-	H_2(z, x+1)-(x+1)^{1-z}	H_2\left(z, \frac{x}{x+1}\right)\nonumber\\
		&=\frac{1}{z-1}((x+1)^{1-z}-x^{1-z}-1)\zeta(z+1)\nonumber\\
		&\quad+\frac{1}{\Gamma(z)}\int_{0}^{\infty}t^{z-2}\left\{\textup{Li}_{2}\left(\frac{e^{-(x+1)t}-e^{-xt}}{e^{-(x+1)t}-1}\right)+\textup{Li}_{2}\left(\frac{e^{-(x+1)t}-e^{-t}}{e^{-(x+1)t}-1}\right)\right\}\, dt.\label{H2 3 term}
	\end{align}
\end{theorem}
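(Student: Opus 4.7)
The two identities will be proved by quite different routes: \eqref{H2 2 term} follows from a direct change of variable in the definition \eqref{h2} of $H_2$, while \eqref{H2 3 term} will be extracted by combining the three-term functional equation of $\Theta(z,x)$ (Theorem \ref{theta-3t}) with the three-term equation for $H_1(z,x)$ already established in \eqref{3 term H1}, via the bridge $H_2=H_1-I$ from \eqref{i=h1-h2} and the closed form \eqref{izx} for $I(z,x)$.

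For \eqref{H2 2 term}, I would substitute $x\mapsto 1/x$ in \eqref{h2} and then make the change of variable $t=xu$; the Jacobian produces a factor $x^{z-1}$, and since $\log^{2}((1-e^{-u})/(1-e^{-xu}))=\log^{2}((1-e^{-xu})/(1-e^{-u}))$, the integrand collapses into that of $H_2(z,x)$, yielding the stated identity immediately.

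For \eqref{H2 3 term}, write $\Delta f := f(x)-f(x+1)-(x+1)^{1-z}f(x/(x+1))$ for the three-term difference operator. From \eqref{i=h1-h2} we have $\Delta H_2=\Delta H_1-\Delta I$, and \eqref{3 term H1} gives $\Delta H_1=\tfrac{1}{z-1}((x+1)^{1-z}-x^{1-z})\log(x+1)\zeta(z)$. To compute $\Delta I$, I would use \eqref{izx} to decompose $I(z,x)$ into five pieces and apply $\Delta$ to each: for the $\Theta(z,x)$-piece I invoke Theorem \ref{theta-3t}, and for the remaining pieces I use the elementary identities $\Delta(x^{1-z})=-(x+1)^{1-z}$, $\Delta(x^{1-z}\log x)=(x^{1-z}-(x+1)^{1-z})\log(x+1)$, and $\Delta(1)=-(x+1)^{1-z}$, which follow at once from $(x+1)^{1-z}(x/(x+1))^{1-z}=x^{1-z}$.

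Assembling the pieces, the $A(z):=\psi(z-1)\zeta(z)+\zeta'(z)$ contributions from the second and fifth pieces of \eqref{izx} cancel internally, the $\Theta(z,1)$-contribution produced by $\Delta I$ cancels against the $-(x+1)^{1-z}\Theta(z,1)$ term from Theorem \ref{theta-3t}, and finally the surviving log-type term in $\Delta I$ exactly cancels $\Delta H_1$. The identity $\tfrac{1}{z-1}\cdot\tfrac{1}{\Gamma(z-1)}=\tfrac{1}{\Gamma(z)}$ converts the Mellin integral of Theorem \ref{theta-3t} into the one appearing in \eqref{H2 3 term}, and the zeta-value term reduces to $\tfrac{1}{z-1}((x+1)^{1-z}-x^{1-z}-1)\zeta(z+1)$, which is precisely the claimed right-hand side. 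The main obstacle is simply careful bookkeeping of the four cancellations (involving $A(z)$, $\Theta(z,1)$, the two log terms, and the Gamma-function prefactor); no further analytic input is required beyond results already proved in the paper.
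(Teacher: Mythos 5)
Your proof of the two-term relation \eqref{H2 2 term} is correct and coincides with the paper's argument (substitute $x\mapsto 1/x$ in \eqref{h2}, change variables $t=xu$, and use the symmetry of the squared logarithm).

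Your proof of the three-term relation \eqref{H2 3 term}, however, is circular within the logical structure of the paper. You take Theorem \ref{theta-3t} as an input, but the paper's only proof of Theorem \ref{theta-3t} consists precisely of combining \eqref{3 term H1} with \eqref{H2 3 term} through the decomposition $I=H_1-H_2$ of \eqref{i=h1-h2} and the relation \eqref{izx} between $I(z,x)$ and $\Theta(z,x)$; that is, Theorem \ref{theta-3t} is a consequence of the statement you are trying to prove, not an independently established fact. Your computation is algebraically sound -- the cancellations of $\psi(z-1)\zeta(z)+\zeta'(z)$, of $\Theta(z,1)$, and of the two $\log(x+1)$ terms all check out, and $(z-1)\Gamma(z-1)=\Gamma(z)$ does convert the prefactor correctly -- but it is in effect the paper's proof of Theorem \ref{theta-3t} run backwards. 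To prove \eqref{H2 3 term} non-circularly one needs an independent handle on $H_2$: the paper obtains it from the closed-form evaluation \eqref{h2eval} of $H_2(z,x)$ in Theorem \ref{almost} (itself a consequence of Hill's five-term formula for the dilogarithm), to which it applies the Landen-type identity \eqref{to be also referred} with $w=(e^{-(x+1)t}-e^{-xt})/(1-e^{-xt})$ and $w=(e^{-(x+1)t}-e^{-t})/(1-e^{-t})$; the resulting $\log^2$ integrals are then recognized as $H_2(z,x+1)$ and $x^{1-z}H_2\left(z,\frac{x+1}{x}\right)$, and the already-proved two-term relation \eqref{H2 2 term} finishes the argument. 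Unless you supply an independent proof of Theorem \ref{theta-3t}, your derivation of \eqref{H2 3 term} does not stand.
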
	
\begin{proof}
The two-term functional equation for $H_2(z, x)$, that is \eqref{H2 2 term}, follows readily upon letting $x$ by $1/x$ in \eqref{h2} and employing the change of variable $t=xu$ in the resulting equation.

Next, we prove \eqref{H2 3 term}. Using \cite[Equation (1.12)]{lewin},
\begin{align}\label{to be also referred}
\textup{Li}_2(w)+\textup{Li}_2\left(\frac{-w}{1-w}\right)=-\frac{1}{2}\log^{2}(1-w)\hspace{5mm}(w<1).
\end{align}
twice, once with $w=\displaystyle\frac{e^{-(x+1)t}-e^{-xt}}{1-e^{-xt}}$, and again, with $w=\displaystyle\frac{e^{-(x+1)t}-e^{-t}}{1-e^{-t}} $, we see that
\begin{align}
	\textup{Li}_2\left( \frac{e^{-(x+1)t}-e^{-xt}}{1-e^{-xt}} \right)+ \textup{Li}_2\left( \frac{e^{-(x+1)t}-e^{-t}}{1-e^{-t}} \right) \notag  &=-\frac{1}{2} \left( \log^2 \left( \frac{1-e^{-(x+1)t}}{1-e^{-xt}} \right)+\log^2 \left( \frac{1-e^{-(x+1)t}}{1-e^{-t}} \right)  \right)\\
	&\quad-  \textup{Li}_2\left( \frac{e^{-(x+1)t}-e^{-t}}{e^{-(x+1)t}-1} \right)- \textup{Li}_2\left( \frac{e^{-(x+1)t}-e^{-xt}}{e^{-(x+1)t}-1} \right). \label{Li formula for 3 term}
\end{align}
Thus, from \eqref{h2eval} and \eqref{Li formula for 3 term}, 
\begin{align}\label{almost there}
H_2(z, x)&=\frac{1}{z-1}((x+1)^{1-z}-x^{1-z}-1)\zeta(z+1)\nonumber\\
&\quad+\frac{1}{2\Gamma(z)}\int_{0}^{\infty}t^{z-2}\log^2 \left( \frac{1-e^{-(x+1)t}}{1-e^{-xt}} \right)\, dt+\frac{1}{2\Gamma(z)}\int_{0}^{\infty}t^{z-2}\log^2 \left( \frac{1-e^{-(x+1)t}}{1-e^{-t}} \right) \, dt\nonumber\\
&\quad+\int_{0}^{\infty}\frac{t^{z-2}}{\Gamma(z)}\left\{\textup{Li}_2\left( \frac{e^{-(x+1)t}-e^{-t}}{e^{-(x+1)t}-1} \right)+ \textup{Li}_2\left( \frac{e^{-(x+1)t}-e^{-xt}}{e^{-(x+1)t}-1} \right)\right\}\, dt
\end{align}
Now
\begin{align}
	\frac{1}{2\Gamma(z)}\int_{0}^{\infty}t^{z-2}\log^2 \left( \frac{1-e^{-(x+1)t}}{1-e^{-xt}} \right)\, dt &=\frac{x^{1-z}}{2\Gamma(z)}	\int_{0}^{\infty}t^{z-2}\log^2 \left( \frac{1-e^{-\left(\frac{x+1}{x}\right)t}}{1-e^{-t}} \right)\, dt
	=x^{1-z}H_2\left(\frac{x+1}{x}\right),\label{there}\\
	\frac{1}{2\Gamma(z)}\int_{0}^{\infty}t^{z-2}\log^2 \left( \frac{1-e^{-(x+1)t}}{1-e^{-t}} \right)\, dt &=H_2(x+1).	\label{there1}
\end{align}	
Thus from \eqref{almost there}, \eqref{there}, \eqref{there1} and the two-term equation $H_2\left(z,\frac{x+1}{x}\right)=\left(\frac{x+1}{x}\right)^{1-z}H_2\left(z,\frac{x}{x+1}\right)$, resulting from \eqref{H2 2 term}, we arrive at \eqref{H2 3 term}. 
\end{proof}
We are now ready to prove the three-term functional equation for $\Theta(z, x)$. It is only because of the machinery developed so far that the proof appears to be short and straightforward.

	\begin{proof}[Theorem \textup{\ref{theta-3t}}][]
	From \eqref{i=h1-h2}, \eqref{3 term H1} and \eqref{H2 3 term}, it is clear that $I(z, x)$ also satisfies a three-term functional equation. This fact along with the relation between $I(z, x)$ and $\Theta(z, x)$ given in \eqref{izx} implies that $\Theta(z, x)$ also satisfies a three-term functional equation, which, after simplification and a lot of cancellation, results in \eqref{theta-3t eqn}. 
	\end{proof}
	\section{The Herglotz-Hurwitz function $\Phi(z, x)$ and the Ishibashi function $\Phi_k(x)$}\label{hhf}
	
	Vlasenko and Zagier \cite{vz} considered the higher Herglotz functions 	
	\begin{equation*}
		F_k(x):=\sum_{n=1}^{\infty}\frac{\psi(nx)}{n^{k}}\hspace{5mm}\left(k\in\mathbb{N},k>1, x\in\mathbb{C}\backslash(-\infty,0]\right),
	\end{equation*}
	and obtained \cite[Equations (11), (12)]{vz} their two- and three-term functional equations, although their two-term relation was derived earlier by Maier \cite[p.~114, Equation (3)]{maier}. There are numerous other generalizations of $F(x)$. See  \cite{hhf1} for more details. 
	
	In what follows, we study a new generalization of the Herglotz-Zagier function $F(x)$ which is intimately connected with our generalization of the Mordell-Tornheim zeta function, namely, $\Theta(z, x)$.
	\subsection{The decomposition theorem for $\Theta(z, x)$: Two-term relation for $\Phi(z, x)$}\label{decomp}

	\begin{proof}[Theorem \textup{\ref{decompostion theorem}}][]
	Recalling the definition of $\Phi(z, x)$ from \eqref{hhf-def}, we see that for Re$(z)>1$,
	\begin{align*}
	\Phi(z, x)+x^{1-z}\Phi\left(z, \frac{1}{x}\right)&=\sum_{m=1}^{\infty} \frac{1}{m}\left( \zeta(z,mx)- \frac{(mx)^{1-z}}{z-1} \right)	+x^{1-z}\sum_{n=1}^{\infty} \frac{1}{n}\left( \zeta\left(z,\frac{n}{x}\right)- \frac{(\frac{n}{x})^{1-z}}{z-1} \right)	\\	
	&=\sum_{m=1}^{\infty} \sum_{n=0}^{\infty}\frac{1}{m(n+mx)^z}-\frac{x^{1-z}}{z-1}\zeta(z)+\sum_{n=1}^{\infty} \sum_{m=0}^{\infty}\frac{x^{1-z}}{n\left(m+\frac{n}{x}\right)^z}-\frac{\zeta(z) }{z-1}\\
&=\sum_{m=1}^{\infty} \sum_{n=1}^{\infty}\frac{1}{m(n+mx)^z}+\sum_{n=1}^{\infty} \sum_{m=1}^{\infty}\frac{x}{n\left(n+mx\right)^z}\\
&\quad-\frac{(1+x^{1-z})}{z-1}\zeta(z)+(x+x^{-z})\zeta(z+1),
\end{align*}
which leads to the right-hand side of \eqref{decomposition} upon interchanging the order of summation in the first double sum (which is valid due to absolute convergence) and then simplifying.
	\end{proof}

	\subsection{Two-term functional equation for $\Phi_k(x)$}\label{2fe-ishibashi}
An auxiliary polynomial important in the study of $\Phi_k(x)$ was studied by  Ishibashi \cite[Theorem 1]{ishibashi}. This polynomial in $\log(t)$ is defined by $S_k(t):=\sum_{j=0}^{k-1}a_{k, j}\log^{j}(t)$, where $a_{k, j}$ are recursively defined by
\begin{equation*}
	a_{k, j}=-\sum_{r=0}^{k-2}\binom{k-1}{r}\Gamma^{(k-r-1)}(1)a_{r+1, j}\hspace{5mm} (0\leq j\leq k-2),
\end{equation*}
and $a_{1,0}=1, a_{k, k-1}=1$. We call  $S_{k}(t)$, the \emph{$k^{\textup{th}}$ Ishibashi polynomial}. Observe that $S_1(t)=1$ and $S_2(t)=\g+\log(t)$.

In what follows, we derive a lemma needed for obtaining the two-term functional equation for $\Phi_k(x)$.

\begin{lemma}\label{Lemma coeff}
	\textup{(i)}	For $k,j \in \mathbb{N}$ such that $k-j \geq 1$, we have
	\begin{align}\label{ab}
		a_{k,j}= \binom{k-1}{j} \lim_{z \to 1} \frac{d^{k-j-1}}{dz^{k-j-1}} \left( \frac{1}{\Gamma(z)} \right).
	\end{align}
	\hspace{2.2cm}\textup{(ii)} 	For $k \in \mathbb{N}\cup \left\{0\right\}$, 
	\vspace{-5mm}
	\begin{align*}
		S_{k+1}(t)= \lim_{z \to 1} \frac{d^k}{dz^k} \left( \frac{t^{z-1}}{\Gamma(z)} \right).
	\end{align*}
\end{lemma}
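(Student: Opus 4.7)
The plan is to prove part (i) first by strong induction on $k$ using the defining recursion for $a_{k,j}$, and then to deduce part (ii) directly via the Leibniz rule. Abbreviate $c_n := \lim_{z\to 1}\frac{d^n}{dz^n}\bigl(\tfrac{1}{\Gamma(z)}\bigr)$, so that part (i) asserts $a_{k,j} = \binom{k-1}{j}\,c_{k-j-1}$ and part (ii) asserts $S_{k+1}(t) = \sum_{r=0}^{k}\binom{k}{r}c_{k-r}\log^r(t)$.

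The first step is to derive a recursion for $c_n$ itself. Differentiating the identity $\Gamma(z)\cdot\frac{1}{\Gamma(z)}=1$ exactly $K$ times via Leibniz and evaluating at $z=1$ yields
\[
\sum_{r=0}^{K}\binom{K}{r}\Gamma^{(K-r)}(1)\,c_r = 0,
\]
which, after isolating the $r=K$ term (using $\Gamma(1)=1$), gives
\[
c_K = -\sum_{m=0}^{K-1}\binom{K}{m}\Gamma^{(K-m)}(1)\,c_m.
\]
This is the engine that will drive everything.

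For part (i), the base cases are immediate: $a_{1,0}=1=\binom{0}{0}c_0$, and for any $k$ the top coefficient $a_{k,k-1}=1$ matches $\binom{k-1}{k-1}c_0=1$. For $0\le j\le k-2$, assume the formula for all smaller $k$ and substitute $a_{r+1,j}=\binom{r}{j}c_{r-j}$ into the defining recursion. The terms with $r<j$ vanish (since $\binom{r}{j}=0$), so after the change of variable $m=r-j$ one obtains
\[
a_{k,j} = -\sum_{m=0}^{k-j-2}\binom{k-1}{m+j}\binom{m+j}{j}\Gamma^{(k-m-j-1)}(1)\,c_m.
\]
Applying the trinomial identity $\binom{k-1}{m+j}\binom{m+j}{j}=\binom{k-1}{j}\binom{k-1-j}{m}$ factors out $\binom{k-1}{j}$, and setting $K:=k-j-1$ turns the remaining sum into precisely $-c_K$ by the recursion above. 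Hence $a_{k,j}=\binom{k-1}{j}c_{k-j-1}$, completing the induction.

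For part (ii), apply the Leibniz rule to $t^{z-1}/\Gamma(z)$ and use $\frac{d^r}{dz^r}t^{z-1}=t^{z-1}\log^r(t)$:
\[
\frac{d^k}{dz^k}\!\left(\frac{t^{z-1}}{\Gamma(z)}\right) = \sum_{r=0}^{k}\binom{k}{r}t^{z-1}\log^r(t)\,\frac{d^{k-r}}{dz^{k-r}}\!\left(\frac{1}{\Gamma(z)}\right).
\]
Taking $z\to 1$ collapses $t^{z-1}$ to $1$ and produces $\sum_{r=0}^{k}\binom{k}{r}c_{k-r}\log^r(t)$. By part (i) applied with $k\rightsquigarrow k+1$ and $j\rightsquigarrow r$, the coefficient $\binom{k}{r}c_{k-r}$ is exactly $a_{k+1,r}$, so the sum is $\sum_{r=0}^{k}a_{k+1,r}\log^r(t)=S_{k+1}(t)$, as required. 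The main obstacle is the combinatorial collapse in part (i): recognizing that substituting the conjectured formula into the recursion for $a_{k,j}$ reduces — after the binomial identity and reindexing — to the recursion for $c_n$ extracted from $\Gamma(z)\cdot 1/\Gamma(z)=1$.
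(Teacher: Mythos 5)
Your proof is correct and takes essentially the same route as the paper: both arguments verify that the claimed formula satisfies the defining recursion for $a_{k,j}$ by reducing the sum, via the trinomial identity $\binom{k-1}{m+j}\binom{m+j}{j}=\binom{k-1}{j}\binom{k-1-j}{m}$, to the Leibniz expansion of the derivatives of $\Gamma(z)\cdot\frac{1}{\Gamma(z)}=1$, and then deduce part (ii) by Leibniz on $t^{z-1}/\Gamma(z)$. The only cosmetic difference is that you isolate the recursion for $c_n=\lim_{z\to1}\frac{d^n}{dz^n}(1/\Gamma(z))$ as a separate step, whereas the paper absorbs it into a single verification that the candidate $b_{k,j}$ satisfies the recursion.
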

\begin{proof}
	Let $b_{k,j}$ denote the right-hand side of \eqref{ab}. Clearly $b_{1,0}=b_{k,k-1}=1$ for any $k \in \mathbb{N}.$\\
	For $0\leq j\leq k-1$, consider
	\begin{align*}
		&-\sum_{r=0}^{k-1}\binom{k}{r}\Gamma^{(k-r)}(1)b_{r+1, j}= -\sum_{r=j}^{k-1}\binom{k}{r}\binom{r}{j} \lim_{z \to 1} \frac{d^{k-r}}{dz^{k-r}} \left( \Gamma(z) \right)  \lim_{z \to 1} \frac{d^{r-j}}{dz^{r-j}} \left( \frac{1}{\Gamma(z)} \right)
	\end{align*}
	since $\binom{r}{j}=0$ for $r<j$. Now replace $r$ by $r+j$ and then separate the $r=0$ term to get
	\begin{align*}
		-\sum_{r=0}^{k-1}\binom{k}{r}\Gamma^{(k-r)}(1)b_{r+1, j}
		&=b_{k+1,j} - \frac{k!}{j! (k-j)!} \sum_{r=0}^{k-j} \frac{(k-j)!}{r! (k-j-r)!} \lim_{z \to 1} \frac{d^{k-j-r}}{dz^{k-j-r}} \left( \Gamma(z) \right)  \lim_{z \to 1} \frac{d^{r}}{dz^{r}} \left( \frac{1}{\Gamma(z)} \right)\\
		&=b_{k+1,j} - \binom{k}{j}  \sum_{r=0}^{k-j} \binom{k-j}{r} \lim_{z \to 1} \frac{d^{k-j-r}}{dz^{k-j-r}} \left( \Gamma(z) \right)  \lim_{z \to 1} \frac{d^{r}}{dz^{r}} \left( \frac{1}{\Gamma(z)} \right)\\
		&=b_{k+1,j},
	\end{align*}
	where, in the last step, we used Leibnitz theorem and the fact that $k-j\geq1$. 
	Since $a_{k,j}$ and $b_{k,j}$ have the same initial conditions and satisfy the same recursive conditions, we see that $a_{k,j}=b_{k,j}$ for all $k \in \mathbb{N}$ and $j \in \mathbb{N} \cup \left\{0\right\}$.
	To prove part (ii) of the lemma, we use Leibnitz Theorem again so that for $k \in \mathbb{N} \cup \left\{0\right\}$,
	\begin{align*}
		\lim_{z\to1}\frac{d^k}{dz^k} \left( \frac{t^{z-1}}{\Gamma(z)} \right) = \lim_{z \to 1}\sum_{j=0}^{k} \binom{k}{j} \frac{d^j}{dz^j} (t^{z-1}) \frac{d^{k-j}}{dz^{k-j}} \left( \frac{1}{\Gamma(z)} \right)
		= \sum_{j=0}^{k} a_{k+1,j}  \log^j(t)
		= S_{k+1}(t),
	\end{align*}
	where in the second step, we used part (i) of this lemma.
\end{proof}

\noindent
	In the statement of Theorem \ref{thm-2fe-ishibashi}, we will encounter the constants $c_k, k\geq0,$ defined by
	\begin{align}\label{gz}
 \Gamma(s)\zeta(s)-\frac{1}{s-1}=\sum_{k=0}^{\infty}c_k(s-1)^k.
 \end{align}
 Arakawa and Kaneko \cite[Corollary 5]{arakawa-kaneko} have shown that
	\begin{equation}\label{ck}
	c_k:=\frac{1}{k!}\int_{0}^{\infty}\left\{\log^{k}(t)-\log^{k}\left(\frac{t}{1-e^{-t}}\right)\right\}\left(\frac{1}{e^t-1}-\frac{1}{t}\right)\, dt.	
	\end{equation}
We call them the \emph{Arakawa-Kaneko constants}. Note that $c_0=0$. Also, 
\begin{align*}
c_1=\int_{0}^{\infty}\left(\frac{1}{1-e^{-t}}-\frac{1}{t}\right)\log(1-e^{-t})\, dt-\int_{0}^{\infty}\log(1-e^{-t})\, dt=F(1)+\frac{\pi^2}{6}=-\frac{1}{2}\g^2+\frac{\pi^2}{12}-\g_1,
\end{align*}
where in the second step, we used the special case $z=1$ of \eqref{almost integral3}, and in the last step, we used \eqref{ef1}.

Before stating Theorem \ref{thm-2fe-ishibashi}, we derive some lemmas. The first one shows that the $j^{\textup{th}}$ Taylor coefficient of the Herglotz-Hurwitz function $\Phi(z, x)$ expanded as a power series in $z-1$ is essentially the $j+1$-th order Ishibashi-Herglotz function $\Phi_{j+1}(x)$.

\begin{lemma}\label{power series coefficient of hhf}
	Let $\Phi(z, x)$ and $\Phi_k(x)$ be defined in \eqref{hhf-def} and \eqref{ishibashi-def} respectively. Then for any $j\in\mathbb{N}\cup\{0\}$,
\begin{align*}
	\lim_{z \to 1}\frac{d^j}{dz^j} \Phi(z,x) = \frac{(-1)^{j+1}}{j+1} \Phi_{j+1}(x).
	\end{align*}
\end{lemma}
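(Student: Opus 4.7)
The plan is to expand each summand of the defining series \eqref{hhf-def} for $\Phi(z,x)$ as a Taylor series in $z-1$, read off the $j$-th coefficient, and recognize the resulting sum as a multiple of $\Phi_{j+1}(x)$. First I would start from the Laurent expansion
\[
\zeta(z,a)=\frac{1}{z-1}+\sum_{k=0}^\infty\frac{(-1)^k}{k!}\gamma_k(a)\,(z-1)^k,
\]
where $\gamma_k(a)$ denote the generalized Stieltjes constants, and combine it with the elementary expansion $\frac{a^{1-z}}{z-1}=\frac{1}{z-1}+\sum_{k\ge 0}\frac{(-1)^{k+1}\log^{k+1}(a)}{(k+1)!}(z-1)^k$. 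The simple poles cancel and, after regrouping,
\[
\zeta(z,a)-\frac{a^{1-z}}{z-1}=\sum_{k=0}^\infty\frac{(-1)^k}{k!}\left(\gamma_k(a)+\frac{\log^{k+1}(a)}{k+1}\right)(z-1)^k,
\]
whence $\lim_{z\to 1}\frac{d^j}{dz^j}\bigl(\zeta(z,a)-a^{1-z}/(z-1)\bigr)=(-1)^j\bigl(\gamma_j(a)+\log^{j+1}(a)/(j+1)\bigr)$.

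Next I would set $a=nx$, divide by $n$, and sum over $n\ge 1$. The asymptotic expansion of $\zeta(z,nx)$ quoted immediately after \eqref{hhf-def} ensures that the series \eqref{hhf-def} converges absolutely and uniformly on a complex neighborhood of $z=1$, so Weierstrass's theorem on uniform limits of analytic functions justifies termwise differentiation. This produces
\[
\lim_{z\to 1}\frac{d^j}{dz^j}\Phi(z,x)=(-1)^j\sum_{n=1}^\infty\frac{1}{n}\left(\gamma_j(nx)+\frac{\log^{j+1}(nx)}{j+1}\right).
\]

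The algebraic heart of the argument is then the identity $\gamma_k(a)=-\psi_k(a)$ for every $k\ge 0$. To prove it I would begin from the formula $\gamma_k(a)=\lim_{N\to\infty}\bigl(\sum_{m=0}^N\log^k(m+a)/(m+a)-\log^{k+1}(N+a)/(k+1)\bigr)$, split off the $m=0$ term as $\log^k(a)/a$, insert and subtract $\sum_{m=1}^N\log^k(m)/m$ so as to form the convergent telescoping tail $\sum_{m\ge 1}\bigl(\log^k(m+a)/(m+a)-\log^k(m)/m\bigr)$ together with the ordinary Stieltjes constant $\gamma_k$, and observe that the residual $\bigl(\log^{k+1}(N)-\log^{k+1}(N+a)\bigr)/(k+1)=O(\log^k(N)/N)$ vanishes in the limit. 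Comparing with Dilcher's formula for $\psi_k(a)$ recorded after \eqref{ishibashi-def} then yields $\gamma_k(a)=-\psi_k(a)$. Substituting this back, the inner summand becomes $\bigl(-(j+1)\psi_j(nx)+\log^{j+1}(nx)\bigr)/(j+1)$, and pulling out $-1/(j+1)$ exhibits precisely the Ishibashi series for $\Phi_{j+1}(x)$ with prefactor $(-1)^{j+1}/(j+1)$, as claimed.

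The main obstacle is bookkeeping: one must align the signs, factorials, and repeated factors of $j+1$ across three layers (the Hurwitz Laurent series, the Stieltjes-to-digamma comparison, and the assembly into $\Phi_{j+1}$). As a sanity check I would verify the case $j=0$, which must recover $\lim_{z\to 1}\Phi(z,x)=-F(x)$ in keeping with $\Phi_1=F$ and the constant term of \eqref{fe2}. The uniform-convergence justification for termwise differentiation is routine given the stated $\zeta(z,nx)$ asymptotic and presents no real difficulty.
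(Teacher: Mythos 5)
Your proposal is correct and follows essentially the same route as the paper: both expand each summand $\zeta(z,nx)-\tfrac{(nx)^{1-z}}{z-1}$ termwise around $z=1$, justify termwise differentiation by uniform convergence, and identify the $j$-th coefficient with $(-1)^{j+1}\psi_j(nx)+\tfrac{(-1)^j}{j+1}\log^{j+1}(nx)$. The only difference is that where the paper cites the limit formulas of \cite{dss} for $\lim_{z\to1}\bigl(\tfrac{d^j}{dz^j}\zeta(z,a)+\tfrac{(-1)^{j+1}j!}{(z-1)^{j+1}}\bigr)$, you re-derive the equivalent fact $\gamma_j(a)=-\psi_j(a)$ from the generalized Stieltjes constants, which is a correct, self-contained substitute.
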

\begin{proof}
From the definition of $\Phi(z, x)$ in \eqref{hhf-def}, we have
\begin{align*}
 \Phi(z,x)=\sum_{n=1}^{\infty} \frac{1}{n}\left[\left( \zeta(z,nx)-\frac{1}{z-1}\right)+\left(\frac{1}{z-1} -\frac{(nx)^{1-z}}{z-1}\right)\right].
\end{align*}
Using the absolute and uniform convergence of the above series in Re$(z)\geq\epsilon$, for every $\epsilon>0$, we see that
\begin{align*}
	\lim_{z \to 1}\frac{d^j}{dz^j} \Phi(z,x)
	&=\sum_{n=1}^{\infty}\frac{1}{n}\Bigg[\lim_{z \to 1} \left( \frac{d^j}{dz^j}(\zeta(z,nx))+\frac{(-1)^{j+1}j!}{(z-1)^{j+1}}\right)\nonumber\\&\quad+\lim_{z \to 1} \left(\frac{(-1)^{j}j!}{(z-1)^{j+1}} -\sum_{\ell=0}^{j} \binom{j}{\ell} \frac{(-1)^{j}(j-\ell)! ( nx )^{1-z} \log^{\ell}(nx)}{(z-1)^{j-\ell+1}} \right)\Bigg]\nonumber \\
	&	=\sum_{n=1}^{\infty}\frac{1}{n}\Bigg[ (-1)^{j+1} \psi_j(nx)+\frac{(-1)^j \log^{j+1}(nx)}{ j+1}\Bigg] \nonumber\\
	&	= \frac{(-1)^{j+1}}{j+1} \Phi_{j+1}(nx),
\end{align*}
where in the second step above, we used \cite[Equations (4.8), (4.9), (4.11)]{dss}.
\end{proof}

\begin{lemma}\label{gammazetalemma}
	Let $j$ be a non-negative integer. Then 
	\begin{align}\label{gammazetalemma eqn}
		&\lim_{z \to 1 } \frac{d^j}{dz^j} \left(\frac{1}{\Gamma(z)} \frac{d}{dz}\left(\Gamma(z) \zeta(z) (x^{1-z}-1)\right)\right) \nonumber \\
		& = j \sum_{\ell=1}^{j} \frac{a_{j,\ell-1}}{\ell} \left\{\sum_{n=0}^{\ell -1} c_{\ell -n} \frac{(\ell+1)!}{(n+1)!} (-1)^{n+1} \log^{n+1}(x)  +\frac{(-1)^{\ell} }{\ell +2} \log^{\ell +2}(x)\right\} + \frac{1}{2} a_{j+1,0}\log^2(x).
	\end{align}
\end{lemma}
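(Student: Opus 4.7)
The plan is to Taylor-expand the entire function $G(z,x) := \Gamma(z)\zeta(z)(x^{1-z}-1)$ and the Weierstrass factor $1/\Gamma(z)$ about $z=1$, apply Leibniz's rule to the $j$-th derivative, and then match coefficients with the explicit right-hand side. Note that $G(z,x)$ is entire in $z$, since the simple pole of $\Gamma(z)\zeta(z)$ at $z=1$ is cancelled by the simple zero of $x^{1-z}-1$ there. Multiplying the Laurent expansion from \eqref{gz} by the elementary series
\[
x^{1-z}-1 \;=\; \sum_{n=1}^{\infty}\frac{(-1)^{n}(z-1)^{n}}{n!}\log^{n}(x)
\]
yields an explicit power series $G(z,x) = \sum_{m=0}^{\infty} A_{m}(x)(z-1)^{m}$ whose coefficients $A_{m}(x)$ are linear combinations of the powers $\log^{n}(x)$, weighted by the Arakawa-Kaneko constants $c_{k}$ defined in \eqref{gz}. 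In particular $A_{1}(x) = \tfrac{1}{2}\log^{2}(x)$ because $c_{0}=0$ kills the lone $c$-contribution.

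Next, Leibniz's rule gives
\[
\frac{d^{j}}{dz^{j}}\!\left(\frac{1}{\Gamma(z)}\,\frac{dG}{dz}\right)
 \;=\; \sum_{i=0}^{j}\binom{j}{i}\,\frac{d^{i}}{dz^{i}}\!\left(\frac{1}{\Gamma(z)}\right)\frac{d^{j-i+1}G}{dz^{j-i+1}}.
\]
Evaluating at $z=1$, Lemma \ref{Lemma coeff}(i) applied with $k=i+1$ and $j_{0}=0$ identifies $\lim_{z\to 1}\tfrac{d^{i}}{dz^{i}}(1/\Gamma(z)) = a_{i+1,0}$, while $\lim_{z\to 1}\tfrac{d^{j-i+1}G}{dz^{j-i+1}} = (j-i+1)!\,A_{j-i+1}(x)$ by standard Taylor-coefficient extraction. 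Substituting $\ell' := j-i+1$ converts the whole expression into a finite sum indexed by $1\leq \ell'\leq j+1$.

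The term $\ell'=1$ contributes exactly $a_{j+1,0}A_{1}(x) = \tfrac{1}{2}a_{j+1,0}\log^{2}(x)$, matching the final summand of the right-hand side of \eqref{gammazetalemma eqn}. For $\ell'\geq 2$ I reindex $\ell := \ell'-1$ and use the algebraic identity
\[
\binom{j}{\ell}\,a_{j-\ell+1,0} \;=\; \frac{j\,a_{j,\ell-1}}{\ell},
\]
which is immediate from Lemma \ref{Lemma coeff}(i) combined with the elementary $\tfrac{j}{\ell}\binom{j-1}{\ell-1} = \binom{j}{\ell}$; this is the only non-obvious algebraic step and is what converts the Leibniz prefactor into the shape demanded by the statement. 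Expanding $(\ell+1)!\,A_{\ell+1}(x)$ and using $c_{0}=0$ to eliminate the $n=\ell+1$ term of the $c$-sum then reproduces exactly the content of the braces in \eqref{gammazetalemma eqn}. Since all series converge absolutely in a neighborhood of $z=1$ there is no analytic obstruction, so the main (but entirely routine) difficulty is the careful bookkeeping of the shifted indices and the recognition of the binomial identity above.
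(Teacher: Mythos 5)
Your proposal is correct and follows essentially the same route as the paper: Leibniz's rule on the product $\frac{1}{\Gamma(z)}\cdot\frac{d}{dz}G(z,x)$, identification of the derivatives of $1/\Gamma$ at $z=1$ with the $a_{k,j}$ via Lemma \ref{Lemma coeff}(i), and extraction of the Taylor coefficients of $G(z,x)=\Gamma(z)\zeta(z)(x^{1-z}-1)$ from the expansion \eqref{gz} together with $c_0=0$. The only (cosmetic) difference is that you read off the coefficients $A_{\ell+1}(x)$ of the product series directly and then invoke the binomial identity $\binom{j}{\ell}a_{j-\ell+1,0}=\frac{j}{\ell}a_{j,\ell-1}$, whereas the paper applies Lemma \ref{Lemma coeff}(i) with general second index and a second Leibniz expansion to reach the same terms.
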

\begin{proof}
	Using Leibnitz's rule for successive differentiation in the first step, separating the $\ell=0$ term, and using Lemma \ref{Lemma coeff} and \eqref{gz} in the second and third steps below respectively, we see that
	\begin{align*}
		&\lim_{z \to 1 } \frac{d^j}{dz^j} \left(\frac{1}{\Gamma(z)} \frac{d}{dz}\left(\Gamma(z) \zeta(z) (x^{1-z}-1)\right)\right) \nonumber \\
		& = \lim_{z\to 1} \left[\sum_{\ell=1}^{j} \binom{j}{\ell} \left\{  \frac{d^{\ell +1}}{dz^{\ell+1}}  \left(\Gamma(z) \zeta(z) (x^{1-z}-1)\right) \frac{d^{j-\ell}}{dz^{j-\ell}} \left(\frac{1}{\Gamma(z)} \right) \right\} + \frac{d}{dz}  \left(\Gamma(z) \zeta(z) (x^{1-z}-1)\right) \frac{d^{j}}{dz^{j}} \left(\frac{1}{\Gamma(z)} \right)\right] \nonumber \\
		&   =  \sum_{\ell=1}^{j} \binom{j}{\ell} \left\{ \lim_{z \to 1}  \frac{d^{\ell +1}}{dz^{\ell+1}}  \left(\left(\Gamma(z) \zeta(z)-\frac{1}{z-1} \right)(x^{1-z}-1)+\frac{x^{1-z}-1}{z-1}\right)  \right\} \frac{a_{j, \ell-1}}{\binom{j-1}{\ell-1}} \nonumber \\
		&\quad  +\lim_{z \to 1}  \frac{d}{dz}  \left(\left(\Gamma(z) \zeta(z)-\frac{1}{z-1} \right)(x^{1-z}-1)+\frac{x^{1-z}-1}{z-1}\right) a_{j+1,0} \nonumber \\
		&   =  \sum_{\ell=1}^{j} \binom{j}{\ell} \left\{ \sum_{n=0}^{\infty} c_n  \lim_{z \to 1}  \frac{d^{\ell +1}}{dz^{\ell+1}}  (z-1)^n (x^{1-z}-1)+  \lim_{z \to 1}  \frac{d^{\ell +1}}{dz^{\ell+1}}\frac{\left(x^{1-z}-1\right)}{z-1} \right\} \frac{a_{j, \ell-1}}{\binom{j-1}{\ell-1}} + \frac{1}{2} a_{j+1,0}\log^2(x)\nonumber \\
		&   =  \sum_{\ell=1}^{j} \binom{j}{\ell} \left\{ \sum_{n=0}^{\infty} c_n \lim_{z \to 1 }\sum_{r=0}^{\ell+1} \binom{\ell+1}{r} \frac{d^r}{dz^r}(z-1)^n   \frac{d^{\ell +1-r}}{dz^{\ell+1-r}}  (x^{1-z}-1)+\frac{(-1)^{\ell+2}}{\ell+2} \log^{\ell+2}(x)\right\} \frac{a_{j, \ell-1}}{\binom{j-1}{\ell-1}}\nonumber\\
		&\quad+ \frac{1}{2} a_{j+1,0}\log^2(x).
	\end{align*}
	
	Note that 			$\lim_{z \to 1 }\frac{d^r}{dz^r}(z-1)^n =0$ unless $r=n$, and in that case it is equal to $n!$. Also, when $r=\ell+1$,   we have $ \lim_{z \to 1 }\frac{d^{\ell +1-r}}{dz^{\ell+1-r}}  (x^{1-z}-1)=0$, and is equal to $ (-1)^{\ell-n+1} \log^{\ell-n+1}(x)$ otherwise. 
	 Therefore,
	\begin{align*}
		&\lim_{z \to 1 } \frac{d^j}{dz^j} \left(\frac{1}{\Gamma(z)} \frac{d}{dz}\left(\Gamma(z) \zeta(z) (x^{1-z}-1)\right)\right) \nonumber \\
		& =  \sum_{\ell=1}^{j} \binom{j}{\ell} \left\{ \sum_{n=0}^{\ell} c_n \binom{\ell+1}{n} n!(-1)^{\ell-n+1} \log^{\ell-n+1}(x) +\frac{(-1)^{\ell}}{\ell+2} \log^{\ell+2}(x)\right\} \frac{a_{j, \ell-1}}{\binom{j-1}{\ell-1}}  +  \frac{1}{2} a_{j+1,0}\log^2(x).
	\end{align*}
	Replacing $n$ by $\ell-n$, we arrive at \eqref{gammazetalemma eqn}.
\end{proof}

 The two-term functional equation for involving the Ishibashi functions $\Phi_j(x), 1\leq j\leq k$, is stated next.
		\begin{theorem}\label{thm-2fe-ishibashi}
	Let $\Phi_k(x)$ be defined in \eqref{ishibashi-def}. For $k\geq0$ and $x>0$,
		\begin{align}\label{two-term eqn for Ishibashi}
		&\sum_{j=0}^k\frac{(-1)^{j+1}}{j+1}\binom{k}{j}\frac{\log^{k-j}(x)}{2^{k-j}}\left(\Phi_{j+1}(x)+(-1)^{k-j}\Phi_{j+1}\left(\frac{1}{x}\right)\right)\nonumber\\
		&=\sum_{j=0}^k\binom{k}{j}\frac{\log^{k-j}(x)}{2^{k-j}}\bigg[jL_{j-1}^*(x)-\frac{1}{2}a_{j+1, 0}\log^{2}(x)+\left(x+\frac{(-1)^{k-j}}{x}-2\right)\zeta^{(j)}(2)\nonumber\\
		&\quad-j\sum_{\ell=1}^{j}\frac{a_{j, \ell-1}}{\ell}\left\{\sum_{n=0}^{\ell-1}(-1)^{n+1}c_{\ell-n}\frac{(\ell+1)!}{(n+1)!}\log^{n+1}(x)+\frac{(-1)^{\ell}}{\ell+2}\log^{\ell+2}(x)\right\}+2\frac{(-1)^{j+1}}{j+1}\Phi_{j+1}(1)\bigg],
	\end{align}
	where, for any $k\in\mathbb{N}\cup\{-1, 0\}$,
	\begin{align}\label{lk*}
		L_k^*(x):=	
		\begin{cases}
	\lim\limits_{z \to 1}\displaystyle\frac{d^k}{dz^k} I(z, x),\text{if}\hspace{2mm}k\in\mathbb{N}\cup\{0\},\\
	0, \hspace{20mm}\text{if}\hspace{2mm}k=-1,
		\end{cases}
	\end{align}
	with $I(z, x)$ and $c_k$ defined in \eqref{int izx} and \eqref{ck} respectively.
\end{theorem}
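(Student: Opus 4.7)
The strategy is a Hurwitz lift: take the two-term functional equation for $\Phi(z,x)$ in Theorem \ref{decompostion theorem}, recast it so that each piece of the right-hand side of \eqref{two-term eqn for Ishibashi} arises from a definite source, multiply by $x^{(z-1)/2}$, differentiate $k$ times in $z$, and send $z\to 1$.

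First, I would massage \eqref{decomposition} into a form adapted to Laurent expansion around $z=1$. Using \eqref{izx} to replace $\Theta(z,x)$, together with the $x=1$ instance of Theorem \ref{decompostion theorem} (which gives $\Theta(z,1)=2\Phi(z,1)+\frac{2}{z-1}\zeta(z)-2\zeta(z+1)$) to eliminate $\Theta(z,1)$, and then exploiting $\psi(z-1)+\frac{1}{z-1}=\psi(z)$ to combine the $\psi$, $\zeta$, $\zeta'$ and $\log(x)$ pieces into a single derivative, the two-term relation collapses to
\begin{align*}
\Phi(z,x)+x^{1-z}\Phi\!\left(z,\tfrac{1}{x}\right)
=(z-1)I(z,x)+2\Phi(z,1)-\frac{1}{\Gamma(z)}\frac{d}{dz}\!\bigl[\Gamma(z)\zeta(z)(x^{1-z}-1)\bigr]+(x+x^{-z}-2)\zeta(z+1).
\end{align*}
The combination $\Gamma(z)\zeta(z)$ is introduced deliberately because its Laurent expansion around $z=1$ is governed by the Arakawa-Kaneko constants $c_k$ of \eqref{gz}, and it is exactly this structure that Lemma \ref{gammazetalemma} is tailored to exploit.

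Next, to generate the symmetric $2^{k-j}$ denominators in \eqref{two-term eqn for Ishibashi}, I would multiply both sides by $x^{(z-1)/2}$. The identities
\begin{align*}
\frac{d^{k-j}}{dz^{k-j}}x^{(z-1)/2}\bigg|_{z=1}=\frac{\log^{k-j}(x)}{2^{k-j}},\qquad
\frac{d^{k-j}}{dz^{k-j}}x^{(1-z)/2}\bigg|_{z=1}=\frac{(-1)^{k-j}\log^{k-j}(x)}{2^{k-j}},
\end{align*}
together with Leibnitz's rule and Lemma \ref{power series coefficient of hhf}, show that the $k$th derivative of $x^{(z-1)/2}\Phi(z,x)+x^{(1-z)/2}\Phi(z,1/x)$ at $z=1$ is exactly the left-hand side of \eqref{two-term eqn for Ishibashi}. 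Then I would differentiate each right-hand side term $k$ times and let $z\to 1$. The factor $(z-1)$ in $(z-1)x^{(z-1)/2}I(z,x)$ kills all Leibnitz contributions except a single index shift, producing the $jL^{*}_{j-1}(x)$ piece with the convention $L^{*}_{-1}(x)=0$. The term $2x^{(z-1)/2}\Phi(z,1)$ yields the $2(-1)^{j+1}\Phi_{j+1}(1)/(j+1)$ piece via Lemma \ref{power series coefficient of hhf} at $x=1$. The prefactors of $\zeta(z+1)$ differentiate to $\frac{\log^{k-j}(x)}{2^{k-j}}(x+(-1)^{k-j}/x-2)$ directly. Finally, the $\Gamma(z)\zeta(z)(x^{1-z}-1)$ term is precisely the quantity whose $k$th derivative is computed by Lemma \ref{gammazetalemma}, producing both the $-\frac{1}{2}a_{j+1,0}\log^2(x)$ summand and the double sum over $\ell,n$ involving $c_{\ell-n}$.

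The main technical obstacle is Lemma \ref{gammazetalemma} itself, which extracts the Arakawa-Kaneko constants from derivatives of $\frac{1}{\Gamma(z)}\frac{d}{dz}[\Gamma(z)\zeta(z)(x^{1-z}-1)]$ by combining the expansion \eqref{gz} with the characterization of the Ishibashi coefficients $a_{k,j}$ in Lemma \ref{Lemma coeff}(i); this is the one place where the structure of the answer requires genuine work, and it has already been settled in advance. Once the four contributions identified above are collected, they assemble without further cancellation into \eqref{two-term eqn for Ishibashi}, completing the proof.
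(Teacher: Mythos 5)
Your proposal is correct and follows essentially the same route as the paper: the same rewriting of \eqref{decomposition} via \eqref{izx}, the $x=1$ case, and $\psi(z-1)=\psi(z)-\tfrac{1}{z-1}$ to isolate $(z-1)I(z,x)$, $2\Phi(z,1)$, the $\frac{1}{\Gamma(z)}\frac{d}{dz}[\Gamma(z)\zeta(z)(x^{1-z}-1)]$ term and the $\zeta(z+1)$ term, followed by multiplication by $x^{(z-1)/2}$, $k$-fold differentiation, and Lemmas \ref{power series coefficient of hhf} and \ref{gammazetalemma}. The only point worth making explicit is that $\lim_{z\to1}(z-1)\frac{d^j}{dz^j}I(z,x)=0$ rests on the analyticity of $I(z,x)$ near $z=1$, which the paper justifies via the uniform convergence of the integral $E(z,x)$ established in the proof of Theorem \ref{principal part}.
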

\begin{proof}
Multiply both sides of \eqref{decomposition} by $x^{\frac{z-1}{2}}$, recall the definition of $\Theta(z, x)$ in \eqref{Thetadef}, and differentiate the resulting equation $k$ times with respect to $z$ using Leibnitz rule to see that for \textup{Re}$(z)>1$ and $x>0$,
	\begin{align}\label{afterdiff1}
		&x^{{z-1}{2}} \sum_{j=0}^{k} \binom{k}{j} \frac{\log^{k-j}(x)}{2^{k-j}} \frac{d^j}{dz^j} \Phi(z,x)+x^{\tfrac{1-z}{2}} \sum_{j=0}^{k} \binom{k}{j} \frac{\log^{k-j}\left(\frac{1}{x}\right)}{2^{k-j}} \frac{d^j}{dz^j} \Phi\left(z,\frac{1}{x}\right) \nonumber \\&= x^{\tfrac{z-1}{2}} \sum_{j=0}^{k} \binom{k}{j} \frac{\log^{k-j}(x)}{2^{k-j}} \frac{d^j}{dz^j}\left(	\Theta(z, x)-\frac{(1+x^{1-z})}{z-1}\zeta(z)\right) \nonumber \\&\quad+ x^{\tfrac{z-1}{2}} \sum_{j=0}^{k} \binom{k}{j} \frac{\log^{k-j}(x)}{2^{k-j}} \left(x+(-1)^{k-j}x^{-z}\right) \frac{d^j}{dz^j}\zeta(z+1).
	\end{align}
	We need to evaluate 
	\begin{align*}
	\lim_{z\to1}	\frac{d^j}{dz^j}\left(	\Theta(z, x)-\frac{(1+x^{1-z})}{z-1}\zeta(z)\right).
	\end{align*}
To that end, using the functional $\psi(z-1)=\psi(z)-\frac{1}{z-1}$ twice in \eqref{izx} and subtract $2\zeta(z)/(z-1)$ from both sides of the resulting equation to get
\begin{align*}
	 \Theta(z, x) -\frac{(1+x^{1-z}) \zeta(z)}{z-1}
	 &=(z-1)I(z, x)- \left\{x^{1-z} (\psi(z) \zeta(z) + \zeta'(z) - \log(x) \zeta(z))- \psi(z) \zeta(z) - \zeta'(z)\right\}\nonumber\\
	 &\quad+\Theta(z, 1)-\frac{2 \zeta(z)}{z-1},
\end{align*} 
where $I(z, x)$ is defined in \eqref{int izx}. Now using \eqref{decomposition} with $x=1$ , and observing that the expression in curly braces is nothing but $\frac{1}{\Gamma(z)}\frac{d}{dz}\left((x^{1-z}-1)\Gamma(z)\zeta(z)\right)$, we find that
\begin{align*}
	\Theta(z,x)-\frac{(1+x^{1-z}) \zeta(z)}{z-1}&=(z-1) I(z,x)- \frac{1}{\Gamma(z)} \frac{d}{dz} \left( (x^{1-z}-1)\Gamma(z)\zeta(z) \right)  +2 \Phi(z,1)-2 \zeta(z+1).
\end{align*}
Differentiating both sides of the above equation $j$ times with respect to $z$ and then letting $z \to 1 $, we get
\begin{align}\label{lastbutonestep}
	&\lim_{z \to 1} \frac{d^j}{dz^j}\left(	\Theta(z,x)-\frac{(x^{1-z}+1) \zeta(z)}{z-1}\right)\nonumber\\
	&= \lim_{z \to 1}\left( (z-1)  \frac{d^j}{dz^j} I(z,x)\right)+ j \lim_{z \to 1}\left(   \frac{d^{j-1}}{dz^{j-1}} I(z,x)\right)\nonumber\\
	&\quad-\lim_{z \to 1} \frac{d^j}{dz^j}\left( \frac{1}{\Gamma(z)} \frac{d}{dz} \left( (x^{1-z}-1)\Gamma(z)\zeta(z) \right)\right) +2\lim_{z \to 1} \frac{d^j}{dz^j}( \Phi(z,1)- \zeta(z+1)).
	\end{align}
	Note that $I(z, x)=\frac{1}{\Gamma(z)}\left(E(z, x)-E(z, 1)\right)$, where $E(z, x)$ is the integral occurring in \eqref{befend}. The latter was shown to be uniformly convergent in Re$(z)\geq\epsilon$ for every $\epsilon>0$ in the proof of Theorem \ref{principal part}. Hence $I(z, x)$ also converges uniformly in this region and hence represents an analytic function of $z$. This implies $\lim_{z \to 1} (z-1)  \frac{d^j}{dz^j} I(z,x)=0$. This, along with the definition of $L_{k}^{*}(x)$ in \eqref{lk*} and applications of Lemmas \ref{power series coefficient of hhf} and \ref{gammazetalemma}, implies that
	\begin{align*}
		&\lim_{z \to 1} \frac{d^j}{dz^j}\left(	\Theta(z,x)-\frac{(x^{1-z}+1) \zeta(z)}{z-1}\right)\nonumber\\
		&= j L_{j-1}^*(x)+2	\frac{(-1)^{j+1}}{j+1} \Phi_{j+1}(1)-2 \zeta^{j+1}(2)\nonumber\\
		&- 	j \sum_{\ell=1}^{j} \frac{a_{j,\ell-1}}{\ell} \left\{\sum_{n=0}^{\ell -1} c_{\ell -n} \frac{(\ell+1)!}{(n+1)!} (-1)^{n+1} \log^{n+1}(x)  +\frac{(-1)^{\ell} }{\ell +2} \log^{\ell +2}(x)\right\} -\frac{\log^2(x)}{2} a_{j+1,0}.
		\end{align*}
		Finally, letting $z\to 1$ in \eqref{afterdiff1}, substituting the above evaluation there, using Lemma \ref{power series coefficient of hhf} twice, and simplifying, we arrive at \eqref{two-term eqn for Ishibashi}. 
\end{proof}

An immediate special case of the above theorem is the two-term functional equation for $F(x)$ given by Zagier. 	
\begin{corollary}\label{k=0}
The functional equation in \eqref{fe2} holds.
\end{corollary}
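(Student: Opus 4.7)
The plan is to derive the corollary as the $k=0$ specialization of Theorem \ref{thm-2fe-ishibashi}. Setting $k=0$ collapses every sum over $j$ to the single term $j=0$, which should reduce the elaborate identity \eqref{two-term eqn for Ishibashi} to Zagier's classical relation \eqref{fe2} after routine bookkeeping. Since $\Phi_1(x)=F(x)$ by the definitions in \eqref{ishibashi-def} and \eqref{herglotzdef}, the left-hand side of \eqref{two-term eqn for Ishibashi} with $k=0$ becomes
\begin{align*}
\frac{(-1)^{1}}{1}\binom{0}{0}\frac{\log^{0}(x)}{2^{0}}\left(\Phi_{1}(x)+(-1)^{0}\Phi_{1}\!\left(\tfrac{1}{x}\right)\right)=-\bigl(F(x)+F(\tfrac{1}{x})\bigr).
\end{align*}

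For the right-hand side, I would go term by term. The first contribution $jL_{j-1}^{*}(x)$ vanishes because $j=0$; this is exactly why the corollary does not see the awkward integral $L_{0}^{*}(x)$ that appears already in the $k=1$ identity \eqref{k=1}. Next, the inner sum $j\sum_{\ell=1}^{j}\cdots$ is likewise zero because its range is empty. The coefficient $a_{1,0}$ equals $1$ by the initial condition $a_{1,0}=1$ recorded just before Lemma \ref{Lemma coeff}, so the quadratic term produces $-\tfrac{1}{2}\log^{2}(x)$. The term involving $\zeta^{(j)}(2)$ contributes $\bigl(x+\tfrac{1}{x}-2\bigr)\zeta(2)=\tfrac{\pi^{2}(x-1)^{2}}{6x}$, and the final piece is $2\cdot\tfrac{-1}{1}\Phi_{1}(1)=-2F(1)$.

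Putting the two sides together yields
\begin{align*}
-\bigl(F(x)+F(\tfrac{1}{x})\bigr)=-\tfrac{1}{2}\log^{2}(x)+\tfrac{\pi^{2}(x-1)^{2}}{6x}-2F(1),
\end{align*}
and multiplying by $-1$ gives exactly \eqref{fe2}. There is no real obstacle here: the only thing to check carefully is that every $j$-indexed term that produced complicated objects (namely $L_{j-1}^{*}(x)$, the Arakawa-Kaneko combination, and the Ishibashi polynomial coefficients $a_{j,\ell-1}$ with $\ell\ge 1$) either drops out due to the factor $j$ or because the summation range collapses, leaving only the elementary pieces that Zagier's identity actually contains. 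The main ``work'' is therefore just verifying the empty/vanishing sum conventions and tracking the sign $(-1)^{k-j}=1$ on the $\Phi_{j+1}(1/x)$ and $1/x$ terms at $k=j=0$.
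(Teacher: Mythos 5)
Your proposal is correct and follows exactly the paper's route: the paper's proof of this corollary is literally ``Put $k=0$ in Theorem \ref{thm-2fe-ishibashi} and simplify,'' and your term-by-term bookkeeping (the factor $j$ killing $L_{j-1}^{*}(x)$ and the Arakawa--Kaneko sum, $a_{1,0}=1$, and $\bigl(x+\tfrac{1}{x}-2\bigr)\zeta(2)=\tfrac{\pi^{2}(x-1)^{2}}{6x}$) is precisely that simplification carried out correctly.
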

\begin{proof}
	Put $k=0$ in Theorem \ref{thm-2fe-ishibashi} and simplify.
\end{proof}

\begin{corollary}\label{k=1 cor}
	The functional equation in \eqref{k=1} holds.
\end{corollary}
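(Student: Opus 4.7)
The plan is to specialize Theorem \ref{thm-2fe-ishibashi} at $k=1$ and rearrange the two surviving summands ($j=0$ and $j=1$) into the shape of \eqref{k=1}. Since Theorem \ref{thm-2fe-ishibashi} is already available, no new analytic input is required; the computation is purely algebraic bookkeeping.

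First I would verify the left-hand side: with $k=1$ the sum $\sum_{j=0}^{1}\frac{(-1)^{j+1}}{j+1}\binom{1}{j}\frac{\log^{1-j}(x)}{2^{1-j}}\bigl(\Phi_{j+1}(x)+(-1)^{1-j}\Phi_{j+1}(1/x)\bigr)$ collapses at once to $-\tfrac{1}{2}\log(x)\bigl(\Phi_1(x)-\Phi_1(1/x)\bigr)+\tfrac{1}{2}\bigl(\Phi_2(x)+\Phi_2(1/x)\bigr)$, which is precisely the left-hand side of \eqref{k=1}. Next I would split the right-hand side of \eqref{two-term eqn for Ishibashi} into its $j=0$ and $j=1$ pieces. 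The $j=0$ piece carries prefactor $\tfrac12\log(x)$, and both $jL_{j-1}^{*}(x)$ and the inner $\ell$-sum drop out (the former because of the factor $j=0$, the latter because the $\ell$-range is empty); using $a_{1,0}=1$ and $\zeta(2)=\pi^2/6$, its bracket becomes $-\tfrac12\log^2(x)+(x-1/x-2)\pi^2/6-2\Phi_1(1)$. For $j=1$ the prefactor is $1$ and the inner $\ell$-sum has the single term $\ell=1$ (with $n=0$), contributing $+2c_1\log(x)+\tfrac13\log^3(x)$ after multiplication by $-j=-1$; reading $a_{2,0}$ off $S_2(t)=\gamma+\log(t)$, the $j=1$ bracket becomes $L_0^{*}(x)-\tfrac12 a_{2,0}\log^2(x)+(x+1/x-2)\zeta'(2)+2c_1\log(x)+\tfrac13\log^3(x)+\Phi_2(1)$.

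The last step is to combine and regroup. The two $\zeta'(2)$-contributions merge as $\zeta'(2)(x+1/x)-2\zeta'(2)$; the $\pi^2$-fragments coming from the $j=0$ bracket split into $\tfrac{\pi^2}{12}\log(x)(x-1/x)$ and $-\tfrac{\pi^2}{6}\log(x)$; and the leftover $-\tfrac14\log^3(x)-\Phi_1(1)\log(x)-\tfrac{\pi^2}{6}\log(x)$ assembles into $-\tfrac{1}{2}\log(x)\bigl(\tfrac12\log^2(x)+2\Phi_1(1)+\tfrac{\pi^2}{3}\bigr)$, while the residual cubic $\tfrac13\log^3(x)$ together with the constants $L_0^{*}(x)$, $\Phi_2(1)$, and the log-squared term stand as they are. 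After these rearrangements the right-hand side matches \eqref{k=1} term by term.

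I do not expect any genuine obstacle: the entire proof is a careful tracking of the Ishibashi coefficients $a_{k,j}$ (here only $a_{1,0}=1$ and $a_{2,0}$) and of the sign pattern $(-1)^{k-j}$ inside the factor $\bigl(x+\frac{(-1)^{k-j}}{x}-2\bigr)$. The Arakawa--Kaneko constant $c_1$ enters exactly once, through the $(\ell,n)=(1,0)$ slot of the inner sum, which is consistent with its appearance in the statement \eqref{k=1}.
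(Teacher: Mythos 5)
Your proposal is correct and follows exactly the paper's route: the paper's proof of Corollary \ref{k=1 cor} is simply ``put $k=1$ in Theorem \ref{thm-2fe-ishibashi} and simplify,'' and your term-by-term bookkeeping of the $j=0$ and $j=1$ contributions is the intended computation. The only point worth flagging is that with $a_{2,0}=\gamma$ (read off $S_2(t)=\gamma+\log(t)$) the specialization produces the term $-\tfrac{\gamma}{2}\log^2(x)$ rather than the $-\tfrac{\gamma^2}{2}\log^2(x)$ printed in \eqref{k=1}, which appears to be a typographical slip in the displayed identity rather than an error in your argument.
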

\begin{proof}
	Put $k=1$ in Theorem \ref{thm-2fe-ishibashi} and simplify.
\end{proof}
\subsubsection{An alternative representation for $L_{k}^*(x)$}

The result derived next expresses the limit $L_k^*(x)$ encountered in Theorem \ref{thm-2fe-ishibashi} as a finite sum involving the constants $a_{k+1, j}$ occurring in the definition of the Ishibashi polynomial $S_{k+1}(t)$ and certain integrals of logarithm. 

\begin{proposition}\label{another lk*}
Let $L_k^*(x)$ be defined in \eqref{lk*}. We have
	\begin{align}\label{lk**}
	L_k^*(x):=	\sum_{j=0}^{k}a_{k+1, j}\vspace{1mm}	\int_{0}^{\infty} \frac{\log^{j}(t)}{t} \log \left( \frac{1-e^{-xt}}{1-e^{-t}} \right) \log \left( \frac{1-e^{-t}}{t}  \right) \, dt.
\end{align}
\end{proposition}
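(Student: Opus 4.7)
The plan is to differentiate under the integral sign in the definition of $I(z,x)$ and pass to the limit $z \to 1$, with Lemma \ref{Lemma coeff}(ii) supplying the Ishibashi polynomial as the residual factor. Writing $t^{z-2}/\Gamma(z) = (1/t)\cdot t^{z-1}/\Gamma(z)$, an application of Lemma \ref{Lemma coeff}(ii) yields
\begin{align*}
\lim_{z\to 1}\frac{d^k}{dz^k}\!\left(\frac{t^{z-2}}{\Gamma(z)}\right) = \frac{S_{k+1}(t)}{t} = \sum_{j=0}^{k} a_{k+1,j}\,\frac{\log^{j}(t)}{t}.
\end{align*}
Thus once I can move the limit and the $k$-fold $z$-derivative past the integral sign, the identity in \eqref{lk**} follows at once.

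The bulk of the argument is the justification of that interchange. I would set
\begin{align*}
g(t,x):=\log\!\left(\frac{1-e^{-xt}}{1-e^{-t}}\right)\log\!\left(\frac{1-e^{-t}}{t}\right),
\end{align*}
and consider a small disc $|z-1|\le \delta$. Differentiating $t^{z-2}/\Gamma(z)$ via Leibniz produces a finite sum of terms of the form $c_j(z)\,t^{z-2}\log^j(t)$ with $c_j(z)$ analytic near $z=1$, so it suffices to bound each $\int_{0}^{\infty} t^{z-2}|\log^{j}(t)|\,|g(t,x)|\,dt$ uniformly in a neighbourhood of $z=1$. The bounds recorded in Section \ref{three-term Feq of theta} (the ones used to establish the absolute convergence of $I(z,x)$) give $g(t,x)=O_x(t)$ as $t\to 0^+$ (from $\log((1-e^{-xt})/(1-e^{-t}))=O(\log x)$ and $\log((1-e^{-t})/t)=O(t)$) and $g(t,x)=O(e^{-\min(1,x)t}\log t)$ as $t\to\infty$. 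Combined with $\delta<1$ these imply integrability uniformly in $|z-1|\le\delta$, so the standard differentiation-under-the-integral theorem applies, and
\begin{align*}
\frac{d^k}{dz^k} I(z,x)=\int_{0}^{\infty}\frac{d^k}{dz^k}\!\left(\frac{t^{z-2}}{\Gamma(z)}\right)\, g(t,x)\,dt.
\end{align*}

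The main (and essentially only) obstacle is packaging these tail and near-origin estimates cleanly enough to invoke dominated convergence while passing $z \to 1$; the algebraic manipulation itself is a one-line application of Lemma \ref{Lemma coeff}(ii). Once the limit is inside, termwise identification of the coefficients $a_{k+1,j}$ delivers \eqref{lk**} and completes the proof.
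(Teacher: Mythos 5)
Your proposal is correct and follows essentially the same route as the paper: differentiate under the integral sign in $I(z,x)$, identify $\lim_{z\to1}\frac{d^k}{dz^k}\bigl(t^{z-1}/\Gamma(z)\bigr)$ with the Ishibashi polynomial $S_{k+1}(t)$ via Lemma \ref{Lemma coeff}(ii), and expand $S_{k+1}(t)=\sum_j a_{k+1,j}\log^j(t)$. The paper justifies the interchange by citing the uniform convergence of $E(z,x)$ already established in the proof of Theorem \ref{principal part}, whereas you re-derive the necessary bounds explicitly; both justifications are sound.
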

\begin{proof}
	 Using Lemma \ref{Lemma coeff} and the discussion following \eqref{lastbutonestep}, we obtain
\begin{align*}
	L_k^*(x)=		\int_{0}^{\infty} \frac{S_{k+1}(t)}{t} \log \left( \frac{1-e^{-xt}}{1-e^{-t}} \right) \log \left( \frac{1-e^{-t}}{t}  \right) \, dt,
\end{align*}
which leads to \eqref{lk**} using the definition of the Ishibashi polynomial $S_{k+1}(t)$.
\end{proof}

\begin{remark}
	The above proposition and part (ii) of  Lemma \ref{Lemma coeff} justifies the use of the normalization factor $1/\Gamma(z)$ before the integrals in, say, \eqref{h1} and \eqref{h2}.
\end{remark}

	\subsection{Three-term functional equation involving derivatives of Ishibashi functions}

For $j\in\mathbb{N}$, let $\Phi_j(x)$ be the Ishibashi function defined in \eqref{ishibashi-def}. Note that
	\begin{align*}
	\Phi'_{j}(x) = j\sum_{n=1}^{\infty} \left(\psi'_{j-1}(nx)- \frac{\log^{j-1}(nx)}{nx}\right).
\end{align*}
Guinand \cite[p.~4]{apg3} derived the two-term functional equation for an infinite series involving $\psi'(x)$, namely, 
\begin{equation*}
\sum_{n=1}^{\infty}\left(\psi'(1+nx)-\frac{1}{nx}\right)=	\frac{1}{x^2}\sum_{n=1}^{\infty}\left(\psi'\left(1+\frac{n}{x}\right)-\frac{x}{n}\right)+\frac{\log(x)}{x},
\end{equation*}
which is really a two-term relation for $\Phi'_1(x)$ since 	$\psi(1+x)=	\psi(x) +1/x$. This result was generalized in \cite[Theorem 2.12]{dss} for the first derivatives of the generalized digamma functions $\psi_{j}(x)$. In what follows, we give a \emph{three-term functional equation} involving $\Phi'_j(x)$.
\begin{theorem}\label{Thm 3term-curious}
	Let $\gamma_j$ denote the $j^{\textup{th}}$ Stieltjes constant. 
For a non-negative integer $k$ and $x>1$, 
	\begin{align}\label{curious 3 term}
		&	 \sum_{j=0}^{k} \binom{k}{j}  \frac{(-1)^{j+1}}{j+1}  \left(\frac{\log(x)}{2}\right)^{k-j} \left(  \Phi'_{j+1}(x)-  \Phi'_{j+1}(x-1) + \frac{1}{x^2}   (-1)^{k-j} \Phi'_{j+1}\left(\frac{x-1}{x}\right)\right) \nonumber \\
		&=  \sum_{j=0}^{k} \binom{k}{j} \left(\frac{\log(x)}{2}\right)^{k-j} \sum_{\ell=0}^{j} \binom{j}{\ell} (-1)^{j} \gamma_{j-\ell} \left\{\frac{\log^\ell(x)}{x}-\frac{\log^\ell(x-1)}{x-1}+\frac{(-1)^{k-j}\log^\ell(\tfrac{x-1}{x})}{x (x-1)}\right\} \nonumber \\
		&+ \frac{1}{(k+1) x} \left\{\left(\frac{-\log(x)}{2}\right)^{k+1}-\left(\frac{\log(x)}{2}- \log(x-1)\right)^{k+1} \right\}.
	\end{align}
\end{theorem}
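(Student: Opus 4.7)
The plan is to mirror the Hurwitz-lift strategy of Theorem \ref{thm-2fe-ishibashi}, but applied to the $x$-derivative $g(z,y):=\partial_y\Phi(z,y)$ rather than to $\Phi(z,x)$ itself. First I would derive a three-term functional equation for $g$ in the variables $x$, $x-1$, $(x-1)/x$. Differentiating the series in \eqref{hhf-def} termwise (which is legal in the region of uniform convergence) and using $\partial_y\zeta(z,ny)=-nz\,\zeta(z+1,ny)$ yields
\[
g(z,y)=-z\sum_{n=1}^{\infty}\zeta(z+1,ny)+y^{-z}\zeta(z)\qquad(\mathrm{Re}(z)>1).
\]
Forming $g(z,x)-g(z,x-1)$, applying the Hurwitz shift $\zeta(z+1,m(x-1))-\zeta(z+1,mx)=\sum_{k=0}^{m-1}(m(x-1)+k)^{-z-1}$, interchanging the order of summation (justified by absolute convergence), and making the change of index $n=m+j$ collapses the resulting double sum to $x^{-z-1}\sum_j\zeta(z+1,j(x-1)/x)$. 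Recognizing this piece as $-x^{-z-1}g(z,(x-1)/x)+x^{-1}(x-1)^{-z}\zeta(z)$ and combining the $\zeta(z)$ contributions gives the clean three-term identity
\[
g(z,x)-g(z,x-1)+x^{-z-1}g(z,(x-1)/x)=\zeta(z)\Bigl[x^{-z}-\tfrac{(x-1)^{1-z}}{x}\Bigr].
\]

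Next, multiply both sides by $x^{(z-1)/2}$, differentiate $k$ times in $z$ via Leibniz, and let $z\to 1$. Since $\lim_{z\to 1}\tfrac{d^j}{dz^j}g(z,y)=\tfrac{(-1)^{j+1}}{j+1}\Phi'_{j+1}(y)$ (just differentiate Lemma \ref{power series coefficient of hhf} in $y$), and since $\tfrac{d^m}{dz^m}x^{(z-1)/2}|_{z=1}=(\log(x)/2)^m$ while $\tfrac{d^m}{dz^m}x^{-(z+3)/2}|_{z=1}=(-\log(x)/2)^m x^{-2}$, the three terms on the left assemble to the LHS of \eqref{curious 3 term}; the minus sign $(-1)^{k-j}$ on the $(x-1)/x$ summand comes precisely from the negative exponent in $x^{-(z+3)/2}=x^{(z-1)/2}\cdot x^{-z-1}$.

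For the right side I note that the bracket $x^{-z}-(x-1)^{1-z}/x$ vanishes at $z=1$, so the product with $\zeta(z)$ is analytic there. Setting $a:=-\log(x)/2$ and $b:=\log(x)/2-\log(x-1)$, a short computation gives
\[
x^{(z-1)/2}\Bigl[x^{-z}-\tfrac{(x-1)^{1-z}}{x}\Bigr]=\tfrac{1}{x}\sum_{n\geq 1}\tfrac{a^n-b^n}{n!}(z-1)^n,
\]
so Cauchy-multiplying by the Stieltjes expansion $\zeta(z)=1/(z-1)+\sum_p(-1)^p\gamma_p(z-1)^p/p!$ and extracting $k!\cdot[\,(z-1)^k\,]$ produces
\[
\tfrac{1}{x}\Bigl[\tfrac{a^{k+1}-b^{k+1}}{k+1}+\sum_{n=1}^{k}\binom{k}{n}(-1)^{k-n}\gamma_{k-n}(a^n-b^n)\Bigr].
\]
The first summand is exactly the last line of \eqref{curious 3 term}. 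To recognize the Stieltjes sum as the symmetric three-point form displayed in the theorem, reindex $m=k-n$ (the $m=k$ term is zero) and apply the binomial identity $\sum_{i=0}^{k-m}\binom{k-m}{i}(-1)^i(\log(x)/2)^{k-m-i}c^i=(\log(x)/2-c)^{k-m}$ with $c=\log(x),\log(x-1),-\log((x-1)/x)$ separately for the three inner pieces; the three contributions collapse (using $1/(x-1)-1/(x(x-1))=1/x$) to $(a^{k-m}-b^{k-m})/x$, matching the above expression. The hardest step is Step 1—organizing the Hurwitz-zeta shift plus double-sum reindexing into a three-term identity whose right-hand side is as simple as $\zeta(z)[x^{-z}-(x-1)^{1-z}/x]$—after which the Hurwitz lift and the binomial symmetrization are routine.
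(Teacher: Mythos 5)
Your proposal is correct, and while it follows the same overall skeleton as the paper (establish a three-term identity for $\partial_x\Phi(z,x)$, then Hurwitz-lift by multiplying by a half-power of $x$, differentiating $k$ times in $z$, and letting $z\to1$), both key steps are executed quite differently. Your three-term identity
\begin{equation*}
g(z,x)-g(z,x-1)+x^{-z-1}\,g\!\left(z,\tfrac{x-1}{x}\right)=\zeta(z)\left[x^{-z}-\frac{(x-1)^{1-z}}{x}\right]
\end{equation*}
is algebraically equivalent to the paper's \eqref{three term in z} (with $G(z,x)=\zeta(z)x^{-z}-g(z,x)$), but you obtain it by termwise differentiation of \eqref{hhf-def}, the Hurwitz shift $\zeta(z+1,m(x-1))-\zeta(z+1,mx)=\sum_{k=0}^{m-1}(m(x-1)+k)^{-z-1}$, and a reindexing of the resulting absolutely convergent double sum, whereas the paper goes through the integral $A_z(x,s)$ of \eqref{A def} and the partial-fraction identity for $1/((e^t-1)(e^{xt}-1))$; your route avoids integrals entirely and is self-contained at the level of series. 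The bigger divergence is in the limit evaluation: because your identity isolates all the $\zeta(z)$ contributions on the right from the outset, the right-hand side becomes $\zeta(z)\cdot\frac{1}{x}\bigl(e^{a(z-1)}-e^{b(z-1)}\bigr)$ after multiplying by $x^{(z-1)/2}$, and the $k$-th derivative at $z=1$ drops out of a one-line Cauchy product with the Stieltjes expansion of $\zeta(z)$. The paper instead keeps the $\zeta(z)x^{-z}$ terms entangled with the $\Phi$-derivative terms, and must then show that the singular combination $S(z,x)$ in \eqref{S(z,x)} vanishes to order $k$ at $z=1$ (via three separate binomial collapses) before applying L'Hospital to extract the $(k+1)$-th derivative. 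Your generating-function computation replaces that entire argument; the residual work is only the binomial resummation identifying $\frac{1}{x}\sum_{n}\binom{k}{n}(-1)^{k-n}\gamma_{k-n}(a^n-b^n)$ with the symmetric three-point form in the theorem, which I checked goes through using $\binom{k}{j}\binom{j}{m}=\binom{k}{m}\binom{k-m}{j-m}$ and $\frac{1}{x-1}-\frac{1}{x(x-1)}=\frac{1}{x}$. In short: same destination and same lifting mechanism, but a more elementary derivation of the functional equation and a substantially cleaner treatment of the $z\to1$ limit.
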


Specializing $k=0$ in Theorem \ref{Thm 3term-curious} and integrating the resulting equation with respect to $x$ produces Zagier's three-term relation for the Herglotz-Zagier function $F(x)$ given in \eqref{fe1}. Due to the presence of terms involving logarithm in front of the combination of $\Phi'_{j+1}(x)$, for $k\geq1$, we are unable to integrate \eqref{curious 3 term} with respect to $x$ to lead us to the three-term relation for the Ishibashi functions themselves. 

	\begin{proof}[Theorem \textup{\ref{Thm 3term-curious}}][]
	The first part of the proof is similar to that in \cite[Section 7]{zagier} and hence we will be brief.
	For Re$(z)>1$ and $x>0$, define $A_z(x,s)$ to be
	\begin{align}\label{A def}
		A_z(x,s):= \frac{1}{\Gamma(z)} \int_{0}^{\infty} \frac{t^{z-1+s}}{(e^{t}-1)(e^{xt}-1)} \, dt.
	\end{align}
	This integral converges for  $\textup{Re}(s)>2-\textup{Re}(z)$ and is analytic in this region. Note that for Re$(z)>1$, we have \cite[p.~251]{Apostol}
	\begin{align}
		\int_{0}^{\infty} \frac{t^{z-1}}{e^{xt}-1}  \, dt = \frac{\Gamma(z)\zeta(z)}{x^z}. \label{gammazetaintegral with x} 
	\end{align}
	Then, use \eqref{gammazetaintegral with x} with $z$ replaced by $z-1+s$ to get,
	\begin{align*}
		A_z(x,s) - \frac{\Gamma(z-1+s)\zeta(z-1+s)}{\Gamma(z) x^{z-1+s}} = \frac{1}{\Gamma(z)} \int_{0}^{\infty} t^{z-2+s} \left( \frac{t}{e^t-1}-1 \right) \frac{1}{e^{xt}-1} \, dt.
	\end{align*}
Hence $A_z(x,s)$ has a power series around $s=1$ given by
	\begin{align} 
		A_z(x,s) = \frac{\zeta(z)}{x^z} + \frac{1}{\Gamma(z)} \int_{0}^{\infty} t^{z-1} \left( \frac{t}{e^t-1}-1 \right) \frac{1}{e^{xt}-1} \, dt + o(s-1). \label{Az mid}
	\end{align}
	
\noindent
	Differentiating both sides of \eqref{almost integral3} with respect to $x$ and simplifying, we find that
	\begin{align}
		\frac{d}{dx}\Phi(z,x) 
		&= -\frac{1}{\Gamma(z)} \int_{0}^{\infty} t^{z-1} \left( \frac{t}{e^{t}-1}-1 \right) \frac{1}{e^{xt}-1} \, dt - \frac{z \zeta(z+1)}{x^{z+1}}, \label{phi derivative}
	\end{align}
	where we have used \eqref{gammazetaintegral with x} with $z$ replaced by $z+1$.
	Substitute \eqref{phi derivative} in \eqref{Az mid} and let $s\to1$ to obtain
	\begin{align}\label{Az main}
		A_z(x,1) = \frac{\zeta(z)}{x^z} - \frac{z \zeta(z+1)}{x^{z+1}} - \frac{d}{dx}\Phi(z,x).
	\end{align}	
Using 
	\begin{align*}
		\frac{1}{(e^{t}-1)(e^{xt}-1)}
		= \frac{1}{(e^{(x-1)t}-1)(e^{t}-1)} - \frac{1}{(e^{(x-1)t}-1)(e^{xt}-1)} -\frac{1}{(e^{xt}-1)},
		\end{align*}
 in \eqref{A def} and letting $s\to1$, we find that for $x>1$,		
	\begin{align*}
		A_z(x,1) &= A_z(x-1,1) - \frac{1}{x^{z+1}} A_z\left(\frac{x-1}{x},1\right) - \frac{\Gamma(z+1)\zeta(z+1)}{\Gamma(z) x^{z+1}}.
	\end{align*}
	Use \eqref{Az main} to transform the above equation into
	\begin{align}\label{three term in z}
		x^{\frac{z+1}{2}}	\left( G(z,x-1)-G(z,x) \right) =   x^{\frac{-z-1}{2}} G(z, \tfrac{x-1}{x}),
	\end{align}
	where $G(z,x):= \frac{\zeta(z)}{x^z}  - \frac{d}{dx}(\Phi(z,x)).$ 
	Differentiating \eqref{three term in z} $k$ times with respect to $z$, we have
	\begin{align}\label{diff3}
		&x^{\frac{z+1}{2}} \sum_{j=0}^{k} \binom{k}{j} \left(\frac{\log(x)}{2}\right)^{k-j}	\left( \frac{d^j}{dz^j} G(z,x-1)- \frac{d^j}{dz^j}G(z,x) \right)=  x^{\frac{-z-1}{2}} \sum_{j=0}^{k} \binom{k}{j} \left(\frac{-\log(x)}{2}\right)^{k-j} \frac{d^j}{dz^j} G\left(z, \frac{x-1}{x}\right) .  
	\end{align}
	We can see that for $0 \leq j \leq k$,
	\begin{align*}
		\frac{d^j}{dz^j}G(z,x) 
		= \sum_{\ell=0}^{j} \binom{j}{\ell} x^{-z} (-1)^\ell \log^\ell(x) \frac{d^{j-\ell}}{dz^{j-\ell}}\zeta(z)- \frac{d^j}{dz^j}\left(\frac{d}{dx}\Phi(z,x)\right).
	\end{align*}
	Substituting the above representation in \eqref{diff3} and rearranging, we see that for $x>1$,
	\begin{align}\label{afterdiff}
		&x^{\frac{z+1}{2}} \sum_{j=0}^{k} \binom{k}{j} \left(\frac{\log(x)}{2}\right)^{k-j} \sum_{\ell=0}^{j} \binom{j}{\ell} (-1)^\ell\left((x-1)^{-z}  \log^\ell(x-1)-x^{-z}  \log^\ell(x)\right)  \frac{d^{j-\ell}}{dz^{j-\ell}}\zeta(z)\nonumber \\
		&-x^{\frac{-z-1}{2}} \sum_{j=0}^{k} \binom{k}{j} \left(\frac{-\log(x)}{2}\right)^{k-j} \sum_{\ell=0}^{j} \binom{j}{\ell} (-1)^\ell  \left(\frac{x-1}{x}\right)^{-z}  \log^\ell\left(\frac{x-1}{x}\right)\frac{d^{j-\ell}}{dz^{j-\ell}}\zeta(z) \nonumber\\
		&+ x^{\frac{z+1}{2}} \sum_{j=0}^{k} \binom{k}{j} \left(\frac{\log(x)}{2}\right)^{k-j} \left(\frac{d^j}{dz^j}\left(\frac{d}{dx}\Phi(z,x)\right)- \frac{d^j}{dz^j}\left(\frac{d}{dx}\Phi(z,x-1)\right)\right) \nonumber\\
		& + x^{\frac{-z-1}{2}} \sum_{j=0}^{k} \binom{k}{j} \left(\frac{-\log(x)}{2}\right)^{k-j} \frac{d^j}{dz^j}\left(\frac{d}{dx}\Phi\left(z,\frac{x-1}{x}\right)\right)=0.
	\end{align}
	Note that Lemma \ref{power series coefficient of hhf} implies that for $j \geq 0$,
	\begin{align}\label{phiderivative}
		\lim_{z \to 1}\frac{d^j}{dz^j}\left(\frac{d}{dx}\Phi(z,x)\right)
		= (-1)^{j+1} \sum_{n=1}^{\infty} \left(  \psi_j'(nx)- \frac{\log^j(nx)}{nx}\right) = \frac{(-1)^{j+1}}{j+1} \Phi'_{j+1}(x),
	\end{align}
Hence, from \eqref{afterdiff} and \eqref{phiderivative}, for $x>1$,
	\begin{align}\label{withLHS}
		&		x \sum_{j=0}^{k} \binom{k}{j}  \frac{(-1)^{j}}{j+1}  \left(\frac{\log(x)}{2}\right)^{k-j} \hspace{-3mm}\left(  \Phi'_{j+1}(x)-  \Phi'_{j+1}(x-1)\right) + \frac{1}{x} \sum_{j=0}^{k} \binom{k}{j}  \frac{(-1)^{j}}{j+1}  \left(\frac{-\log(x)}{2}\right)^{k-j} \hspace{-2mm}\Phi'_{j+1}\left(\frac{x-1}{x}\right)  \nonumber\\
		& = \lim_{z \to 1} \Bigg[x^{\frac{z+1}{2}} \sum_{j=0}^{k} \binom{k}{j} \left(\frac{\log(x)}{2}\right)^{k-j} \sum_{\ell=0}^{j} \binom{j}{\ell} (-1)^\ell \left((x-1)^{-z}  \log^\ell(x-1)-x^{-z}  \log^\ell(x)\right) \frac{d^{j-\ell}}{dz^{j-\ell}}\zeta(z)\nonumber \\
		&\qquad\qquad-x^{\frac{-z-1}{2}} \sum_{j=0}^{k} \binom{k}{j} \left(\frac{-\log(x)}{2}\right)^{k-j} \sum_{\ell=0}^{j} \binom{j}{\ell} (-1)^\ell  \left(\frac{x-1}{x}\right)^{-z}  \log^\ell\left(\frac{x-1}{x}\right)\frac{d^{j-\ell}}{dz^{j-\ell}}\zeta(z) \Bigg].
	\end{align}
	Our next task is evaluate the above limit which we denote by $L$. Observe that 
	\begin{align*}
	L& = x \lim_{z \to 1} \sum_{j=0}^{k} \binom{k}{j} \left(\frac{\log(x)}{2}\right)^{k-j} \sum_{\ell=0}^{j} \binom{j}{\ell} (-1)^{\ell} \frac{d^{j-\ell}}{dz^{j-\ell}}\zeta(z) \nonumber \\
		&\hspace{2cm}\times \left\{(x-1)^{-z}  \log^{\ell}(x-1)-x^{-z}  \log^{\ell}(x)-(-1)^{k-j}\frac{(x-1)^z}{x} \log^{\ell}\left(\frac{x-1}{x}\right)\right\}.
	\end{align*}
Add and subtract $\frac{(-1)^{j-\ell+1}(j-\ell)!}{(z-1)^{j-\ell+1}}$ from $\frac{d^{j-\ell}}{dz^{j-\ell}}\zeta(z)$, and use the fact that
	$
		\lim_{z \to 1}\left\{ \zeta^{(m)}(z) - \frac{(-1)^m m!}{(z-1)^{m+1}}\right\}= (-1)^m\gamma_m$
	for $m\geq0$, to arrive at 
	\begin{align}\label{beforelimit}
		L&	= x  \sum_{j=0}^{k} \binom{k}{j} \left(\frac{\log(x)}{2}\right)^{k-j} \sum_{\ell=0}^{j} \binom{j}{\ell} (-1)^{j+1} \gamma_{j-\ell} \left\{\frac{\log^\ell(x)}{x}-\frac{\log^\ell(x-1)}{x-1}+\frac{(-1)^{k-j}\log^\ell(\tfrac{x-1}{x})}{x (x-1)}\right\} \nonumber \\
		&\quad+	 x \lim_{z \to 1}\frac{ S(z,x)}{(z-1)^{k+1}},
	\end{align}
	where 
	\begin{align}\label{S(z,x)}
		S(z,x):=&	\sum_{j=0}^{k} \binom{k}{j} \left(\frac{\log(x)}{2}\right)^{k-j} \sum_{\ell=0}^{j} \binom{j}{\ell} (-1)^{j} \ell! (z-1)^{k-\ell} \nonumber \\
		&\hspace{1cm} \times \left\{(x-1)^{-z}  \log^{j-\ell}(x-1)-x^{-z}  \log^{j-\ell}(x)-(-1)^{k-j}\frac{(x-1)^{-z}}{x} \log^{j-\ell}\left(\frac{x-1}{x}\right)\right\}.
	\end{align}
	Now we show that $S(z,x)$ has a zero of order $k$ at $z=1$.
	Let $0 \leq r \leq k$. Interchange the order of summation in \eqref{S(z,x)} and then differentiate $r$ times with respect to $z$ using the Leibnitz rule to get
	\begin{align}\label{r times}
		&\frac{d^r}{dz^r}(S(z,x))= k!\sum_{i=0}^{r} \binom{r}{i}	\sum_{\ell=0}^{k}  \sum_{j=\ell}^{k}  \left(\frac{\log(x)}{2}\right)^{k-j} \hspace{-4mm}\frac{(-1)^{j}}{(k-j)!(j-\ell)!} \left(\frac{d^{r-i}}{dz^{r-i}}(z-1)^{k-\ell}\right) \bigg\{ \frac{(-1)^{i+1}}  {x^{z}} \log^{j-\ell+i}(x) \nonumber \\
		&\hspace{3cm} +\frac  {(-1)^i}{(x-1)^{z} } \log^{j-\ell+i}(x-1) -\frac{(-1)^{k-j+i}}{x(x-1)^z} \log^{j-\ell}\left(\frac{x-1}{x}\right)\log^{i}(x-1)\bigg\}.
	\end{align}
	Note that $\displaystyle \lim_{z \to 1}\frac{d^{r-i}}{dz^{r-i}}(z-1)^{k-\ell}= (r-i)!$ if $k-\ell =r-i$ and is $0$ otherwise. Therefore, we can write
	\begin{align*}
		\lim_{z \to 1}	\frac{d^r}{dz^r}(S(z,x))&= k!\sum_{i=0}^{r} \binom{r}{i} \sum_{j=k-r+i}^{k}  \left(\frac{\log(x)}{2}\right)^{k-j} \frac{(-1)^{j} (r-i)!}{(k-j)!(j-k+r-i)!} \bigg\{\frac{(-1)^{i+1}}{x} \log^{j-k+r}(x)   \nonumber \\
		&\quad+ \frac{(-1)^i}{x-1}\log^{j-k+r}(x-1)-\frac{(-1)^{k-j+i}}{x (x-1)}\log^{j-k+r-i}\left(\frac{x-1}{x}\right)\log^{i}(x-1)\bigg\}.
	\end{align*}
	Substitute $p=j-k+r-i$, and then replace $i$ by $r-i$ to get
	\begin{align*}
		\lim_{z \to 1}	\frac{d^r}{dz^r}(S(z,x))
		&= S_1(x)+S_2(x)+S_3(x),
	\end{align*}
	where
	\begin{align*}
		&S_1(x):=k!\sum_{i=0}^{r} \binom{r}{i} \sum_{p=0}^{i} \binom{i}{p}\frac{(-1)^{p+k+r}}{2^{i-p}(x-1)} \log^{i-p}(x) \log^{p+r-i}(x-1),\\
		&S_2(x):=k!\sum_{i=0}^{r} \binom{r}{i} \sum_{p=0}^{i} \binom{i}{p}\frac{(-1)^{p+k+r+1}}{2^{i-p}x}  \log^{r}(x) ,\\
		&S_3(x):=k!\sum_{i=0}^{r} \binom{r}{i} \sum_{p=0}^{i} \binom{i}{p}\frac{(-1)^{r+k+i+1}}{2^{i-p}x(x-1)} \log^{i-p}(x) \log^{p}\left(\frac{x-1}{x}\right)\log^{r-i}(x-1).
	\end{align*} 
Using the binomial theorem repeated, we see that
	\begin{align*}
		S_1(x)
		= \frac{(-1)^{k+r}k!}{x-1} \left( \frac{\log(x)}{2}\right)^r,\hspace{1.5mm}
		S_2(x)=\frac{(-1)^{k+r+1} k! }{x} \left( \frac{\log(x)}{2}\right)^r,\hspace{1.5mm}
		S_3(x)=\frac{(-1)^{k+r+1}   k! }{x(x-1)}    \left( \frac{\log(x)}{2}\right)^{r} .
	\end{align*}
	Therefore,
	\begin{align*}
		S_1(x)+S_2(x)+S_3(x)= (-1)^{k+r}   k!   \left( \frac{\log(x)}{2}\right)^{r} \left(\frac{1}{x-1}-\frac{1}{x}-\frac{1}{x(x-1)}\right)=0.
	\end{align*}
	Hence, for $0 \leq r \leq k$, $\lim_{z \to 1} \frac{d^r}{dz^r}(S(z,x))=0$. By L'Hospital's rule,
	\begin{align}\label{limitequation}
		\lim_{z \to 1}\frac{ S(z,x)}{(z-1)^{k+1}} = \frac{	\lim_{z \to 1} \frac{d^{k+1}}{dz^{k+1}}(S(z,x))}{(k+1)!}.
	\end{align} 
	We now use \eqref{r times} with $r=k+1$ and note that 
	$\displaystyle \lim_{z \to 1}\frac{d^{{k+1}-i}}{dz^{{k+1}-i}}(z-1)^{k-\ell}= (r-i)!$ if $\ell =i-1, i>0$  and is $0$ otherwise. Therefore,
	\begin{align*}
		\lim_{z \to 1}	\frac{d^{k+1}}{dz^{k+1}}S(z,x)&= k!\sum_{i=1}^{k+1} \binom{k+1}{i} \sum_{j=i-1}^{k}  \left(\frac{\log(x)}{2}\right)^{k-j} \frac{(-1)^{j} (k+1-i)!}{(k-j)!(j+1-i)!}\bigg\{  \frac{(-1)^{i+1}}{x}\log^{j+1}(x)    \nonumber \\
		&\quad+\frac{(-1)^i}{x-1}\log^{j+1}(x-1)-\frac{(-1)^{k-j+i}}{x (x-1)}\log^{j-i+1}\left(\frac{x-1}{x}\right)\log^{i}(x-1)\bigg\}.
	\end{align*}
	Substitute $p=j+1-i$ and then add and subtract $i=0$ term to get
	\begin{align*}
		&\lim_{z \to 1}	\frac{d^{k+1}}{dz^{k+1}}S(z,x)\nonumber\\
		&= k!\sum_{i=0}^{k+1} \binom{k+1}{i} \sum_{p=0}^{k+1-i}  \left(\frac{\log(x)}{2}\right)^{k+1-i-p}  (-1)^{p-1+i}\binom{k+1-i}{p}  \nonumber \\
		 &\quad\times\bigg\{ \frac{(-1)^{i+1}}{x}\log^{p+i}(x)+\frac{(-1)^i}{x-1}\log^{p+i}(x-1)+\frac{(-1)^{k-p}}{x (x-1)}\log^{p}\left(\frac{x-1}{x}\right)\log^{i}(x-1)\bigg\}\nonumber\\
		&\quad+ k!  \sum_{p=0}^{k+1} \binom{k+1}{p} \left(\frac{\log(x)}{2}\right)^{k+1-p} (-1)^{p}    \left\{\frac{\log^{p}(x-1)}{x-1}-\frac{\log^{p}(x)}{x}+\frac{(-1)^{k-p}\log^{p}(\tfrac{x-1}{x})}{x (x-1)}\right\}.
	\end{align*}
	The double sum on the right hand side can be shown to be zero using the binomial theorem as done before. Again using the binomial theorem in the second sum, we get
	\begin{align}\label{afterlimit}
		\lim_{z \to 1}	\frac{d^{k+1}}{dz^{k+1}}S(z,x)
		=\frac{k! (-1)^k}{x} \left(\frac{\log(x)}{2}\right)^{k+1}+\frac{k!}{x} \left(\frac{\log(x)}{2}-\log(x-1)\right)^{k+1}.
	\end{align}
	Substituting \eqref{afterlimit}, in \eqref{limitequation}, and  the resulting equation, in turn, in \eqref{beforelimit}, we see that 
	\begin{align*}
	L&	= x  \sum_{j=0}^{k} \binom{k}{j} \left(\frac{\log(x)}{2}\right)^{k-j} \sum_{\ell=0}^{j} \binom{j}{\ell} (-1)^{j} \gamma_{j-\ell} \left\{\frac{\log^\ell(x)}{x}-\frac{\log^\ell(x-1)}{x-1}+\frac{(-1)^{k-j}\log^\ell(\tfrac{x-1}{x})}{x (x-1)}\right\} \nonumber \\
	&\quad+\frac{1}{k+1}\left\{(-1)^k \left(\frac{\log(x)}{2}\right)^{k+1}+ \left(\frac{\log(x)}{2}-\log(x-1)\right)^{k+1}\right\}.
	\end{align*}
	Substitute this limit evaluation in \eqref{withLHS} and simplify to see that the proof is complete.
\end{proof}

		\subsection{The function $J(z, x)$ and its connection with $\Phi(z, x)$}
		For Re$(z)>0$ and $x>0$, define $J(z, x)$ by
		\begin{align}
			J(z,x) := -\frac{1}{\Gamma(z)} \int_{0}^{1} \frac{(\log(\tfrac{1}{y}))^{z-1}\log(1+y^x)}{1+y} \, dy. \label{J(z,x) def}
		\end{align}
		Clearly, $J(1, x)=-J(x)$, where $J(x)$ is defined in \eqref{JT-def}. 
		In Section \ref{integral-J}, we work with a more general integral, namely, $J_{r, \chi_1, \chi_2}(z, x)$ of which $J(z, x)$ is a special case. Since we prove the analyticity of this more general integral there, we refrain here from proving it separately for $J(z, x)$ at this juncture. 
		
		We now derive an identity which connects $J(z,x)$ with the Herglotz-Hurwitz function $\Phi(z, x)$.
		\begin{theorem}\label{J(z,x) F_z(x) relation}
			For $x>0$ and \textup{Re}$(z)>1$, we have
			\begin{align}\label{JF-rel}
				J(z,x) =\Phi(z,2x) -(2^{1-z}+1)  \Phi(z,x) + 2^{1-z}\Phi(z,\tfrac{x}{2})  + \frac{(2^{-z}-1)}{x^{z}} \zeta(z+1).
			\end{align}
		\end{theorem}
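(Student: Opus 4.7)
[Proof sketch]
The plan is to convert $J(z,x)$ into an integral over $(0,\infty)$ and then reduce it to a linear combination of integrals of the form
\begin{align*}
K(z,a):=\frac{1}{\Gamma(z)}\int_{0}^{\infty}\frac{t^{z-1}\log(1-e^{-at})}{e^{t}-1}\,dt,\qquad a>0,
\end{align*}
each of which can be evaluated in closed form in terms of $\Phi(z,\cdot)$.

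\textbf{Step 1.} First I substitute $y=e^{-t}$ in \eqref{J(z,x) def}, which immediately gives
\begin{align*}
J(z,x)=-\frac{1}{\Gamma(z)}\int_{0}^{\infty}\frac{t^{z-1}\log(1+e^{-xt})}{e^{t}+1}\,dt.
\end{align*}
Next I apply the two identities
\begin{align*}
\log(1+e^{-xt})=\log(1-e^{-2xt})-\log(1-e^{-xt}),\qquad \frac{1}{e^{t}+1}=\frac{1}{e^{t}-1}-\frac{2}{e^{2t}-1},
\end{align*}
to split $J(z,x)$ into four integrals, and in each integral containing $e^{2t}-1$ I make the change of variable $t\mapsto t/2$, which rescales the $\log$-argument. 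A short rearrangement reduces everything to
\begin{align*}
J(z,x)=-K(z,2x)+(1+2^{1-z})K(z,x)-2^{1-z}K(z,x/2).
\end{align*}

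\textbf{Step 2.} For the inner computation of $K(z,a)$ (valid for $\textup{Re}(z)>1$), I expand $\log(1-e^{-at})=-\sum_{m\geq 1}e^{-mat}/m$ and $\frac{1}{e^{t}-1}=\sum_{n\geq 1}e^{-nt}$ and integrate term by term (justified by absolute convergence), obtaining
\begin{align*}
K(z,a)=-\sum_{m=1}^{\infty}\frac{1}{m}\sum_{n=1}^{\infty}\frac{1}{(n+ma)^{z}}=-\sum_{m=1}^{\infty}\frac{\zeta(z,1+ma)}{m}.
\end{align*}
Using $\zeta(z,1+ma)=\zeta(z,ma)-(ma)^{-z}$ and the definition \eqref{hhf-def} of $\Phi(z,x)$, I then get the clean identity
\begin{align*}
K(z,a)=-\Phi(z,a)-\frac{a^{1-z}}{z-1}\zeta(z)+a^{-z}\zeta(z+1).
\end{align*}

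\textbf{Step 3.} Finally, I substitute this evaluation of $K(z,a)$ into the three-term decomposition of Step 1 for $a\in\{2x,x,x/2\}$. The coefficients of $\Phi(z,\cdot)$ are immediately seen to match the right-hand side of \eqref{JF-rel}. The coefficient of $\zeta(z)/(z-1)$ is
\begin{align*}
(2x)^{1-z}-(1+2^{1-z})x^{1-z}+x^{1-z}=0,
\end{align*}
so this piece drops out, and the coefficient of $\zeta(z+1)$ collapses to $x^{-z}(2^{-z}-1)$, as desired. No single step looks hard; the only real thing to watch is the bookkeeping of the $\zeta(z)$--cancellation in Step 3, together with the routine check that the order of summation and integration can be interchanged in Step 2 for $\textup{Re}(z)>1$.
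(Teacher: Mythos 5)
Your proposal is correct and follows essentially the same route as the paper: the substitution $y=e^{-t}$, the identities $\log(1+e^{-xt})=\log(1-e^{-2xt})-\log(1-e^{-xt})$ and $\frac{1}{e^t+1}=\frac{1}{e^t-1}-\frac{2}{e^{2t}-1}$, and the rescaling $t\mapsto t/2$ are exactly the paper's steps, and your $\zeta(z)$- and $\zeta(z+1)$-bookkeeping checks out. The only (cosmetic) difference is that you evaluate the unregularized integral $K(z,a)$ through the series definition \eqref{hhf-def} of $\Phi$, so the $\zeta(z)$ terms appear and cancel explicitly, whereas the paper inserts $\pm\frac{1}{t}$ to produce the kernel $\frac{1}{1-e^{-t}}-\frac{1}{t}$ and invokes the integral representation \eqref{almost integral3} of $\Phi$ directly, so those terms never arise.
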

		\begin{proof}
			Employing the change of variable $y=e^{-t}$, we see that
			\begin{align*}
				J(z,x) &= -\frac{1}{\Gamma(z)} \int_{0}^{\infty}  \frac{t^{z-1} \log(1+e^{-xt})}{e^t+1} \, dt\\
				&= -\frac{1}{\Gamma(z)} \int_{0}^{\infty}  t^{z-1} \log(1+e^{-xt}) \left( \frac{1}{e^t-1} - \frac{1}{t} \right) \, dt  +\frac{1}{\Gamma(z)} \int_{0}^{\infty}  t^{z-1} \log(1+e^{-xt}) \left( \frac{2}{e^{2t}-1} - \frac{1}{t}  \right) \, dt.
			\end{align*}
			Replace $t$ by $t/2$ in the second integral and then combine the two integrals to get
			\begin{align}
				J(z,x) &=-\frac{1}{\Gamma(z)} \int_{0}^{\infty}  t^{z-1}  \left( \frac{1}{e^t-1} - \frac{1}{t} \right) \left(  \log(1+e^{-xt}) - 2^{1-z} \log(1+e^{\frac{-xt}{2}}) \right) \, dt \notag\\
				&= -I_3(z,x) +I_4(z,x) \label{halfway},
			\end{align}
			where
			\begin{align*}
				I_3(z,x)&:= \frac{1}{\Gamma(z)}\int_{0}^{\infty}  t^{z-1}  \left( \frac{1}{1-e^{-t}} - \frac{1}{t} \right) \left(  \log(1+e^{-xt}) - 2^{1-z} \log(1+e^{\frac{-xt}{2}}) \right) \, dt , \\
				I_4(z,x)&:= \frac{1}{\Gamma(z)} \int_{0}^{\infty}  t^{z-1}   \left(  \log(1+e^{-xt}) - 2^{1-z} \log(1+e^{\frac{-xt}{2}}) \right) \, dt .
			\end{align*}
			Use $\log(1+e^{w})=\log(1-e^{2w}) -  \log(1-e^{w})$ with $w=-xt/2$, add and subtract $2^{z-1} \log(1-e^{-xt})$ and again use the aforementioned identity with $w=-xt$ to get
			\begin{align*}
				I_3(z,x) &= \frac{1}{\Gamma(z)}\int_{0}^{\infty}  t^{z-1}  \left( \frac{1}{1-e^{-t}} - \frac{1}{t} \right) \left(  \log(1-e^{-2xt}) -(2^{1-z}+1)\log(1-e^{-xt}) + 2^{1-z} \log(1-e^{\frac{-xt}{2}}) \right) \, dt .
			\end{align*}
			Invoking \eqref{almost integral3} three times, it can be seen that
			\begin{align}
				I_3(z,x) 
				= - \left(  \Phi(z,2x) - (2^{1-z}+1)  \Phi(z,x) + 2^{1-z}\Phi(z,\tfrac{x}{2})  \right) \label{I3 final}.
			\end{align}
			Simplifying the integrand of $I_4(z, x)$ analogously to that done for $I_3(z, x)$, we obtain
			\begin{align}
				I_4(z,x) &=  \frac{1}{\Gamma(z)}\int_{0}^{\infty}  t^{z-1}  \left(  \log(1-e^{-2xt}) -(2^{1-z}+1)\log(1-e^{-xt}) + 2^{1-z} \log(1-e^{\frac{-xt}{2}}) \right) \, dt\nonumber\\
				&=(-(2x)^{-z}  \zeta(z+1)) -(2^{1-z}+1)( -x^{-z}\zeta(z+1)) -2^{1-z} (\tfrac{x}{2})^{-z}  \zeta(z+1) \notag\\  
				&= x^{-z}(2^{-z}-1)  \zeta(z+1) \label{I4 final},
			\end{align}
	  	 	where, in the second step above, we used the well-known result $\int_{0}^{\infty} u^{z-1} \log(1-e^{-u}) \, du = -\Gamma(z)\zeta(z+1)$.
	  	 	Substitute \eqref{I3 final} and \eqref{I4 final} in \eqref{halfway}, we arrive at \eqref{JF-rel}.
	  \end{proof}
 The above theorem gives the result of Radchenko and Zagier \cite[Equation (1.1)]{raza} as a corollary.
	  \begin{corollary}
	  	The relation \eqref{zaginteval} holds.
	  \end{corollary}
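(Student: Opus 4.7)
The plan is simply to specialize $z=1$ in Theorem \ref{J(z,x) F_z(x) relation}. The analyticity of $J(z,x)$ on $\textup{Re}(z)>0$ (to be established in Section \ref{integral-J} for the more general $J_{r,\chi_1,\chi_2}(z,x)$) and the removable singularity of $\Phi(z,x)$ at $z=1$ together with the analyticity of $\zeta(z+1)$ at $z=1$ ensure that both sides of \eqref{JF-rel} extend analytically across $z=1$, so the limit may be taken termwise.

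First, from the definitions \eqref{J(z,x) def} and \eqref{JT-def}, $J(1,x)=-J(x)$. Next, the $j=0$ case of Lemma \ref{power series coefficient of hhf} gives
\begin{equation*}
\lim_{z\to 1}\Phi(z,y)=-\Phi_1(y)=-F(y),
\end{equation*}
for each $y\in\{2x,x,x/2\}$. The remaining factors are elementary: at $z=1$, $2^{1-z}+1=2$, $2^{1-z}=1$, and
\begin{equation*}
\lim_{z\to 1}\frac{(2^{-z}-1)}{x^{z}}\zeta(z+1)=-\frac{1}{2x}\cdot\frac{\pi^2}{6}=-\frac{\pi^2}{12x}.
\end{equation*}

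Substituting these into \eqref{JF-rel} yields
\begin{equation*}
-J(x)=-F(2x)+2F(x)-F\left(\tfrac{x}{2}\right)-\frac{\pi^2}{12x},
\end{equation*}
and multiplying through by $-1$ gives precisely \eqref{zaginteval}. No essential obstacle arises; the only point meriting care is the passage to the limit, which is justified by the analyticity statements above.
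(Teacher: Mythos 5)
Your proof is correct and follows exactly the paper's own route: let $z\to1$ in Theorem \ref{J(z,x) F_z(x) relation}, apply Lemma \ref{power series coefficient of hhf} with $j=0$ to replace each $\Phi(1,y)$ by $-F(y)$, use $J(1,x)=-J(x)$, and evaluate the elementary limits. The computation and the justification of the limit passage are both in order.
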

	  \begin{proof}
	  	Let $z\to1$ in Theorem \ref{J(z,x) F_z(x) relation} and employ Lemma \ref{power series coefficient of hhf} with $j=0$ to simplify the resulting equation along with the fact that $J(1, x)=-J(x)$.
	  \end{proof}
	  	 Another application of Theorem \ref{J(z,x) F_z(x) relation} lies in obtaining a nice functional equation for $J(z, x)$ which also  involves the alternating Mordell-Tornheim zeta function with the extra parameter $x$. Before stating this result, we first derive an identity which will be used to get the aforementioned functional equation for $J(z, x)$, and which is interesting in its own right since it connects $\Theta(z, x)$ with the  aforementioned alternating double series.
	  	 \begin{theorem}\label{J-lemma}
	  	 Let $\Theta(z, x)$ be defined in \eqref{Thetadef}. For \textup{Re}$(z)>1$ and $x>0$,
	  	\begin{align}\label{Theta identity}
	  		&  \Theta(z, 2x)  - (2^{1-z}+1) \Theta(z, x) + 2^{1-z} \Theta\left(z, \frac{x}{2}\right)=-\sum_{n=1}^{\infty} \sum_{m=1}^{\infty} \frac{(-1)^{n+m}}{nm(n+mx)^{z-1}}.
	  	\end{align}
	  	\end{theorem}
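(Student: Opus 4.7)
The plan is to prove this identity by a parity decomposition of the double sum defining $\Theta(z,x)$, and reducing every piece back to a rescaled $\Theta$. Absolute convergence for $\operatorname{Re}(z)>1$ will justify all rearrangements, so no analytic subtleties appear.

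The starting point is the elementary identity
\begin{align*}
 (-1)^{n+m} \;=\; 4\,\mathbf{1}_{2\mid n}\mathbf{1}_{2\mid m}\;-\;2\,\mathbf{1}_{2\mid n}\;-\;2\,\mathbf{1}_{2\mid m}\;+\;1,
\end{align*}
valid for all positive integers $n,m$. Multiplying by $\frac{1}{nm(n+mx)^{z-1}}$ and summing gives
\begin{align*}
 \sum_{n,m\geq 1}\frac{(-1)^{n+m}}{nm(n+mx)^{z-1}}
 \;=\; 4A\;-\;2B\;-\;2C\;+\;\Theta(z,x),
\end{align*}
where $A$, $B$, $C$ denote the subsums with both indices even, with $n$ even (and $m$ arbitrary), and with $m$ even (and $n$ arbitrary), respectively.

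Each of $A,B,C$ collapses to a rescaled $\Theta$ after a trivial change of variable. Substituting $n=2n'$, $m=2m'$ in $A$ and pulling out $2^{-z-1}$ from the factors $(2n')(2m')(2n'+2m'x)^{z-1}$ yields $A=2^{-z-1}\Theta(z,x)$. Similarly, $n=2n'$ alone in $B$ gives $B=2^{-z}\Theta(z,x/2)$ (using $(2n'+mx)^{z-1}=2^{z-1}(n'+mx/2)^{z-1}$), and $m=2m'$ alone in $C$ gives $C=\tfrac12\Theta(z,2x)$. Plugging these three evaluations into the displayed expansion produces
\begin{align*}
 \sum_{n,m\geq 1}\frac{(-1)^{n+m}}{nm(n+mx)^{z-1}}
 \;=\; 2^{1-z}\Theta(z,x)\;-\;2^{1-z}\Theta\!\left(z,\tfrac{x}{2}\right)\;-\;\Theta(z,2x)\;+\;\Theta(z,x),
\end{align*}
which, after multiplying by $-1$ and regrouping, is precisely \eqref{Theta identity}.

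There is no real obstacle here: once the parity decomposition is set up, the proof amounts to three one-line rescalings. The only thing to verify carefully is that the rearrangements of the doubly infinite series are justified, but this is immediate from the absolute convergence of $\Theta(z,x)$ for $\operatorname{Re}(z)>1$. It is worth noting that this short proof makes transparent why the specific combination $\Theta(z,2x)-(2^{1-z}+1)\Theta(z,x)+2^{1-z}\Theta(z,x/2)$ appears on the left: the coefficients $1$, $-(2^{1-z}+1)$, $2^{1-z}$ are exactly those needed to realize the parity operator $n,m\mapsto(-1)^{n+m}$ on the summand.
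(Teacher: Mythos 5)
Your proof is correct, and it takes a genuinely different route from the paper's. The paper proves this identity by first writing $\Theta(z,x)=\frac{1}{\Gamma(z-1)}\int_{0}^{\infty}y^{z-2}\log(1-e^{-y})\log(1-e^{-xy})\,dy$, combining the three integrals via the identity $\log(1+e^{-w})=\log(1-e^{-2w})-\log(1-e^{-w})$ and a change of variable $y=2t$, and then expanding $\log(1+e^{-y})\log(1+e^{-xy})$ as a double series — a step whose term-by-term integration the authors justify by appealing to an interchange theorem together with Watson's lemma. You bypass the integral representation entirely: the parity decomposition $(-1)^{n+m}=4\,\mathbf{1}_{2\mid n}\mathbf{1}_{2\mid m}-2\,\mathbf{1}_{2\mid n}-2\,\mathbf{1}_{2\mid m}+1$ reduces everything to three rescalings of the absolutely convergent double sum (your evaluations $A=2^{-z-1}\Theta(z,x)$, $B=2^{-z}\Theta(z,x/2)$, $C=\tfrac12\Theta(z,2x)$ all check out against the convention that $x$ multiplies $m$ in \eqref{Thetadef}), and absolute convergence for $\operatorname{Re}(z)>1$ is all that is needed. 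Your argument is shorter and more elementary, and it explains conceptually where the coefficients $1$, $-(2^{1-z}+1)$, $2^{1-z}$ come from. What the paper's route buys in exchange is the intermediate integral formula $-\frac{1}{\Gamma(z-1)}\int_{0}^{\infty}\log(1+e^{-y})\log(1+e^{-xy})\,y^{z-2}\,dy$ for the left-hand side, which the authors reuse in the remark following the theorem to derive an identity for $\int_{0}^{\infty}t^{z-2}\textup{Li}_2\bigl(\tfrac{e^{-t}}{e^{-t}+1}\bigr)\,dt$; that byproduct is not available from your series-only argument.
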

	  	\begin{proof}
	  	Starting with the integral representation
	  	\begin{align*}
	  		\frac{\Gamma(z-1)}{(n+mx)^{z-1}}= \int_{0}^{\infty} y^{z-2} e^{-(n+mx)y} \, dy,
	  	\end{align*}
	  	it is not difficult to see that
	  	\begin{align*}
	  		\Theta(z,x) =\frac{1}{\Gamma(z-1)} \int_{0}^{\infty} y^{z-2}\log(1-e^{-y}) \log(1-e^{-xy})  \, dy. 
	  	\end{align*}	   
	  	Using this representation for each of $\Theta$-functions on the left-hand side of \eqref{Theta identity}, we have
	  	\begin{align}
	  		&  \Theta(z, 2x)  - (2^{1-z}+1) \Theta(z, x) + 2^{1-z} \Theta\left(z, \frac{x}{2}\right)\nonumber\\
	  		&= \frac{1}{\Gamma(z-1)} \left(   \int_{0}^{\infty} \log(1-e^{-y}) \log(1-e^{-2xy}) y^{z-2}  \, dy \right.\notag \\
	  		& \hspace{2cm} \left. - (2^{1-z}+1)\int_{0}^{\infty} \log(1-e^{-y}) \log(1-e^{-xy}) y^{z-2}  \, dy +2^{1-z}\int_{0}^{\infty} \log(1-e^{-y}) \log(1-e^{-\tfrac{xy}{2}}) y^{z-2}  \, dy      \right) \notag \\
	  		&= \frac{1}{\Gamma(z-1)} \left(   \int_{0}^{\infty} \log(1-e^{-y}) \log(1+e^{-xy}) y^{z-2}  \, dy -2^{1-z} \int_{0}^{\infty} \log(1-e^{-y}) \log(1+e^{-\tfrac{xy}{2}}) y^{z-2}  \, dy \right). \label{integral almost}
	  	\end{align}
	  	Now make a change of variable $y=2t$ in the second integral on the right-hand side of \eqref{integral almost} to get
	  	\begin{align}\label{done almost}
	  		\Theta(z, 2x)  - (2^{1-z}+1) \Theta(z, x) + 2^{1-z} \Theta\left(z, \frac{x}{2}\right)= -\frac{1}{\Gamma(z-1)}\int_{0}^{\infty} \log(1+e^{-y}) \log(1+e^{-xy}) y^{z-2}  \, dy.
	  	\end{align}
	  	We now write $\log(1+e^{-y})=\sum_{n=1}^{\infty}(-1)^{n-1} e^{-ny}/n $ and  $\log(1+e^{-xy})=\sum_{m=1}^{\infty}(-1)^{m-1} e^{-mxy}/m $ and interchange the order of double sum and integration. This is justified by using the standard theorem on interchange of the order of summation and integration \cite[p.~30, Theorem 2.1]{temme} beginning with the sum over $n$, then using Watson's lemma \cite[p.~32, Theorem 2.4]{temme}, followed by another application of  \cite[p.~30, Theorem 2.1]{temme}, this time for the sum over $m$. Thus, the right-hand side of \eqref{done almost} becomes
	  	\begin{align}\label{to be referred}
	  	-\frac{1}{\Gamma(z-1)}\int_{0}^{\infty} \log(1+e^{-y}) \log(1+e^{-xy}) y^{z-2}  \, dy&=-\frac{1}{\Gamma(z-1)}\sum_{n=1}^{\infty} \sum_{m=1}^{\infty} \frac{(-1)^{n+m}}{nm} \int_{0}^{\infty}  e^{-(n+mx)y} y^{z-2}  \, dy\nonumber\\
	  	&=-\sum_{n=1}^{\infty} \sum_{m=1}^{\infty} \frac{(-1)^{n+m}}{nm(n+mx)^{z-1}}.
	  	\end{align}
	  Thus \eqref{Theta identity} follows from \eqref{done almost} and \eqref{to be referred}.
	  	\end{proof}
	  	\begin{remark}
	  	Letting $x=1$ in Theorems \ref{theta-3t} and \eqref{J-lemma} give two different representations for $(2^{1-z}+1)\Theta(z, 1)-\Theta(z, 2)-2^{1-z}\Theta(z, 1/2)$ which must equal each other. This leads us to the identity
	  	\begin{align*}
	  \int_{0}^{\infty}t^{z-2}\textup{Li}_2\left(\frac{e^{-t}}{e^{-t}+1}\right)\, dt=-\Gamma(z-1)\left\{(2^{-z}-1)\zeta(z+1)+\frac{1}{2}\sum_{n=1}^{\infty}\sum_{m=1}^{\infty}\frac{(-1)^{n+m}}{nm(n+m)^{z-1}}\right\},
	  	\end{align*}
	  	which can, of course, be obtained by letting $x=1$ in \eqref{to be referred} and using the functional equation \eqref{to be also referred} with $w=-e^{-t}$ while noting that $\int_{0}^{\infty}t^{z-2}\textup{Li}_2(-e^{-t})\, dt=(2^{-z}-1)\Gamma(z-1)\zeta(z+1)$.
	  	\end{remark}
	  	We are now ready to derive a two-term functional equation for $J(z, x)$. 
	  \begin{theorem}\label{new J theorem}
	  	For $x>0$ and \textup{Re}$(z)>1$,
	  	\begin{align}\label{J-functional}
	  		J(z,x)+x^{1-z}J\left(z,\frac{1}{x}\right) = -\sum_{n=1}^{\infty} \sum_{m=1}^{\infty} \frac{(-1)^{m+n}}{mn (n+mx)^{z-1}}.
	  	\end{align}
	  \end{theorem}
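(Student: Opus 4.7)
The plan is to assemble the desired identity by chaining together three earlier results: Theorem \ref{J(z,x) F_z(x) relation}, which writes $J(z,x)$ as a linear combination of $\Phi(z,2x)$, $\Phi(z,x)$, $\Phi(z,x/2)$ plus an explicit correction; Theorem \ref{decompostion theorem}, the two-term functional equation for $\Phi(z,\cdot)$; and Theorem \ref{J-lemma}, which expresses the combination $\Theta(z,2x)-(2^{1-z}+1)\Theta(z,x)+2^{1-z}\Theta(z,x/2)$ as minus the alternating double series on the right of \eqref{J-functional}.

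First I would apply Theorem \ref{J(z,x) F_z(x) relation} twice (once to $J(z,x)$ and once to $J(z,1/x)$) and form $J(z,x)+x^{1-z}J(z,1/x)$. The terms group naturally into three $\Phi$-pairs
\begin{align*}
B_1 &:= \Phi(z,2x)+x^{1-z}\Phi(z,2/x),\\
B_2 &:= \Phi(z,x)+x^{1-z}\Phi(z,1/x),\\
B_3 &:= \Phi(z,x/2)+x^{1-z}\Phi(z,1/(2x)),
\end{align*}
yielding $J(z,x)+x^{1-z}J(z,1/x)=B_1-(2^{1-z}+1)B_2+2^{1-z}B_3+(2^{-z}-1)(x^{-z}+x)\zeta(z+1)$. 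The arguments in $B_1$ and $B_3$ are \emph{not} reciprocals of each other, so the two-term functional equation for $\Phi$ does not apply to them individually.

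The key observation that sidesteps this obstacle is that while $B_1$ and $B_3$ cannot be evaluated separately, the particular combination $B_1+2^{1-z}B_3$ can. Applying Theorem \ref{decompostion theorem} with $a=2x$ and $a=x/2$ produces closed forms for
\begin{align*}
A_1 &:= \Phi(z,2x)+(2x)^{1-z}\Phi(z,1/(2x)),\\
A_3 &:= \Phi(z,x/2)+(x/2)^{1-z}\Phi(z,2/x),
\end{align*}
and a quick check using $(2x)^{1-z}=2^{1-z}x^{1-z}$ and $2^{1-z}(x/2)^{1-z}=x^{1-z}$ gives $A_1+2^{1-z}A_3=B_1+2^{1-z}B_3$. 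Meanwhile $B_2$ coincides with $A_2:=\Phi(z,x)+x^{1-z}\Phi(z,1/x)$, which is handled directly by the decomposition theorem with $a=x$.

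It then remains to collect the elementary terms produced by evaluating $A_1-(2^{1-z}+1)A_2+2^{1-z}A_3$ via Theorem \ref{decompostion theorem}, verify that the contributions involving $\zeta(z)/(z-1)$ cancel (the coefficient reduces, via an identity like $1+(2x)^{1-z}+2^{1-z}+x^{1-z}=(2^{1-z}+1)(1+x^{1-z})$, to zero), and check that the residual $\zeta(z+1)$-contributions combine with the correction term from Step 1 to cancel as well, leaving precisely $\Theta(z,2x)-(2^{1-z}+1)\Theta(z,x)+2^{1-z}\Theta(z,x/2)$. Theorem \ref{J-lemma} then identifies this expression with the right-hand side of \eqref{J-functional}. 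The main obstacle is the bookkeeping in these two cancellations; no new ideas are required beyond the algebraic trick of bundling $B_1$ with $2^{1-z}B_3$.
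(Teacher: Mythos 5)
Your proposal is correct and follows essentially the same route as the paper: apply Theorem \ref{J(z,x) F_z(x) relation} to both $J(z,x)$ and $J(z,1/x)$, regroup into the three reciprocal pairs $\Phi(z,a)+a^{1-z}\Phi(z,1/a)$ for $a=2x,\,x,\,x/2$, apply Theorem \ref{decompostion theorem} three times, and finish with Theorem \ref{J-lemma}. The only difference is that you make explicit the regrouping identity $B_1+2^{1-z}B_3=A_1+2^{1-z}A_3$, which the paper subsumes under the word ``simplifying.''
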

	  \begin{proof}
	  Representing the two instances of the $J$-function in \eqref{J-functional} by means of the right-hand side of \eqref{JF-rel} and simplifying, we see that
	  \begin{align*}
	  	&J(z,x)+x^{1-z}J(z,\tfrac{1}{x}) \notag \\
	  	&=  \left( \Phi(z,2x) + (2x)^{1-z}\Phi(z,\tfrac{1}{2x}) \right) - (2^{1-z}+1) \left( \Phi(z,x) + x^{1-z}\Phi(z,\tfrac{1}{x})  \right) + 2^{1-z} \left( \Phi(z,\tfrac{x}{2})  + \left(\tfrac{x}{2}\right)^{1-z} \Phi(z,\tfrac{2}{x})  \right) \notag\\ 
	  	&\quad+ (2^{-z}-1) (x+x^{-z})\zeta(z+1).
	  	\end{align*}
	  	The result is now obtained by employing Theorem \ref{decompostion theorem} three times followed by an application of Theorem \ref{J-lemma}.
	\end{proof}

	\section{A generalization of the Herglotz-Zagier-Novikov function}\label{hzn}
	
	The Herglotz-Zagier-Novikov function $\mathscr{F}(x;u,v)$ was defined in \eqref{script F def}
	for $u\in\mathbb{C}\backslash(1,\infty), u\neq0, v\in\mathbb{C}\backslash[1,\infty)$ and $x>0$. For the same values of $u$, $v$ and $x$, we define a more general integral $	\mathscr{F}_z(x;u,v) $ by
	\begin{align}
		\mathscr{F}_z(x;u,v) := \frac{1}{\Gamma(z)} \int_{0}^{1} \frac{\left( \log(\tfrac{1}{t})\right)^{z-1} \log(1-ut^x) }{v^{-1}-t} \, dt. \label{script F def z}
	\end{align}
	We first show that this integral is analytic in Re$(z)>0$. This is done by first transforming the integral by means of the change of variable $y=-\log(t)$ so that
	\begin{align*}
			\mathscr{F}_z(x;u,v) =\frac{1}{\Gamma(z)} \int_{0}^{\infty}e^{-y}y^{z-1}\frac{\log(1-ue^{-xy})}{v^{-1}-e^{-y}}\, dy.
	\end{align*}
	Let $f(z, y)$ denote the integrand of the above integral. Clearly, $f$ is continuous in both $z$ and $y$ and for a fixed $y\in(0,1)$, $f$ is analytic in Re$(z)>0$. We need only show that the integral converges uniformly at both the limits. To that end, let $S=\{z: a\leq\textup{Re}(z)\leq A\}$, where $0<a<A<\infty$. Let $0<y\leq 1$. Then $e^{-y}\leq1$ and $|e^{-y}y^{z-1}|\leq y^{a-1}$. If $u\neq 1$, $\frac{\log(1-ue^{-xy})}{v^{-1}-e^{-y}}=O_{u, v}(1)$ as $y\to0$, otherwise it is $O_v(y^{-\epsilon })$ for any $\epsilon>0$ as can be seen from \eqref{bounds}. Thus, in both the cases, $f(z,y)=O_{u, v}\left(y^{a-1-\epsilon}\right)$ as $y\to0$. 
	
	 Now let $y\geq1$. Then $|y^{z-1}|\leq y^{A-1}$. Since $\frac{\log(1-ue^{-xy})}{v^{-1}-e^{-y}}=O(1)$ as $y\to\infty$. Hence we see that the integral converges uniformly at both the limits. From \cite[p.~30-31]{temme}, we then conclude that $\mathscr{F}_z(x;u,v)$ is analytic in Re$(z)>0$.
	 
	 \begin{remark}
	 By the preceding logic, one can also see that $\mathscr{F}_z(x;u,1)$ is defined and analytic in \textup{Re}$(z)>1$. Indeed, this follows from the fact that $\frac{1}{1-e^{-y}}=O\left(\frac{1}{y}\right)$ as $y\to0^{+}$.
	 \end{remark}
	 
	 \begin{lemma} Let \textup{Re}$(z)>1$ and $x>0$. For  $u,v \in \mathbb{C}$ such that $|u|\leq1$ and $|v|\leq1$, $\mathscr{F}_z(x;u,v)$ can be expressed in the form of a doubly infinite series given by
	 	\begin{align}
	 		\mathscr{F}_z(x;u,v) = - \sum_{n=1}^{\infty} \sum_{m=1}^{\infty} \frac{u^m v^n}{m(n+mx)^z}. \label{script F series}
	 	\end{align}
	 \end{lemma}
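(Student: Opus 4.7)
The plan is to substitute the geometric-type expansions
\begin{align*}
\log(1-ut^x) = -\sum_{m=1}^{\infty}\frac{u^m t^{mx}}{m}, \qquad \frac{1}{v^{-1}-t} = \sum_{n=1}^{\infty} v^n t^{n-1},
\end{align*}
both of which hold pointwise on $[0,1)$ when $|u|\le 1$ and $|v|\le 1$, then interchange summation and integration, and finally identify the resulting integral as a gamma integral. Because the doubly-indexed product series $-\sum_{m,n\ge 1} u^m v^n t^{n-1+mx}/m$ need not be absolutely integrable against the weight $(\log(1/t))^{\textup{Re}(z)-1}$ on the full interval $[0,1]$ when $|u|=1$ or $|v|=1$, I would first work on a truncated interval $[0, 1-\delta]$, where both expansions converge absolutely and uniformly (the dominating tail $\sum_{m,n\ge 1}(1-\delta)^{n-1+mx}/m$ is finite), so that Fubini applies without fuss, and then pass to the limit $\delta \to 0^{+}$.

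On $[0,1-\delta]$, the substitution $y=-\log t$ turns the typical term into $\int_{-\log(1-\delta)}^{\infty} y^{z-1} e^{-(n+mx)y}\, dy$, which converges to $\Gamma(z)/(n+mx)^z$ as $\delta \to 0^{+}$ and is dominated in modulus by $\Gamma(\textup{Re}(z))/(n+mx)^{\textup{Re}(z)}$. Since $\sum_{m,n\ge 1} 1/(m(n+mx)^{\textup{Re}(z)})$ converges absolutely for $\textup{Re}(z)>1$ and $x>0$, dominated convergence for series lets me pass the limit inside the sum, producing the right-hand side of \eqref{script F series}. On the integral side, the truncated integral converges to $\mathscr{F}_z(x;u,v)$ by ordinary dominated convergence, since the original integrand is integrable on $[0,1]$: near $t=1$ the factor $1/(v^{-1}-t)$ stays bounded (because $|v|\le 1$ together with $v\neq 1$ force $v^{-1}\notin[0,1]$), $(\log(1/t))^{z-1}$ behaves like $(1-t)^{z-1}$, and $\log(1-ut^x)$ has at worst a logarithmic singularity.

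The main obstacle is precisely the boundary case $|u|=1$ or $|v|=1$: a direct application of Fubini--Tonelli on the whole interval $[0,1]$ fails, since the unsigned double series integrated against the weight diverges (roughly like $\int_0^1(\log(1/t))^{\textup{Re}(z)-1}|\log(1-t^x)|/(1-t)\, dt$ when $|u|=|v|=1$). Thus the $\delta$-truncation together with the two separate dominated convergence arguments is the heart of the proof; once this is in place, identifying the integral $\int_0^\infty y^{z-1} e^{-(n+mx)y}\, dy = \Gamma(z)/(n+mx)^z$ completes the calculation.
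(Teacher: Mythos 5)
Your proof follows essentially the same route as the paper's: expand $\log(1-ut^x)=-\sum_{m\ge1}u^mt^{mx}/m$ and $(v^{-1}-t)^{-1}=\sum_{n\ge1}v^nt^{n-1}$, interchange summation and integration, and evaluate $\int_0^1(\log(1/t))^{z-1}t^{n-1+mx}\,dt=\Gamma(z)/(n+mx)^z$; the paper justifies the interchange by appealing to the same standard results it uses in the proof of Theorem \ref{J-lemma}, whereas you do it by truncating to $[0,1-\delta]$ and passing to the limit with two dominated-convergence arguments. The series side of your limiting argument is fine. The gap is on the integral side, in the boundary case $v=1$: you justify integrability of the original integrand near $t=1$ by asserting that $|v|\le1$ together with $v\ne1$ forces $v^{-1}\notin[0,1]$, so that $1/(v^{-1}-t)$ stays bounded. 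But the lemma does not exclude $v=1$ (only $|v|\le1$ is assumed), and the paper crucially applies \eqref{script F series} with $u=v=1$ in the proof of Lemma \ref{script F Phi relation}. For $v=1$ the kernel is $1/(1-t)$, unbounded at $t=1$; integrability there must come instead from $(\log(1/t))^{z-1}\asymp(1-t)^{\textup{Re}(z)-1}$ together with $\textup{Re}(z)>1$, which is exactly where that hypothesis is needed (cf.\ the remark preceding the lemma).

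Relatedly, the ``main obstacle'' you identify is not actually an obstacle: the unsigned majorant converges. Indeed, for $|u|,|v|\le1$ and $0<t<1$,
\begin{align*}
\left(\log\tfrac{1}{t}\right)^{\textup{Re}(z)-1}\sum_{m,n\ge1}\frac{t^{\,n-1+mx}}{m}
=\left(\log\tfrac{1}{t}\right)^{\textup{Re}(z)-1}\frac{-\log(1-t^x)}{1-t},
\end{align*}
and near $t=1$ the right-hand side is comparable to $(1-t)^{\textup{Re}(z)-2}\log\frac{1}{1-t}$, which is integrable because $\textup{Re}(z)-2>-1$; near $t=0$ integrability is clear. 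Hence Fubini--Tonelli applies directly on all of $[0,1]$, which both removes the need for the $\delta$-truncation and disposes of the cases $u=1$ and $v=1$ in one stroke. Alternatively, keep your truncation but replace the boundedness claim for $1/(v^{-1}-t)$ by the estimate above; with that repair your argument is complete.
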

	 \begin{proof}
	 Using the series representations of $\log(1-ut^x)$ and $1/(1-vt)$ and interchanging the order of summation and integration (which can be justified in a similar way as that done in the proof of Theorem \ref{J-lemma}), we see that
	 \begin{align*}
\mathscr{F}_z(x;u,v)&= -\frac{1}{\Gamma(z)} \sum_{m=1}^{\infty} \frac{u^m}{m} \sum_{n=0}^{\infty} v^{n+1} \int_{0}^{1} \left( \log(\tfrac{1}{t})\right)^{z-1}  t^{n+mx}   \, dt \\
&=- \frac{1}{\Gamma(z)} \sum_{m=1}^{\infty} \frac{u^m}{m} \sum_{n=0}^{\infty} v^{n+1}  \frac{\Gamma(z)}{(1+n+mx)^z} \\
&= -\sum_{m=1}^{\infty} \sum_{n=1}^{\infty} \frac{u^m v^{n}}{m (n+mx)^z},
\end{align*}
which leads to the right-hand side of \eqref{script F series} upon interchanging the order of summation.
	 	\end{proof}
	 
	 \begin{remark}\label{unnamed}
	 For $|u|<1, |v|<1$, the restriction on $z$ in the above lemma can be relaxed to \textup{Re}$(z)>0$.
	 \end{remark}
	 We now show that $\mathscr{F}_{z}(x; 1, 1)$ is connected to the Herglotz-Hurwitz function $\Phi(z, x)$.
	 \begin{lemma}\label{script F Phi relation}
		For \textup{Re}$(z)>1$ and $x>0$, we have
		\begin{align*}
			\mathscr{F}_z(x;1,1)  = - \Phi(z,x) + x^{-z} \zeta(z+1)- \frac{x^{1-z}}{z-1} \zeta(z).
		\end{align*}
	\end{lemma}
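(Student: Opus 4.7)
The plan is to reduce everything to the doubly infinite series representation of $\mathscr{F}_z(x;u,v)$ already available from the preceding lemma, and then manipulate the definition of $\Phi(z,x)$ so that the same double series appears. Since the lemma \eqref{script F series} is valid on the closed bidisk $|u|,|v|\le 1$ for $\textup{Re}(z)>1$, I would start by specializing $u=v=1$ to write
\begin{align*}
\mathscr{F}_z(x;1,1) = -\sum_{m=1}^{\infty}\sum_{n=1}^{\infty}\frac{1}{m(n+mx)^z}.
\end{align*}
For $x>0$ and $\textup{Re}(z)>1$, the inner sum is $O(m^{1-\textup{Re}(z)})$ as $m\to\infty$, so the double series converges absolutely; this is the only subtle point in evaluating at the boundary $u=v=1$.

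Next, I would rewrite $\Phi(z,x)$ from its definition \eqref{hhf-def}. Splitting off the $n=0$ term in the Hurwitz zeta gives
\begin{align*}
\zeta(z,mx)=(mx)^{-z}+\sum_{n=1}^{\infty}\frac{1}{(n+mx)^z},
\end{align*}
hence
\begin{align*}
\sum_{m=1}^{\infty}\frac{\zeta(z,mx)}{m}=x^{-z}\zeta(z+1)+\sum_{m=1}^{\infty}\sum_{n=1}^{\infty}\frac{1}{m(n+mx)^z},
\end{align*}
while the remaining piece of $\Phi(z,x)$ contributes
\begin{align*}
\sum_{m=1}^{\infty}\frac{(mx)^{1-z}}{m(z-1)}=\frac{x^{1-z}}{z-1}\zeta(z).
\end{align*}
Subtracting, one obtains $\Phi(z,x)=x^{-z}\zeta(z+1)+\sum_{m,n\geq 1}\frac{1}{m(n+mx)^z}-\frac{x^{1-z}}{z-1}\zeta(z)$, i.e.\ the double series equals $\Phi(z,x)-x^{-z}\zeta(z+1)+\frac{x^{1-z}}{z-1}\zeta(z)$.

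Comparing with the first display now yields the stated identity. The only nontrivial step is the boundary specialization of the series lemma, which is routinely handled either by the absolute-convergence argument above or, equivalently, by Abel's limit theorem applied coordinatewise to \eqref{script F series} as $(u,v)\to(1,1)$ from inside the open bidisk (see also Remark \ref{unnamed}); the rest is bookkeeping with interchange of summations, all of which is justified by absolute convergence for $\textup{Re}(z)>1$.
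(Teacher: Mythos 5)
Your proposal is correct and follows essentially the same route as the paper: both specialize the double-series lemma \eqref{script F series} at $u=v=1$ and split off the $n=0$ term of $\zeta(z,mx)$ in the definition \eqref{hhf-def} to produce the $x^{-z}\zeta(z+1)$ term. The extra remarks on absolute convergence at the boundary are fine but already covered by the hypotheses of the series lemma, which is stated for $|u|,|v|\le 1$ and $\textup{Re}(z)>1$.
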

	 \begin{proof}
	 	Using \eqref{hhf-def}, we see that
	 	\begin{align*}
	 		\Phi(z,x)
	 		&= 	\sum_{m=1}^{\infty}  \frac{\zeta(z,mx)}{m}  -\frac{x^{1-z}}{z-1} \zeta(z) \\
	 		&= \sum_{m=1}^{\infty} \sum_{n=0}^{\infty} \frac{1}{m (n+mx)^z} -\frac{x^{1-z}}{z-1} \zeta(z) \\
	 		&= -\mathscr{F}_z(x;1,1) + x^{-z} \zeta(z+1) -\frac{x^{1-z}}{z-1} \zeta(z),
	 	\end{align*}
	 	where we used \eqref{script F series} in the last step. 
	 \end{proof}
	Our generalization of the Herglotz-Zagier-Novikov function satisfies an elegant functional equation which is the key component in the main result of the next section.
\begin{theorem}\label{small theorem} For  $u,v \in \mathbb{C}$ such that $|u|\leq1$ and $|v|\leq1$, $x>0$, and \textup{Re}$(z)>1$, we have
	\begin{align*}
		\mathscr{F}_z(x;u,v)  + x^{1-z}\mathscr{F}_z\left(\frac{1}{x};v,u\right) = -\sum_{n=1}^{\infty} \sum_{m=1}^{\infty} \frac{u^m v^{n}}{n m (n+mx)^{z-1}}. 
	\end{align*}
\end{theorem}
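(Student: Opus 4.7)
[]
The plan is to apply the series representation \eqref{script F series} to both terms on the left-hand side and then combine the resulting double series by a simple algebraic manipulation.

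First, by \eqref{script F series}, for $|u|\leq 1$, $|v|\leq 1$ and $\textup{Re}(z)>1$,
\begin{align*}
\mathscr{F}_z(x;u,v) = -\sum_{n=1}^{\infty}\sum_{m=1}^{\infty}\frac{u^m v^n}{m(n+mx)^z}.
\end{align*}
Applying the same lemma with $x$ replaced by $1/x$ and with the roles of $u$ and $v$ swapped gives
\begin{align*}
\mathscr{F}_z\!\left(\frac{1}{x};v,u\right) = -\sum_{n=1}^{\infty}\sum_{m=1}^{\infty}\frac{v^m u^n}{m\bigl(n+\tfrac{m}{x}\bigr)^z}.
\end{align*}

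Next, using $\bigl(n+\tfrac{m}{x}\bigr)^z = x^{-z}(nx+m)^z$, one finds $x^{1-z}/\bigl(n+\tfrac{m}{x}\bigr)^z = x/(nx+m)^z$. Relabeling the summation indices $m\leftrightarrow n$ then yields
\begin{align*}
x^{1-z}\mathscr{F}_z\!\left(\frac{1}{x};v,u\right) = -\sum_{n=1}^{\infty}\sum_{m=1}^{\infty}\frac{x\,u^m v^n}{n(n+mx)^z}.
\end{align*}

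Adding the two expressions and pulling out the common factor $u^m v^n$, the summand becomes
\begin{align*}
\frac{1}{m(n+mx)^z} + \frac{x}{n(n+mx)^z} = \frac{n+mx}{mn(n+mx)^z} = \frac{1}{mn(n+mx)^{z-1}},
\end{align*}
which produces exactly the right-hand side of the asserted identity. The only thing that needs a moment's thought is the interchange of summation when combining series; this is justified by absolute convergence for $\textup{Re}(z)>1$ together with the bound $|u|,|v|\leq 1$, so no analytic obstacle arises. In short, the theorem is essentially a bookkeeping consequence of \eqref{script F series} and the symmetry $(n+mx)/x = m + n/x$.
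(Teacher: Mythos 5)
Your proof is correct and follows essentially the same route as the paper: both rest on the series representation \eqref{script F series} together with the decomposition $\frac{1}{mn(n+mx)^{z-1}}=\frac{1}{m(n+mx)^{z}}+\frac{x}{n(n+mx)^{z}}$ and a swap of the summation indices (the paper merely runs the computation from the right-hand side to the left, while you go the other way). The convergence remark is fine, since \eqref{script F series} is already stated under exactly the hypotheses $|u|,|v|\leq1$ and $\textup{Re}(z)>1$.
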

\begin{proof} We multiply and divide the summand by $n+mx$ to get,
	\begin{align*}
		\sum_{n=1}^{\infty} \sum_{m=1}^{\infty} \frac{u^m v^{n}}{n m (n+mx)^{z-1}} &= \sum_{n=1}^{\infty} \sum_{m=1}^{\infty} \frac{u^m v^{n}}{m (n+mx)^{z}} + x \sum_{n=1}^{\infty} \sum_{m=1}^{\infty} \frac{u^m v^{n}}{ n (n+mx)^{z}} \\
		&=\sum_{n=1}^{\infty} \sum_{m=1}^{\infty} \frac{u^m v^{n}}{m (n+mx)^{z}} + x^{1-z} \sum_{n=1}^{\infty} \sum_{m=1}^{\infty} \frac{u^m v^{n}}{ n (\tfrac{n}{x}+m)^{z}} \\
		&= \sum_{n=1}^{\infty} \sum_{m=1}^{\infty} \frac{u^m v^{n}}{m (n+mx)^{z}}  + x^{1-z} \sum_{m=1}^{\infty} \sum_{n=1}^{\infty} \frac{u^{n} v^m }{ m (n+\tfrac{m}{x})^{z}},
	\end{align*}
	where in the second series in the last step, we swapped the indices $m$ and $n$. Then interchange the order of summation in this second series and use \eqref{script F series} to complete the proof.
\end{proof}
\begin{remark}\label{decomposition as a special case}
If we let $u=v=1$ in the above theorem and use Lemma \ref{script F Phi relation}, we recover Theorem \ref{decompostion theorem}.
\end{remark}
\begin{corollary}
Let $\mathscr{F}(x; u, v)$ be defined in \eqref{script F def}. For  $u,v \in \mathbb{C}$ such that $|u|<1$ and $|v|<1$, $x>0$,
\begin{align}\label{1=z}
\mathscr{F}(x; u, v)+\mathscr{F}\left(\frac{1}{x}; v, u\right)=-\log(1-u)\log(1-v).
\end{align}
\end{corollary}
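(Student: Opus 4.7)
The plan is to obtain this corollary as the $z=1$ specialization of Theorem \ref{small theorem}. Setting $z=1$ in the definition \eqref{script F def z} gives $\mathscr{F}_1(x;u,v)=\mathscr{F}(x;u,v)$ and $x^{1-z}=1$, so the left-hand side of the functional equation in Theorem \ref{small theorem} formally becomes exactly the left-hand side of \eqref{1=z}. For the right-hand side at $z=1$, the standard power series $-\log(1-w)=\sum_{k=1}^{\infty}w^k/k$ applied twice yields
\[
-\sum_{n=1}^{\infty}\sum_{m=1}^{\infty}\frac{u^m v^n}{nm}=-\log(1-u)\log(1-v),
\]
which is the desired right-hand side.

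The only point requiring care is that Theorem \ref{small theorem} is stated for $\operatorname{Re}(z)>1$, whereas we need $z=1$. I would handle this by letting $z\to 1^{+}$ from inside the half-plane $\operatorname{Re}(z)>1$ and invoking dominated convergence on both sides. On the left, for $|u|,|v|\leq1$ the analyticity argument already given in the excerpt (immediately before Remark after \eqref{script F def z}) shows that $\mathscr{F}_z(x;u,v)$ is continuous in $z$ at $z=1$, so $\mathscr{F}_z(x;u,v)\to \mathscr{F}(x;u,v)$ and similarly for the second term. On the right, since $n+mx\geq 1$ we have $(n+mx)^{1-\operatorname{Re}(z)}\leq 1$ for $\operatorname{Re}(z)\geq 1$, and hence each summand is dominated in absolute value by $|u|^m|v|^n/(nm)$, whose double sum converges because $|u|,|v|<1$. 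Dominated convergence then permits the passage $z\to 1^{+}$ term by term, producing the series above.

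Alternatively (and perhaps more cleanly, since it avoids a limit argument), one can observe that under the stronger hypothesis $|u|,|v|<1$ the proof of Theorem \ref{small theorem} itself carries over verbatim to $\operatorname{Re}(z)>0$: the series representation \eqref{script F series} is valid for $\operatorname{Re}(z)>0$ by Remark \ref{unnamed}, and the double series $\sum\sum u^m v^n/(nm(n+mx)^{z-1})$ converges absolutely for every $z$ with $\operatorname{Re}(z)>0$, because $(n+mx)^{1-\operatorname{Re}(z)}\leq (n+mx)\leq n+mx$, so the telescoping manipulation in the proof of Theorem \ref{small theorem} is fully justified. Then \eqref{1=z} follows by setting $z=1$ directly.

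There is no real obstacle here; the substantive work was done in Theorem \ref{small theorem}, and the corollary is merely the striking observation that the right-hand side at $z=1$ factors as $-\log(1-u)\log(1-v)$, explaining in particular why specializations such as \eqref{JT-FE} have right-hand sides independent of $x$.
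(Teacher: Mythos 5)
Your proposal is correct and its second (``cleaner'') route is exactly the paper's own argument: invoke Remark \ref{unnamed} to extend the series representation \eqref{script F series}, hence Theorem \ref{small theorem}, to $\operatorname{Re}(z)>0$ when $|u|,|v|<1$, then set $z=1$ and factor the double sum as $-\log(1-u)\log(1-v)$. The additional limiting argument via dominated convergence is a valid but unnecessary alternative.
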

\begin{proof}
In view of Remark \ref{unnamed}, we let $z=1$ in Theorem \ref{small theorem} and simplify.
\end{proof}
	\section{A grand unification of Herglotz-type integrals $J(x)$ and $T(x)$ through roots of unity}\label{unification}
	
	There are several integrals in the literature which satisfy elegant functional equations, one of the most famous being the dilogarithm defined in \eqref{didef}. 
	Another interesting functional equation is due to Ramanujan who posed it as an interesting problem to the Journal of the Indian Mathematical Society \cite{ramanujan indian} and whose generalization we provide below is due to Berndt and Evans \cite{berndt-evans}. Let $f(\infty)$ denote $\lim_{x\to\infty}f(x)$, provided the limit exists, and $\infty$ if $f(x)\to\infty$ as $x\to\infty$. Let $g$ be a strictly increasing, differentiable function on $[0, \infty)$ with $g(0)=1$ and $g(\infty)=\infty$. For $n>0$ and $t\geq0$, let $v(t):=g^n(t)/g(1/t)$. If $\varphi(n):=\int_{0}^{1}\log(g(t))dv/v$ converges, then $\varphi(n)+\varphi(1/n)=2\varphi(1)$. The problem Ramanujan posed is the special case $g(t)=1+t$ of the result of Berndt and Evans.
	
	In this section, we obtain a grand generalization of the functional equations in \eqref{JT-FE}.
	
	\subsection{The function $J_{r, \chi_1, \chi_2}(z, x)$ and its functional equation}\label{integral-J}
	
	Recall the definition of $J_{r, \chi_1, \chi_2}(z, x)$ in \eqref{J r chi def}. We begin with the following lemma connecting $J_{r,\chi_1, \chi_2}(z,x)$ to $\mathscr{F}_z(x,u,v)$.

	\begin{lemma}\label{J r chi script F lemma}
		Let \textup{Re}$(z)>0, x>0$ and $r\geq2$. Let $\chi_1,\chi_2$ be any two Dirichlet characters modulo $r$. Then,
		\begin{align}\label{J as a double sum}
			J_{r,\chi_1, \chi_2}(z,x) = \sum_{j=1}^{r-1} \sum_{k=1}^{r-1} \chi_1(j) \chi_2 (k) \mathscr{F}_z(x ,\zeta_r^{j}, \zeta_r^{k}).
		\end{align}
	\end{lemma}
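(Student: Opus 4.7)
\medskip
\noindent\textbf{Proof plan for Lemma \ref{J r chi script F lemma}.}
The strategy is to open up the Gauss sum $G(\ell,\chi_2)$ in the definition \eqref{J r chi def} and then reduce the inner sum over $\ell$ to a geometric series, so that the factor $\frac{1}{t(1-t^r)}\sum_{\ell=1}^{r}G(\ell,\chi_2)t^{\ell}$ collapses to $\sum_{k=1}^{r-1}\chi_2(k)/(\zeta_r^{-k}-t)$, which is exactly the $\frac{1}{v^{-1}-t}$ kernel appearing in $\mathscr{F}_z(x;u,v)$ with $v=\zeta_r^{k}$. The Fekete-type polynomial in $t^{x}$ will simultaneously reveal itself as a sum of $\log(1-\zeta_r^{j}t^{x})$ with $u=\zeta_r^{j}$.

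Concretely, the plan is as follows. First substitute $G(\ell,\chi_2)=\sum_{k=1}^{r}\chi_2(k)\zeta_r^{k\ell}$ and interchange the two finite sums to obtain
\[
\sum_{\ell=1}^{r}G(\ell,\chi_2)t^{\ell}=\sum_{k=1}^{r}\chi_2(k)\sum_{\ell=1}^{r}(\zeta_r^{k}t)^{\ell}.
\]
For each $k$ with $1\le k\le r-1$, the ratio $\zeta_r^{k}t$ is not equal to $1$ when $t\in(0,1)$ (since $|\zeta_r^{k}t|<1$), so the inner geometric sum evaluates to $\zeta_r^{k}t(t^{r}-1)/(\zeta_r^{k}t-1)$, using $(\zeta_r^{k})^{r}=1$. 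The term $k=r$ drops out because $\chi_2(r)=\chi_2(0)=0$.

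Next, divide by $t(1-t^{r})$; the factor $t^{r}-1$ cancels up to sign, and after rearranging the sign I obtain the clean identity
\[
\frac{1}{t(1-t^{r})}\sum_{\ell=1}^{r}G(\ell,\chi_2)t^{\ell}=\sum_{k=1}^{r-1}\frac{\chi_2(k)}{\zeta_r^{-k}-t}.
\]
The second character sum in \eqref{J r chi def} is already in the desired shape: again using $\chi_1(r)=0$, one has $\sum_{j=1}^{r}\chi_1(j)\log(1-\zeta_r^{j}t^{x})=\sum_{j=1}^{r-1}\chi_1(j)\log(1-\zeta_r^{j}t^{x})$.

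Finally, insert these two expressions into \eqref{J r chi def}, pull the (finite) double sum $\sum_{j=1}^{r-1}\sum_{k=1}^{r-1}\chi_1(j)\chi_2(k)$ outside the integral, and recognize the remaining integral as $\mathscr{F}_z(x;\zeta_r^{j},\zeta_r^{k})$ by definition \eqref{script F def z}. Everything is legitimate: the sums are finite, so interchanging them with the integral requires no convergence argument; and the definition of $\mathscr{F}_z(x;u,v)$ applies because $u=\zeta_r^{j}$ and $v=\zeta_r^{k}$ lie on the unit circle but are different from $1$, so $u\notin(1,\infty)$ and $v\notin[1,\infty)$. The only step requiring any real care is the geometric-sum simplification and the handling of the boundary index $k=r$ (or $j=r$), which is resolved by the vanishing of the character on multiples of its modulus.
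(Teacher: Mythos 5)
Your proposal is correct and is essentially the paper's argument run in the opposite direction: both proofs hinge on the same kernel identity $\frac{1}{t(1-t^r)}\sum_{\ell=1}^{r}G(\ell,\chi_2)t^{\ell}=\sum_{k=1}^{r-1}\chi_2(k)/(\zeta_r^{-k}-t)$, which the paper derives by expanding each $1/(\zeta_r^{-k}-t)$ as an infinite geometric series and regrouping into Gauss sums, while you obtain it by opening the Gauss sum and summing a finite geometric series. Your finite-sum route is marginally cleaner, but the substance is the same.
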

	\begin{proof}
		Using \eqref{script F def z}, we have
		\begin{align}\label{double sum}
			\sum_{j=1}^{r-1} \sum_{k=1}^{r-1} \chi_1(j) \chi_2 (k) \mathscr{F}_z(x ,\zeta_r^{j}, \zeta_r^{k}) &= \frac{1}{\Gamma(z)} \sum_{j=1}^{r-1} \chi_1(j) \sum_{k=1}^{r-1} \chi_2(k) \int_{0}^{1} \frac{\left(\log\left(\frac{1}{t}\right)\right)^{z-1}\left(\log \left( 1-\zeta_r^j t^x \right)\right)}{\zeta_r^{-k}-t} \, dt \nonumber\\
			&= \frac{1}{\Gamma(z)} \int_{0}^{1} \left(\log\left(\frac{1}{t}\right)\right)^{z-1}  \left(\sum_{k=1}^{r-1} \frac{\chi_2(k)}{\zeta_r^{-k}-t}\right) \left(\sum_{j=1}^{r-1} \chi_1(j)\log \left( 1-\zeta_r^j t^x \right)\right)\, dt.
		\end{align}
		We now simplify the sum over $k$. Since $\chi_2(r)=0$,
		\begin{align*}
			&\sum_{k=1}^{r-1} \frac{\chi_2(k)}{\zeta_r^{-k}-t} = \sum_{k=1}^{r} \frac{\chi_2(k)\zeta_r^{k}}{1-t\zeta_r^{k}} =\sum_{k=1}^{r} \chi_2(k)\zeta_r^{k} \sum_{\ell=0}^{\infty} t^\ell \zeta_r^{k\ell} = \sum_{\ell=0}^{\infty} t^{\ell} \sum_{k=1}^{r} \chi_2(k)\zeta_r^{(\ell+1)k} \\
			&= \sum_{b=0}^{\infty} \sum_{a=0}^{r-1} t^{br+a} \sum_{k=1}^{r} \chi_2(k)\zeta_r^{(br+a+1)k}  = \sum_{b=0}^{\infty} t^{br} \sum_{a=0}^{r-1} t^{a} \sum_{k=1}^{r} \chi_2(k)\zeta_r^{(a+1)k} = \frac{1}{1-t^r} \sum_{a=0}^{r-1} G(a+1,\chi_2) t^a \\
			&= \frac{1}{t(1-t^r)} \sum_{a=1}^{r} G(a,\chi_2) t^{a}.
		\end{align*}
		Substituting this expression in \eqref{double sum} and using  \eqref{J r chi def} and the fact that $\chi_1(r)=0$, we see that the proof of \eqref{J as a double sum} is complete.
	\end{proof}

	\begin{remark}\label{J-extended}
		As shown at the beginning of Section \ref{hzn}, the function $\mathscr{F}_z(x; u, v)$ is analytic in \textup{Re}$(z)>0$ whenever $u\in\mathbb{C}\backslash(1,\infty), u\neq0, v\in\mathbb{C}\backslash[1,\infty)$ and $x>0$. Thus, in particular, Lemma \ref{J r chi script F lemma} implies that $J_{r,\chi_1, \chi_2}(z,x)$ is analytic in \textup{Re}$(z)>0$.
	\end{remark}
	
	\noindent
	We are now ready to prove Theorem \ref{J r chi FE}.
	
\begin{proof}[Theorem \textup{\ref{J r chi FE}}][]
		Use lemma \ref{J r chi script F lemma} and Theorem \ref{small theorem} to see that
		\begin{align*}
			J_{r,\chi_1, \chi_2}(z,x) +x^{1-z} J_{r,\chi_2, \chi_1}\left(z,\frac{1}{x}\right) &=- \sum_{j=1}^{r} \sum_{k=1}^{r} \chi_1(j) \chi_2 (k) \left\{ \sum_{n=1}^{\infty} \sum_{m=1}^{\infty} \frac{\zeta_r^{jm+kn}}{m n (n+mx)^{z-1}} \right\}\\
			&= - \sum_{n=1}^{\infty} \sum_{m=1}^{\infty} \frac{G(m,\chi_1)G(n,\chi_2)}{m n (n+mx)^{z-1}}.
		\end{align*}
		In particular, the separability of Gauss sums for a primitive character $\chi$, that is, $G(a,\chi)= G(1,\chi) \bar\chi(a)$, implies \eqref{fe-primitive} whenever $\chi_1$ and $\chi_2$ are primitive. 
	\end{proof}

	\subsection{Analytic continuation of a double Lerch series using Crandall's method}
Observe that in the previous subsection, we proved \eqref{fe-general} for Re$(z)>1$. We want to let $z\to1$ on both sides of \eqref{fe-general} and then interchange the order of limit and integration as well as the order of limit and double sum to obtain some interesting identities. While the passage to the limit is straightforward as far as the left-hand side is concerned (because of the analyticity of $J_{r, \chi_1, \chi_2}(z, x)$ in Re$(z)>0$), it is quite non-trivial for the double sum on the right. We explain this by means of some classic examples.

First, consider the Dirichlet series associated to the M\"{o}bius function $\mu(n)$, that is, $\sum_{n=1}^{\infty}\mu(n)n^{-z}$. It is known to converge absolutely in Re$(z)>1$, in which case, we have $\sum_{n=1}^{\infty}\mu(n)n^{-z}=1/\zeta(z)$. However, putting $z=1$ is an altogether different affair. A theorem, due to Ingham, which comes in handy in this case, states that if $f$ is an arithmetic function with $|f(n)|\leq1$ and $F(z)=\sum_{n=1}^{\infty}f(n)n^{-z}$ is analytic for Re$(z)>1$, and if $F(z)$ can be analytically continued for Re$(z)\geq1$, then the series $\sum_{n=1}^{\infty}f(n)n^{-z}$ converges to $F(z)$ for Re$(z)\geq1$. We let $f(n)=\mu(n)$ and use this theorem along with the fact that $\zeta(z)$ is non-vanishing on the line Re$(z)=1$. This leads to $\sum_{n=1}^{\infty}\mu(n)/n=0$, which is equivalent to the prime number theorem. In particular,  $\lim_{z\to1}\sum_{n=1}^{\infty}\mu(n)n^{-z}=\sum_{n=1}^{\infty}\lim_{z\to1}\mu(n)n^{-z}$.

Another example which can be given in this regard is that of the Dirichlet $L$-function. It is known \cite[p.~79, Lemma 5.8]{zudilin-book} that the series defining the Dirichlet $L$-function converges uniformly in the half-plane Re$(z)>0$ for non-principal characters. In particular, $\lim_{z\to1}\sum_{n=1}^{\infty}\chi(n)n^{-z}=\sum_{n=1}^{\infty}\lim_{z\to1}\chi(n)n^{-z}$.

Note that in the case of the Dirichlet $L$-function, the series defining it is absolutely and uniformly convergent in Re$(z)>1$ but conditionally and uniformly convergent in $0<\textup{Re}(z)\leq1$. The uniform convergence in the latter region can be proved using Dirichlet's test. 

The question that arises then is, does there exist an analogue of Dirichlet's test for double series?  The only one which comes closer to it is due to Ogievecki\u{i} \cite[Theorem 3]{ogieveckii}. However, it is given only for double series of functions of real variable. 	

At this juncture, it is important to be aware of the contrast between results for single series versus double series. In his review of the book of Chelidze \cite{chelidze} in Mathematical Reviews, R. P. Boas says, \emph{`The general theme [of the book] is that most of the standard one-dimensional theorems do not generalize without some restriction either on the class of double series under consideration, or on the kind of limit used, or both.'}. 

Therefore, we resolve the problem of interchanging the order of $\lim_{z\to1}$ and double sum in \eqref{fe-general} by adopting a different approach. We first analytically extend the right-hand side of \eqref{fe-general} for any $z\in\mathbb{C}$ using Crandall's method \cite{crandall-original}, which involves a free parameter $\delta$, thereby obtaining analytically continued version of \eqref{fe-general}, namely, \eqref{fe general extended}, which is valid in Re$(z)>0$, and then let $z=1$. Dilcher \cite{dilcher-amm} has used Crandall's method for getting analytic continuation of character analogue of Mordell-Tornheim zeta function. Other work exploiting Crandall's method is that of Borwein and Dilcher \cite{Borwein-Dilcher}. 
 
Let $\mathscr{L}(s, \alpha)$ be the periodic zeta function defined by
	\begin{align*}
		\mathscr{L}(s, \alpha) = \sum_{n=1}^{\infty} \frac{e^{2\pi in\alpha}}{n^s}.
	\end{align*}
	Clearly, $\mathscr{L}(s, \alpha)=\textup{Li}_s(e^{2\pi i\alpha})$, where $\textup{Li}_s(\eta)$ is the polylogarithm function.
	For $\alpha\in\mathbb{R}$ and Re$(s)>1$, the above series converges absolutely. For $\alpha\in\mathbb{R}\backslash\mathbb{Z}$, the series converges conditionally for Re$(s)>0$. Moreover, 
	$\mathscr{L}(s, \alpha)$ can be analytically continued in the $s$-complex plane.
	
	Let $c,d,r\in\mathbb{N}$ such that $c,d<r$. For Re$(z)>1$ and $x>0$, we define $	\mathbb{W}_{c,d}(z,x)$ by
	\begin{align}\label{W defn}
		\mathbb{W}_{c,d}(z,x):=\sum_{n=1}^{\infty} \sum_{m=1}^{\infty} \frac{e^{ \frac{2\pi i mc}{r}}e^{\frac{2\pi i nd}{r}}}{m n (n+mx)^{z-1}}.
	\end{align}
	The above series is absolutely convergent for Re$(z)>1$. The following proposition gives the analytic continuation of $\mathbb{W}$ in the whole $z$-complex plane. Nakamura \cite[Theorem 2.1]{nakamura} has obtained analytic continuation of our series, but with $x=1$, using a different method. However, the representation that we obtain in the proposition below will be appropriate for our purposes.
	\begin{proposition}\label{crandall}
		$\mathbb{W}_{c,d}(z,x)$ has an analytic continuation in the whole $z$-complex plane. Further, for any $z \in \mathbb{C}$ and for any $\delta$ such that $0<\delta< \min\left(\frac{2\pi}{r},\frac{2\pi}{rx}\right)$, we have
		\begin{align}
			\mathbb{W}_{c,d}(z,x) &=\frac{1}{\Gamma(z-1)} \sum_{n=1}^{\infty} \sum_{m=1}^{\infty}  \frac{e^{ \frac{2\pi i mc}{r}}e^{\frac{2\pi i nd}{r}}}{m n (n+mx)^{z-1}} \Gamma(z-1,(n+mx)\delta) \notag \\ & + \frac{1}{\Gamma(z-1)} \sum_{u=0}^{\infty} \sum_{v=0}^{\infty} (-1)^{u+v} \frac{\mathscr{L}(1-u,\frac{c}{r})\mathscr{L}(1-v,\frac{d}{r})x^u \delta^{u+v+z-1}}{u!v! (u+v+z-1)}, \label{W result}
		\end{align}
		where $\Gamma(z,y)$ is the incomplete Gamma function. In particular,
		\begin{align}
			\mathbb{W}_{c,d}(1,x) = \mathscr{L}\left(1,\frac{c}{r}\right) \mathscr{L}\left(1,\frac{d}{r}\right).  \label{W special}
		\end{align}
	\end{proposition}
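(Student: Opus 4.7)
The plan is to adapt Crandall's technique, which exploits a $\delta$-splitting of the classical gamma integral to convert a slowly convergent Dirichlet-type series into a sum of two absolutely (and in fact rapidly) convergent pieces, one involving the upper incomplete gamma function and the other being a double power series in $\delta$.

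Starting from the identity $\Gamma(z-1)(n+mx)^{1-z}=\int_{0}^{\infty}t^{z-2}e^{-(n+mx)t}\,dt$, valid for $\textup{Re}(z)>1$, I would substitute this into the defining series \eqref{W defn}, interchange the order of summation and integration (justified by absolute convergence for $\textup{Re}(z)>1$), and then split $\int_{0}^{\infty}=\int_{0}^{\delta}+\int_{\delta}^{\infty}$. The tail integral $\int_{\delta}^{\infty}t^{z-2}e^{-(n+mx)t}\,dt=(n+mx)^{1-z}\Gamma(z-1,(n+mx)\delta)$ produces, after dividing by $\Gamma(z-1)$ and summing, exactly the first double sum on the right-hand side of \eqref{W result}. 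The head integral is handled by expanding $e^{-(n+mx)t}=\sum_{k\geq0}(-(n+mx))^{k}t^{k}/k!$ and integrating term by term, followed by the binomial identity $(n+mx)^{k}=\sum_{u+v=k}\binom{k}{u}(mx)^{u}n^{v}$, yielding
\begin{align*}
\int_{0}^{\delta}t^{z-2}e^{-(n+mx)t}\,dt=\sum_{u,v\geq0}\frac{(-1)^{u+v}(mx)^{u}n^{v}\delta^{z-1+u+v}}{u!\,v!\,(u+v+z-1)}.
\end{align*}
Substituting this back and swapping the $(u,v)$ and $(m,n)$ sums (which I would justify for $\textup{Re}(z)>1$ by dominated convergence, using the Hurwitz-zeta representation of $\mathscr{L}$ to control the inner sums) identifies $\sum_{m\geq1}e^{2\pi imc/r}m^{u-1}=\mathscr{L}(1-u,c/r)$ and likewise for the $n$-sum, producing the second double series in \eqref{W result}.

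Next I would establish that the right-hand side of \eqref{W result} defines an entire function of $z$. For the first piece, the bound $\Gamma(z-1,y)=O(y^{\textup{Re}(z)-2}e^{-y})$ as $y\to\infty$ renders the double sum absolutely convergent uniformly on compact sets in $z$. For the second piece, the values $\mathscr{L}(1-u,c/r)$ at non-positive integers grow only polynomially in $u$ (being explicit combinations of Bernoulli polynomials via the Hurwitz zeta at negative integers), so the factorials $1/(u!\,v!)$ together with $\delta^{u+v}$ yield geometric convergence provided $0<\delta<\min(2\pi/r,2\pi/(rx))$; the constraint on $\delta$ is dictated by the radius of convergence arising in the formal interchange step (it is the condition under which the Taylor expansion of $e^{-(n+mx)t}$ integrates term by term against a convergent $L$-series representation). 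Having verified equality on the overlap region $\textup{Re}(z)>1$, the identity theorem then extends \eqref{W result} to all $z\in\mathbb{C}$, providing the desired analytic continuation of $\mathbb{W}_{c,d}(z,x)$.

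Finally, for \eqref{W special}, I would specialize \eqref{W result} to $z=1$. Since $1/\Gamma(z-1)$ has a simple zero at $z=1$, only terms in which some factor acquires a compensating simple pole survive. The first sum is finite at $z=1$ (the series $\sum_{m,n}\frac{e^{2\pi i(mc+nd)/r}}{mn}\Gamma(0,(n+mx)\delta)$ converges absolutely thanks to the exponential decay $\Gamma(0,y)\sim e^{-y}/y$), so it contributes $0$. In the second sum, the factor $1/(u+v+z-1)$ has a pole precisely at $u=v=0$ when $z\to1$, and this is the only surviving term; its residue evaluation yields $\mathscr{L}(1,c/r)\mathscr{L}(1,d/r)$, independent of $\delta$, establishing \eqref{W special}.

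The main obstacle I anticipate is the rigorous justification of the two interchanges of summation with integration in the head-integral calculation, together with verifying that the $\delta$-range $0<\delta<\min(2\pi/r,2\pi/(rx))$ is the correct one to make the resulting double power series converge. I would address this by first proving the identity on the strip $\textup{Re}(z)>1$ where the outer double sum over $(m,n)$ is absolutely convergent, then using the standard Hurwitz-zeta expression for $\mathscr{L}(1-u,c/r)$ to obtain effective polynomial bounds in $u$ that combine with $\delta^{u+v}/(u!\,v!)$ to ensure absolute convergence of the $(u,v)$-sum; both interchanges then follow from Fubini's theorem, and analytic continuation does the rest.
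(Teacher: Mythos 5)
Your overall architecture (split the gamma integral at $\delta$, identify the tail with the incomplete-gamma series, show both pieces are entire, and extract \eqref{W special} from the $u=v=0$ term) matches the paper's, and your treatment of the tail term, of the analytic continuation, and of the $z=1$ specialization is correct. The genuine gap is in the head integral. You expand $e^{-(n+mx)t}$ in powers of $t$ first and then propose to interchange the $(u,v)$-sum with the $(m,n)$-sum by dominated convergence, identifying $\sum_{m\ge1}e^{2\pi imc/r}m^{u-1}$ with $\mathscr{L}(1-u,c/r)$. But for $u\ge1$ that inner series diverges (for $u=1$ its terms do not tend to zero; for $u\ge2$ they grow), so the identification holds only as an analytic continuation, not as a convergent sum, and neither Fubini nor dominated convergence applies: the quadruple array is not absolutely summable and the iterated sums in your order do not even exist. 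A related symptom is your claim that $\mathscr{L}(1-u,c/r)$ grows only polynomially in $u$; by the Hurwitz formula it grows like $\Gamma(u)(2\pi)^{-u}$, and it is precisely this factorial-over-exponential growth, balanced against $x^u\delta^{u}/u!$, that forces the constraint $\delta<\min(2\pi/r,2\pi/(rx))$ --- under polynomial growth any $\delta>0$ would work and the hypothesis would be vacuous.

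The paper's proof reverses the order of operations to avoid exactly this. It first performs the $(m,n)$-sums inside the $t$-integral, which is legitimate and yields
\begin{align*}
P(z,x)=\int_0^{\delta}t^{z-2}\,\textup{Li}_1\big(e^{2\pi ic/r-xt}\big)\,\textup{Li}_1\big(e^{2\pi id/r-t}\big)\,dt,
\end{align*}
and only then expands each polylogarithm in powers of $t$ via the Lerch--Hurwitz expansion of $\textup{Li}_s(e^{2\pi i\alpha-w})$ about $w=0$ (Erd\'elyi, formula 1.11.8). The coefficients of that expansion are precisely the continued values $\mathscr{L}(1-u,c/r)$, the potentially singular term $\Gamma(1-s)(rxt)^{s-1}$ is annihilated by $\sum_{b=1}^{r}e^{2\pi ibc/r}=0$ since $1\le c<r$, and the radius of convergence of the expansion is exactly what dictates the admissible range of $\delta$. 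To repair your argument you would need either to adopt this sum-then-expand route or to replace the invalid interchange by a genuine Abel-summation/regularization argument.
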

	\begin{proof}
		Let $0<\delta< \min\left(\frac{2\pi}{r},\frac{2\pi}{rx}\right)$. For $\textup{Re}(z)>1$, the definition of the incomplete Gamma function and the substitution $y=(n+mx)t$  leads to
		\begin{align*}
			\Gamma(z-1, (n+mx)\delta) = (n+mx)^{z-1} \int_{\delta}^{\infty} t^{z-2} e^{-(n+mx)t}\, dt. 
		\end{align*}
		Hence, from \eqref{W defn}, we see that
		\begin{align}
			&\Gamma(z-1) \mathbb{W}_{c,d}(z,x) \notag\\
			&= \sum_{n=1}^{\infty} \sum_{m=1}^{\infty} \frac{e^{ \frac{2\pi i mc}{r}}e^{\frac{2\pi i nd}{r}}}{m n} \int_{0}^{\infty} t^{z-2} e^{-(n+mx)t}\, dt \notag\\
			&= \sum_{n=1}^{\infty} \sum_{m=1}^{\infty} \frac{e^{ \frac{2\pi i mc}{r}}e^{\frac{2\pi i nd}{r}}}{m n}  \int_{0}^{\delta} t^{z-2} e^{-(n+mx)t}\, dt + \sum_{n=1}^{\infty} \sum_{m=1}^{\infty} \frac{e^{ \frac{2\pi i nc}{r}}e^{\frac{2\pi i nd}{r}}}{m n(n+mx)^{z-1}} 	\Gamma(z-1, (n+mx)\delta). \label{W middle}
		\end{align}
		Let
		\begin{align*}
			P(z,x):= \sum_{n=1}^{\infty} \sum_{m=1}^{\infty} \frac{e^{ \frac{2\pi i mc}{r}}e^{\frac{2\pi i nd}{r}}}{m n}  \int_{0}^{\delta} t^{z-2} e^{-(n+mx)t}\, dt.
		\end{align*}
		We can interchange the order of sum and integration in $P(z,x)$ using \cite[p.~30, Theorem 2.1]{temme} to get
		\begin{align}
			P(z,x)
			=  \int_{0}^{\delta} t^{z-2} \textup{Li}_1 \left( e^{\frac{2\pi i c}{r} -xt} \right)   \textup{Li}_1 \left( e^{\frac{2\pi i d}{r} -t} \right) \, dt. \label{P simplified}
		\end{align}
	Now for $s\in \mathbb{C} \backslash\mathbb{N}$ and Re$(s)>0$,
		\begin{align*}
			\textup{Li}_s \left( e^{\frac{2\pi i c}{r} -xt} \right) 
			= \sum_{a=0}^{\infty} \sum_{b=1}^{r} \frac{e^{ \frac{2\pi i (ar+b)c}{r}-(ar+b)xt}}{(ar+b)^s}
			= \frac{1}{r^s} \sum_{b=1}^{r} e^{\frac{2\pi ibc}{r}-bxt} \sum_{a=0}^{\infty} \frac{e^{ -arxt}}{\left(a+\frac{b}{r}\right)^s}.
		\end{align*}
		Using \cite[p.~29, Formula 1.11.8]{Erdelyi} with $z=e^{-rxt}$, where $0<t<\delta<\min\left(\frac{2\pi}{r},\frac{2\pi}{rx}\right)$, and $v=b/r$, for $s\notin\mathbb{N}$, we see that the inner sum transforms to
		\begin{align*}
		\sum_{a=0}^{\infty} \frac{e^{ -arxt}}{\left(a+\frac{b}{r}\right)^s}=e^{bxt}\left(\sum_{u=0}^{\infty} \frac{\zeta(s-u,\frac{b}{r})(-xrt)^{u}}{u!} +\Gamma(1-s)(rxt)^{s-1}\right).
		\end{align*} 
		Hence
		\begin{align}
			\textup{Li}_s \left( e^{\frac{2\pi i c}{r} -xt} \right) 
			= \frac{1}{r^s}\sum_{u=0}^{\infty} \frac{(-xrt)^u}{u!}  \sum_{b=1}^{r} e^{\frac{2\pi ibc}{r}} \zeta\left(s-u,\frac{b}{r}\right) + \frac{1}{r} \Gamma(1-s) (xt)^{s-1} \sum_{b=1}^{r} e^{\frac{2\pi i cb}{r}}.\label{polylog sum}
		\end{align}
		Observe that $1\leq c<r$ implies $\sum_{b=1}^{r} e^{\frac{2\pi i cb}{r}}=0$. Also, $ \sum_{b=1}^{r} e^{\frac{2\pi ibc}{r}} \zeta\left(s-u,\frac{b}{r}\right) =r^{s-u} \mathscr{L}(s-u, \frac{c}{r})$. Thus, \eqref{polylog sum} becomes
		\begin{align}
			\textup{Li}_s \left( e^{\frac{2\pi i c}{r} -xt} \right)=  \sum_{u=0}^{\infty} \mathscr{L}\left(s-u, \frac{c}{r}\right) \frac{(-xt)^u}{u!}. \label{Polylog final}
		\end{align}
		Letting $s \to 1$ in \eqref{Polylog final}, we get,
		\begin{align}
			\textup{Li}_1 \left( e^{\frac{2\pi i c}{r} -xt} \right)=  \sum_{u=0}^{\infty} \mathscr{L}\left(1-u, \frac{c}{r}\right) \frac{(-xt)^u}{u!}.  \label{Polylog final s=1}
		\end{align}
	Invoking \eqref{Polylog final s=1} twice in \eqref{P simplified}, we get
		\begin{align}
			P(z,x)= \int_{0}^{\delta} t^{z-2} \left( \sum_{u=0}^{\infty} \mathscr{L}\left(1-u, \frac{c}{r}\right) \frac{(-xt)^u}{u!}  \right) \left( \sum_{v=0}^{\infty} \mathscr{L}\left(1-v, \frac{d}{r}\right) \frac{(-t)^v}{v!} \right) \, dt. \label{P final}
		\end{align}
		Now we show that $\sum_{u=0}^{\infty} \mathscr{L}\left(1-u, \frac{c}{r}\right) \frac{(-xt)^u}{u!}$ converges absolutely and uniformly in $t$ on any compact subsets of $(0, \delta)$. We separate 
		the first two terms of this series so that
		\begin{align*}
			\sum_{u=0}^{\infty} \mathscr{L}\left(1-u, \frac{c}{r}\right) \frac{(-xt)^u}{u!} = \mathscr{L}\left( 1, \frac{c}{r}-1\right)-xt \mathscr{L}\left( 0, \frac{c}{r}-1 \right) +\sum_{u=2}^{\infty} \mathscr{L}\left(1-u, \frac{c}{r}\right) \frac{(-xt)^u}{u!}.
			\end{align*}
		Now let $a=1-\frac{c}{r}$, $s=1-u$, where $u\geq2$, and $x=1$ in the functional equation given in \cite[Theorem 10]{navas} to get
		\begin{align*}
			\mathscr{L}\left( 1-u, \frac{c}{r}-1 \right) = \frac{\Gamma(u)}{(2\pi)^u} \left\{ e^{\pi i \left( -\frac{u}{2}-\frac{2c}{r}+2 \right)} \zeta\left( u,1-\frac{c}{r} \right) + e^{\pi i \left( \frac{u}{2}-\frac{2c}{r} \right)} \zeta\left( u,\frac{c}{r} \right) \right\}.
		\end{align*}
		Since $\mathscr{L}\left( 1-u, \frac{c}{r}-1 \right)=\mathscr{L}\left( 1-u, \frac{c}{r} \right)$ and the Hurwitz zeta function is bounded as $u\to\infty$, we can see $\mathscr{L}\left( 1-u, \frac{c}{r} \right)=O\left(\frac{\Gamma(u)}{(2\pi)^u}\right)$ as $u\to\infty$. Since $0<t<\delta<\frac{2\pi}{x}$, the series $\sum_{u=0}^{\infty} \mathscr{L}\left(1-u, \frac{c}{r}\right) \frac{(-xt)^u}{u!}$  converges absolutely and uniformly on any compact subsets of $(0, \delta)$. Moreover,
		\begin{align}
			\sum_{u=0}^{\infty} \mathscr{L}\left(1-u, \frac{c}{r}\right) \frac{(-xt)^u}{u!} = \mathscr{L}\left( 1, \frac{c}{r}-1\right)-xt \mathscr{L}\left( 0, \frac{c}{r}-1 \right) + O\left( \sum_{u=2}^{\infty} \frac{\Gamma(u)}{u!} \left(\frac{-xt}{2\pi}\right)^u \right) = O(1), \label{Scr L bound}
		\end{align}
		Thus, by \cite[p.~30, Theorem 2.1]{temme} interchanging the order of integration and double sum in \eqref{P final}, for Re$(z)>1$, we have
		\begin{align}
			P(z,x)
			= \sum_{u=0}^{\infty} \sum_{v=0}^{\infty} (-1)^{u+v} \frac{\mathscr{L}\left( 1-u,\frac{c}{r} \right)\mathscr{L}\left( 1-v,\frac{d}{r} \right)x^u \delta^{u+v+z-1}}{u!v!(u+v+z-1)}. \label{P solved}
		\end{align} 
		Thus, from \eqref{W middle} and \eqref{P solved}, we get \eqref{W result} for Re$(z)>1$. We now show that $\mathbb{W}_{c,d}(z,x)$ can be analytic continued to an entire function in $z$.	Use the asymptotic expansion of the incomplete gamma function \cite[p.~179, Equation 8.11.2]{nist}, namely, for any $z$ and $\delta$ and large values of $m$ and $n$,   
		\begin{align*}
			\Gamma(z-1,(n+mx)\delta) = ((n+mx)\delta)^{z-2} e^{-(n+mx)\delta} \left( 1+ O_z\left(  \frac{1}{(n+mx)\delta}\right) \right)
		\end{align*}
		in the first series on the right-hand side of \eqref{W result} so as to have
		\begin{align*}
			&\frac{1}{\Gamma(z-1)} \sum_{n=1}^{\infty} \sum_{m=1}^{\infty}  \frac{e^{ \frac{2\pi i mc}{r}}e^{\frac{2\pi i nd}{r}}}{m n (n+mx)^{z-1}} \Gamma(z-1,(n+mx)\delta) \\ 
			&= \frac{\delta^{z-2}}{\Gamma(z-1)} \sum_{n=1}^{\infty} \sum_{m=1}^{\infty}  \frac{e^{ \frac{2\pi i mc}{r}}e^{\frac{2\pi i nd}{r}}}{m n (n+mx)} e^{-(n+mx)\delta} \left( 1+ O_z\left(  \frac{1}{(n+mx)\delta}\right) \right),
		\end{align*}
		which implies that the first series on the right-hand side of \eqref{W result} converges absolutely and uniformly for any $z\in\mathbb{C}$. In the second double series, for any $z \in \mathbb{C}$, we see that $u+v+\textup{Re}(z)-1>1$ for all but first finitely many $u$ and $v$. Therefore, by using the bound shown in \eqref{Scr L bound} and the fact that  $0<\delta,x\delta<2\pi$, we see that the second expression on the right-hand side of \eqref{W result} converges for any $z\in\mathbb{C}$. This completes the proof of analytic continuation of $	\mathbb{W}_{c,d}(z,x)$ in the whole $z$-complex plane.
		
		In order to obtain \eqref{W special}, we separate the $u=v=0$ term of the second double series and then let $z\to1$ in \eqref{W result}. 
	\end{proof}
\begin{remark}\label{gauss - analytic continuation}
Using \eqref{W defn} and the definition of Gauss sum, we see that for any Dirichlet characters $\chi_1$ and $\chi_2$ modulo $r$, for $\textup{Re}(z)>1$,
\begin{align}\label{remark identity}
	\sum_{n=1}^{\infty} \sum_{m=1}^{\infty} \frac{G(m,\chi_1)G(n,\chi_2)}{m n (n+mx)^{z-1}} = \sum_{c=0}^{r} \sum_{d=0}^{r} \chi_1(c) \chi_2(d) \mathbb{W}_{c,d}(z,x).
\end{align}
Therefore, with the help of Proposition \ref{crandall}, we see that the right-hand side of the above equation provides the analytic continuation of the left-hand side in the whole complex plane.
\end{remark}
	
\begin{theorem}\label{extended version}
Let $\chi_1$,$\chi_2$ be any Dirichlet characters modulo $r$. Let $J_{r,\chi_1, \chi_2}(z,x) $ be defined in \eqref{J r chi def}. For $\textup{Re}(z)>0$, $x>0$, $r>1$ and $\delta$ such that $0<\delta< \min\left(\frac{2\pi}{r},\frac{2\pi}{rx}\right)$, we have
\begin{align}\label{fe general extended}
	J_{r,\chi_1, \chi_2}(z,x) +x^{1-z} J_{r,\chi_2, \chi_1}\left(z,\frac{1}{x}\right) = -\sum_{c=0}^{r} \sum_{d=0}^{r} \chi_1(c) \chi_2(d) \mathbb{W}_{c,d}(z,x),
\end{align}
where $\mathbb{W}_{c,d}(z,x)$ is defined in \eqref{W result}.
\end{theorem}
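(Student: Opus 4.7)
[]
The plan is to deduce the extended identity from Theorem \ref{J r chi FE} by the identity principle for holomorphic functions, once we verify that both sides define analytic functions of $z$ on the half-plane $\textup{Re}(z)>0$ and that they agree on the smaller half-plane $\textup{Re}(z)>1$ where Theorem \ref{J r chi FE} applies.

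First, I would record analyticity of each side. For the left-hand side, Remark \ref{J-extended} (which rests on Lemma \ref{J r chi script F lemma} together with the analyticity of $\mathscr{F}_z(x;u,v)$ in $\textup{Re}(z)>0$ established at the start of Section \ref{hzn}) shows that both $J_{r,\chi_1,\chi_2}(z,x)$ and $J_{r,\chi_2,\chi_1}(z,1/x)$ are analytic in $\textup{Re}(z)>0$; the prefactor $x^{1-z}$ is entire in $z$, so the full left-hand side is analytic there. For the right-hand side, Proposition \ref{crandall} furnishes an explicit representation of $\mathbb{W}_{c,d}(z,x)$ (depending on the free parameter $\delta$ with $0<\delta<\min(2\pi/r,2\pi/(rx))$) which is analytic in $z$ on the whole plane. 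Summing finitely many such terms against the characters shows the right-hand side of \eqref{fe general extended} is entire in $z$ for any admissible $\delta$.

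Next, I would establish equality on $\textup{Re}(z)>1$. By Theorem \ref{J r chi FE}, for $\textup{Re}(z)>1$ we have
\begin{align*}
J_{r,\chi_1,\chi_2}(z,x)+x^{1-z}J_{r,\chi_2,\chi_1}\left(z,\tfrac{1}{x}\right)=-\sum_{n=1}^{\infty}\sum_{m=1}^{\infty}\frac{G(m,\chi_1)\,G(n,\chi_2)}{mn(n+mx)^{z-1}}.
\end{align*}
Expanding the Gauss sums in their definition and interchanging the (finite) sums over residue classes with the (absolutely convergent) double sum turns the right-hand side into $-\sum_{c=0}^{r}\sum_{d=0}^{r}\chi_1(c)\chi_2(d)\mathbb{W}_{c,d}(z,x)$ via the definition \eqref{W defn}; this is precisely \eqref{remark identity}. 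Thus \eqref{fe general extended} holds on $\textup{Re}(z)>1$.

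Finally, since both sides of \eqref{fe general extended} are analytic on the connected open set $\textup{Re}(z)>0$ and coincide on the subdomain $\textup{Re}(z)>1$, the identity theorem for holomorphic functions extends the equality to all of $\textup{Re}(z)>0$. The parameter $\delta$ is built into the representation of $\mathbb{W}_{c,d}(z,x)$ in \eqref{W result} but the value of $\mathbb{W}_{c,d}(z,x)$ itself does not depend on $\delta$ (any two admissible choices produce the same analytic continuation of the original series), so the statement of the theorem is invariant under the choice of $\delta$ in its prescribed range. The main obstacle is not conceptual — everything has been set up by the earlier results — but purely bookkeeping: one must be careful that the analyticity established in Remark \ref{J-extended} is indeed uniform on compacta of $\{\textup{Re}(z)>0\}$ so that the identity principle applies, which follows from the uniform-convergence argument given for $\mathscr{F}_z$ at the start of Section \ref{hzn}.
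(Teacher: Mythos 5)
Your proposal is correct and follows exactly the route the paper takes: equality on $\textup{Re}(z)>1$ via Theorem \ref{J r chi FE} and \eqref{remark identity}, analyticity of the two sides from Remark \ref{J-extended} and Proposition \ref{crandall}, and then the identity theorem. The paper's proof is just a one-line citation of these same ingredients, so your write-up is simply a more detailed version of the same argument.
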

\begin{proof}
This follows from Theorem \ref{J r chi FE}, Remarks \ref{J-extended} and \ref{gauss - analytic continuation}, and the principle of analytic continuation.
\end{proof}
When $z=1$ in Theorem \ref{extended version}, the right-hand side of \eqref{fe general extended} simplifies considerably. To see this, define
\begin{equation*}
	\mathfrak{D}_{\chi_1, \chi_2}=\begin{cases} 
	\pi^2 L(0,\chi_1) L(0,\chi_2), & \text{if } \chi_1 \text{ and } \chi_2 \text{ are odd,} \\
	- 4 L'(0,\chi_1) L'(0,\chi_2), & \text{if }\chi_1 \text{ and } \chi_2 \text{ are even,} \\
	-2 \pi i L(0,\chi_1) L'(0,\chi_2), &\text{if } \chi_1 \text{ is odd and } \chi_2 \text{ is even,} \\
	-2 \pi i L'(0,\chi_1) L(0,\chi_2), & \text{if } \chi_1 \text{ is even and } \chi_2 \text{ is odd.}  \\
\end{cases}
\end{equation*}
\begin{corollary}\label{corollary L zero chi} 
	Let $\chi_1,\chi_2$ be non-trivial Dirichlet characters modulo $r$. Then,
	\begin{align*}
		J_{r,\chi_1, \chi_2}(1,x) + J_{r,\chi_2, \chi_1}\left(1,\frac{1}{x}\right) =
		\mathfrak{D}_{\chi_1, \chi_2}.
	\end{align*}
\end{corollary}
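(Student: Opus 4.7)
The plan is to specialize Theorem \ref{extended version} at $z=1$ and then identify the resulting double sum as a product of two single sums in closed form. Setting $z=1$ on the left of \eqref{fe general extended} gives exactly $J_{r,\chi_1,\chi_2}(1,x) + J_{r,\chi_2,\chi_1}(1,1/x)$, while on the right Proposition \ref{crandall}, equation \eqref{W special}, collapses each $\mathbb{W}_{c,d}(1,x)$ to $\mathscr{L}(1,c/r)\mathscr{L}(1,d/r)$, which is already $x$-independent. Since the terms with $c\in\{0,r\}$ or $d\in\{0,r\}$ vanish (as $\chi_1,\chi_2$ are non-trivial mod $r$, so $\chi_i(0)=\chi_i(r)=0$), the right-hand side factors and becomes
\begin{align*}
J_{r,\chi_1,\chi_2}(1,x) + J_{r,\chi_2,\chi_1}(1,\tfrac{1}{x}) = -\left(\sum_{c=1}^{r-1}\chi_1(c)\,\mathscr{L}(1,\tfrac{c}{r})\right)\left(\sum_{d=1}^{r-1}\chi_2(d)\,\mathscr{L}(1,\tfrac{d}{r})\right).
\end{align*}

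It then remains to evaluate, for a non-trivial character $\chi$ mod $r$, the single sum $S(\chi):=\sum_{c=1}^{r-1}\chi(c)\,\mathscr{L}(1,c/r)$. Using $\mathscr{L}(1,\alpha)=-\log(1-e^{2\pi i\alpha})$ together with the identity $1-e^{2\pi ic/r}=2\sin(\pi c/r)\,e^{i(\pi c/r - \pi/2)}$ for $0<c<r$, I would split the sum into real and imaginary parts:
\begin{align*}
S(\chi) = -\sum_{c=1}^{r-1}\chi(c)\log\bigl(2\sin(\pi c/r)\bigr) + i\pi\sum_{c=1}^{r-1}\chi(c)\bigl(\tfrac{1}{2}-\tfrac{c}{r}\bigr).
\end{align*}
Since $\chi$ is non-trivial, $\sum\chi(c)=0$, so the imaginary part simplifies to $-i\pi\sum\chi(c)\,c/r = i\pi L(0,\chi)$, using the standard formula $L(0,\chi)=-\tfrac{1}{r}\sum_{c=1}^{r-1}\chi(c)\,c$.

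Parity considerations now separate the analysis. When $\chi$ is odd, the substitution $c\mapsto r-c$ kills the $\log(2\sin(\pi c/r))$-sum (since $\chi(r-c)=-\chi(c)$ and $\sin$ is symmetric about $\pi/2$), leaving $S(\chi)=i\pi L(0,\chi)$. When $\chi$ is even, one has $L(0,\chi)=0$, so the imaginary part vanishes; for the real part I would invoke Lerch's formula $\zeta'(0,a)=\log\Gamma(a)-\tfrac{1}{2}\log(2\pi)$ together with $L'(0,\chi)=\sum_{a=1}^{r-1}\chi(a)\log\Gamma(a/r)$ (the $-\log(r)L(0,\chi)$ term vanishes), and then use the reflection formula $\Gamma(x)\Gamma(1-x)=\pi/\sin(\pi x)$ combined with the symmetry $\chi(r-a)=\chi(a)$ to obtain $2L'(0,\chi)=-\sum\chi(a)\log\sin(\pi a/r)=-\sum\chi(a)\log(2\sin(\pi a/r))$. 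This gives $S(\chi)=2L'(0,\chi)$ in the even case.

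Putting this together for each of the four parity combinations of $(\chi_1,\chi_2)$ yields $-S(\chi_1)S(\chi_2)$ equal to $\pi^2 L(0,\chi_1)L(0,\chi_2)$, $-4L'(0,\chi_1)L'(0,\chi_2)$, $-2\pi i L(0,\chi_1)L'(0,\chi_2)$, or $-2\pi i L'(0,\chi_1)L(0,\chi_2)$ respectively, matching the four branches of $\mathfrak{D}_{\chi_1,\chi_2}$ exactly. The main obstacle is bookkeeping: one must track the signs, factors of $\pi$ and $i$, and the $\chi(-1)=\pm1$ symmetries carefully through the branches of the logarithm, but no new analytic input beyond \eqref{W special} and classical identities for $\zeta(0,a)$ and $\zeta'(0,a)$ is required.
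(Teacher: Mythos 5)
Your proposal is correct and takes essentially the same route as the paper: both specialize the analytically continued functional equation \eqref{fe general extended} at $z=1$, use \eqref{W special} to factor the right-hand side into $-\bigl(\sum_{m\geq 1} G(m,\chi_1)/m\bigr)\bigl(\sum_{n\geq 1} G(n,\chi_2)/n\bigr)$ (your $S(\chi)=\sum_c\chi(c)\mathscr{L}(1,c/r)$ is this same single sum after interchanging the order of summation), and then evaluate each factor by parity as $i\pi L(0,\chi)$ or $2L'(0,\chi)$. The only cosmetic difference is that you derive these classical closed forms from Lerch's formula and the reflection formula, where the paper cites them directly from Cohen's book.
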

\begin{proof}
	If $\chi$ is an odd character, then using \cite[p.~71, Theorem 3.44]{rudin}, we see that
	\begin{align}
		\sum_{m=1}^{\infty} \frac{G(m,\chi)}{m} &= \sum_{m=1}^{\infty} \frac{1}{m} \left( \sum_{j=1}^{r} \chi(j) \zeta_r^{jm} \right) = - \sum_{j=1}^{r} \chi(j) \log(1-\zeta_r^j) = i \sum_{j=1}^{r-1} \chi(j)  \tan^{-1} \left( \frac{ \sin \left( \frac{2\pi j}{r} \right) }{1-\cos \left( \frac{2\pi j}{r} \right) } \right) \notag \\
		&= i \sum_{j=1}^{r-1} \chi(j) \left( \frac{\pi}{2} - \frac{2\pi j}{2r} \right) = i \pi \left( \frac{1}{2}\sum_{j=1}^{r-1} \chi(j) - \frac{1}{r} \sum_{j=1}^{r-1} j\chi(j)   \right) = -\frac{i\pi}{r} \sum_{j=1}^{r-1} j\chi(j) = i\pi L(0,\chi), \label{Gauss sum odd}
	\end{align}
	where in the last step, we used \cite[Corollary 10.3.2]{cohen}. If $\chi$ is an even character,
	\begin{align}
		\sum_{m=1}^{\infty} \frac{G(m,\chi)}{m} &=- \sum_{j=1}^{r} \chi(j) \log(1-\zeta_r^j) = - \frac{1}{2} \sum_{j=0}^{r} \chi_1(j) \log \left(  2-2 \cos\left(\frac{2 \pi j}{r}\right)   \right) \notag \\
		&= - \frac{1}{2} \sum_{j=0}^{r} \chi_1(j) \log \left(  4 \sin^2\left(\frac{\pi j}{r}\right)   \right) =-\sum_{j=0}^{r} \chi_1(j) \log \left(  2 \sin\left(\frac{\pi j}{r}\right)   \right) \notag \\
		&= -\sum_{j=0}^{r} \chi_1(j) \log \left(  2    \right) -\sum_{j=0}^{r} \chi_1(j) \log \left(  \sin\left(\frac{\pi j}{r}\right)   \right) = 2 L'(0,\chi), \label{Gauss sum even}
	\end{align} 
	where we used \cite[Corollary 10.3.5]{cohen} in the last step. Now let $z\to1$ in \eqref{remark identity}
	 and use \eqref{W special} to get
	\begin{align}\label{passing the limit}
		\lim_{z \to 1} \sum_{n=1}^{\infty} \sum_{m=1}^{\infty} \frac{G(m,\chi_1)G(n,\chi_2)}{m n (n+mx)^{z-1}} &= \sum_{c=0}^{r} \sum_{d=0}^{r} \chi_1(c) \chi_2(d) \lim_{z \to 1} \mathbb{W}_{c,d}(z,x) \nonumber\\
		&= \sum_{c=0}^{r} \sum_{d=0}^{r} \chi_1(c) \chi_2(d) \mathscr{L}\left(1,\frac{c}{r}\right) \mathscr{L}\left(1,\frac{d}{r}\right) \nonumber\\
		&= \left(\sum_{m=1}^{\infty} \frac{G(m,\chi_1)}{m} \right) \left(\sum_{n=1}^{\infty} \frac{G(n,\chi_2)}{n} \right).
	\end{align}
	Thus,
	\begin{align}
		J_{r,\chi_1, \chi_2}(1,x) + J_{r,\chi_2, \chi_1}\left(1,\frac{1}{x}\right) = - \left(\sum_{m=1}^{\infty} \frac{G(m,\chi_1)}{m} \right) \left(\sum_{n=1}^{\infty} \frac{G(n,\chi_2)}{n} \right). \label{J r chi FE z=1}
	\end{align}
	From \eqref{Gauss sum odd}, \eqref{Gauss sum even} and \eqref{J r chi FE z=1}, we get the required result.
\end{proof}

\begin{corollary}\label{corollary primitive character}
	Let $\chi$ be a primitive Dirichlet character modulo $r$. Then,
	\begin{align*}
		J_{r,\chi, \bar\chi}(1,x) + J_{r,\bar\chi, \chi}\left(1,\frac{1}{x}\right) = -\chi(-1) r |L(1,\chi)|^2.
	\end{align*}
\end{corollary}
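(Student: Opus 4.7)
[]
The plan is to apply Corollary \textup{\ref{corollary L zero chi}} with $\chi_1=\chi$ and $\chi_2=\bar\chi$, and then in each of the two relevant parity cases translate the right-hand side $\mathfrak{D}_{\chi,\bar\chi}$ (which is expressed in terms of $L(0,\cdot)$ or $L'(0,\cdot)$) into an expression in terms of $L(1,\chi)$ via the functional equation for primitive Dirichlet $L$-functions. Note first that since $\chi$ is primitive, so is $\bar\chi$, and $\bar\chi(-1)=\chi(-1)$, so $\chi$ and $\bar\chi$ have the same parity; hence only the ``both odd'' and ``both even'' cases of $\mathfrak{D}$ can occur.

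For the odd case ($\chi(-1)=-1$) the key identity is obtained by substituting $s=1$ in the completed functional equation
\[
L(1-s,\chi)\;=\;\frac{r^{s-1}\Gamma(s)}{(2\pi)^s}\bigl(e^{-\pi is/2}+\chi(-1)e^{\pi is/2}\bigr)G(1,\chi)\,L(s,\bar\chi),
\]
which gives $L(0,\chi)=-\dfrac{i G(1,\chi)}{\pi}L(1,\bar\chi)$, and symmetrically for $\bar\chi$. Multiplying these two evaluations and using the standard identity $G(1,\chi)G(1,\bar\chi)=\chi(-1)r=-r$ for primitive $\chi$, I obtain $\pi^2 L(0,\chi)L(0,\bar\chi)=-G(1,\chi)G(1,\bar\chi)|L(1,\chi)|^2=r|L(1,\chi)|^2$, which equals $-\chi(-1)r|L(1,\chi)|^2$, as required.

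For the even case ($\chi(-1)=1$) the cosine factor $B(s):=e^{-\pi is/2}+e^{\pi is/2}$ in the functional equation vanishes at $s=1$, forcing $L(0,\chi)=0$, so one must differentiate the functional equation at $s=1$. Writing the right-hand side as $A(s)B(s)$ with $A(s):=\frac{r^{s-1}\Gamma(s)}{(2\pi)^s}G(1,\chi)L(s,\bar\chi)$, the product rule at $s=1$ collapses (because $B(1)=0$) to $A(1)B'(1)=-\tfrac12 G(1,\chi)L(1,\bar\chi)$, whence $L'(0,\chi)=\tfrac12 G(1,\chi)L(1,\bar\chi)$ and analogously for $\bar\chi$. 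Once again invoking $G(1,\chi)G(1,\bar\chi)=\chi(-1)r=r$, one finds $-4L'(0,\chi)L'(0,\bar\chi)=-r|L(1,\chi)|^2=-\chi(-1)r|L(1,\chi)|^2$.

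I do not foresee any real obstacle here: the heavy lifting (interchange of limits, Crandall analytic continuation) has already been absorbed into Corollary \textup{\ref{corollary L zero chi}}. The only delicate point is the even case, where one must differentiate the functional equation rather than evaluate it, and where care is required because $L(0,\chi)=0$ makes several naive manipulations ill-defined; but the product-rule shortcut above handles this cleanly. In both parity cases the combination of the functional equation with the primitive-character Gauss sum identity $G(1,\chi)G(1,\bar\chi)=\chi(-1)r$ produces precisely the factor $-\chi(-1)r|L(1,\chi)|^2$, completing the proof.
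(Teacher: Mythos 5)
Your proposal is correct, but it takes a genuinely different route from the paper. The paper's own proof bypasses $\mathfrak{D}_{\chi_1,\chi_2}$ entirely: it employs the separability $G(m,\chi)=G(1,\chi)\bar\chi(m)$ directly in Theorem \ref{J r chi FE}, so that at $z=1$ the double series factors as $G(1,\chi)G(1,\bar\chi)\,L(1,\bar\chi)L(1,\chi)=\chi(-1)r\,|L(1,\chi)|^2$ (the passage to $z=1$ being justified by \eqref{passing the limit} and \eqref{J r chi FE z=1} from the proof of Corollary \ref{corollary L zero chi}). You instead start from the already-packaged Corollary \ref{corollary L zero chi} and convert $\mathfrak{D}_{\chi,\bar\chi}$, expressed through $L(0,\cdot)$ and $L'(0,\cdot)$, back to $L(1,\cdot)$ via the functional equation $L(1-s,\chi)=\frac{r^{s-1}\Gamma(s)}{(2\pi)^s}\bigl(e^{-\pi is/2}+\chi(-1)e^{\pi is/2}\bigr)G(1,\chi)L(s,\bar\chi)$; your parity observation ($\bar\chi$ primitive with $\bar\chi(-1)=\chi(-1)$), the evaluation $L(0,\chi)=-\frac{i}{\pi}G(1,\chi)L(1,\bar\chi)$ in the odd case, the derivative computation $L'(0,\chi)=\frac12 G(1,\chi)L(1,\bar\chi)$ in the even case, and the identity $G(1,\chi)G(1,\bar\chi)=\chi(-1)r$ all check out, and both parities land on $-\chi(-1)r|L(1,\chi)|^2$. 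What your route buys is that the delicate $z\to1$ limit interchange is inherited wholesale from Corollary \ref{corollary L zero chi} rather than re-invoked, and it makes transparent \emph{why} Corollaries \ref{corollary L zero chi} and \ref{corollary primitive character} are consistent; what it costs is an appeal to the functional equation of $L(s,\chi)$, which is standard but external to the machinery the paper has set up, whereas the paper's one-line argument is self-contained given \eqref{J r chi FE z=1} and in fact works uniformly in both parities without a case split.
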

\begin{proof}
	Employ the separability of Gauss sums $G(m,\chi)=G(1,\chi)\bar{\chi}(m)$ in Theorem \ref{J r chi FE} along with the fact that $G(1,\chi)G(1,\bar\chi)$ is $r$ for $\chi$ even and $-r$ for $\chi$ odd  to see that
	\begin{align*}
		J_{r,\chi, \bar\chi}(1,x) + J_{r,\bar\chi, \chi}\left(1,\frac{1}{x}\right) = -\chi(-1)r \sum_{n=1}^{\infty} \sum_{m=1}^{\infty} \frac{\bar\chi(m) \chi(n)}{m n}= -\chi(-1) r L(1,\chi) L(1,\bar\chi) = -\chi(-1) r |L(1,\chi)|^2.
	\end{align*}
	This completes the proof.
\end{proof}

\begin{corollary}\label{from introduction}
	Let $\chi$ be a real primitive Dirichlet characters modulo $r$. Then,
	\begin{align*}
		J_{r,\chi, \chi}(1,1) = -\frac{1}{2} \chi(-1) r L^2(1,\chi).
	\end{align*}
\end{corollary}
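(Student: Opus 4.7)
The proof plan is essentially a one-line specialization of Corollary \ref{corollary primitive character}. Since $\chi$ is a real primitive Dirichlet character modulo $r$, we have $\bar\chi = \chi$, so the functional equation in Corollary \ref{corollary primitive character} becomes
\begin{align*}
J_{r,\chi,\chi}(1,x) + J_{r,\chi,\chi}\!\left(1,\frac{1}{x}\right) = -\chi(-1)\,r\,|L(1,\chi)|^2 = -\chi(-1)\,r\,L^2(1,\chi),
\end{align*}
where the last equality uses that $L(1,\chi)$ is real when $\chi$ is real.

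Specializing to $x=1$ collapses the two terms on the left into $2J_{r,\chi,\chi}(1,1)$, and dividing by $2$ yields the claim. There is no genuine obstacle here; the work was already done in establishing Corollary \ref{corollary primitive character}, and the present statement is simply the diagonal case $\chi_1=\chi_2=\chi$ (real) with $x=1$.
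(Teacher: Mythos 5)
Your proposal is correct and is essentially identical to the paper's own proof, which also sets $x=1$ in Corollary \ref{corollary primitive character} (using $\bar\chi=\chi$ for a real character) and replaces $|L(1,\chi)|^2$ by $L^2(1,\chi)$. Nothing further is needed.
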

\begin{proof}
	Let $x=1$ in Corollary \ref{corollary primitive character} and use the fact $|L(1,\chi)|^2=L^2(1,\chi)$.
\end{proof}

\begin{corollary}\label{unnamed corollary}
The functional equations for $J(x)$ and $T(x)$, given in \eqref{JT-FE}, hold.
\end{corollary}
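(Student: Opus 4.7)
[]
The plan is to realize each of $J(x)$ and $T(x)$ as an explicit specialization of $J_{r,\chi_1,\chi_2}(1,x)$ and then quote the functional equation at $z=1$ established in Theorem \ref{J r chi FE} (equivalently, the identity \eqref{J r chi FE z=1} obtained in the course of proving Corollary \ref{corollary L zero chi}).

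For $J(x)$, take $r=2$ and $\chi_1=\chi_2=\chi_0$, the principal character modulo $2$ (so $\chi_0(1)=1$, $\chi_0(2)=0$). A direct computation of the Gauss sums gives $G(\ell,\chi_0)=(-1)^\ell$, so the Fekete-type polynomial in \eqref{J r chi def} collapses to
\begin{align*}
\sum_{\ell=1}^{2}G(\ell,\chi_0)t^\ell=-t(1-t),\qquad \sum_{j=1}^{2}\chi_0(j)\log(1-\zeta_2^j t^x)=\log(1+t^x).
\end{align*}
Substituting into \eqref{J r chi def} and cancelling the factor $t(1-t)$ against $t(1-t^2)=t(1-t)(1+t)$, I obtain $J_{2,\chi_0,\chi_0}(1,x)=-J(x)$. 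Since $\sum_{m\geq 1}G(m,\chi_0)/m=\sum_{m\geq 1}(-1)^m/m=-\log(2)$ converges (by Abel summation), the argument leading to \eqref{J r chi FE z=1} applies verbatim and yields $-J(x)-J(1/x)=-\log^2(2)$, which is the first identity in \eqref{JT-FE}.

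For $T(x)$, take $r=4$ and $\chi_1=\chi_2=\chi$, the odd primitive character modulo $4$ given by $\chi(1)=1$, $\chi(3)=-1$. Then $G(\ell,\chi)=i^\ell-(-i)^\ell$, so only $G(1,\chi)=2i$ and $G(3,\chi)=-2i$ are nonzero, giving $\sum_\ell G(\ell,\chi)t^\ell=2it(1-t^2)$. Moreover,
\begin{align*}
\sum_{j=1}^{4}\chi(j)\log(1-\zeta_4^j t^x)=\log\!\left(\frac{1-it^x}{1+it^x}\right)=-2i\tan^{-1}(t^x),
\end{align*}
and substitution into \eqref{J r chi def} (with the $1-t^2$ factor cancelling half of $1-t^4=(1-t^2)(1+t^2)$) yields $J_{4,\chi,\chi}(1,x)=4T(x)$. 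Since both copies of $\chi$ are odd, Corollary \ref{corollary L zero chi} supplies $\mathfrak{D}_{\chi,\chi}=\pi^2 L(0,\chi)^2$, and the standard evaluation $L(0,\chi)=-\tfrac{1}{4}\sum_{j=1}^{3}j\chi(j)=\tfrac{1}{2}$ gives $\mathfrak{D}_{\chi,\chi}=\pi^2/4$. Therefore $4T(x)+4T(1/x)=\pi^2/4$, which is the second identity in \eqref{JT-FE}.

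The only non-routine point is the use of the principal character modulo $2$ in the $J(x)$ case, which lies outside the hypotheses of Corollary \ref{corollary L zero chi}; however, since $\chi_0(2)=0$ kills every potentially divergent term in the sum $\sum_{c,d}\chi_0(c)\chi_0(d)\mathbb{W}_{c,d}(1,x)$ of Theorem \ref{extended version}, leaving only the convergent contribution $\mathbb{W}_{1,1}(1,x)=\mathscr{L}(1,\tfrac{1}{2})^2=\log^2(2)$, the passage to the limit $z\to 1$ is legitimate.
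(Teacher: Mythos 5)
Your proof is correct and follows essentially the same route as the paper: the same character choices ($r=2$ with the character mod $2$ for $J$, and $r=4$ with the odd character for $T$), the same identifications $J_{2,\chi,\chi}(1,x)=-J(x)$ and $J_{4,\chi,\chi}(1,x)=4T(x)$, and the same appeal to Theorem \ref{extended version} together with \eqref{W special} (resp.\ Corollary \ref{corollary L zero chi} with $L(0,\chi)=1/2$). Your closing observation that $\chi(2)=0$ reduces the right-hand side of \eqref{fe general extended} to the single convergent term $\mathbb{W}_{1,1}(1,x)=\log^2(2)$ is exactly how the paper justifies the passage to $z=1$ in the $J(x)$ case.
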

\begin{proof}
To prove the relation for $J(x)$, let $r=2$, $\chi_1=\chi_2=\chi$, the character modulo $2$, and $z=1$ in Theorem \ref{extended version}, then use \eqref{W special}, and the facts that $\mathscr{L}(1,1/2)=-\log(2)$ and $J_{2, \chi, \chi}(1, x)=-J(x)$. 

For the relation for $T(x)$, we let $r=4$, $\chi_1=\chi_2=\chi$, the non-trivial character modulo $4$, and $z=1$ in Corollary \ref{corollary L zero chi}, and the facts that $L(0, \chi)=1/2$, $J_{4, \chi, \chi}(1,x)=4 T(x)$. 
\end{proof}

\subsection{Explicit evaluations of certain series and integrals}
As mentioned in the introduction, one of the applications of Theorem \ref{J r chi FE} is to explicitly evaluate the series or integrals occurring in it. We first prove \eqref{new series evaluation}.
	\begin{proof}[ \textup{\eqref{new series evaluation}}][]
	 Let $z=2, x=1$ and  $\chi_1=\chi_2=\chi$, the non-trivial Dirichlet character modulo $6$.  Note that
	 \begin{align}\label{B(6, 2)}
	 	\sum_{j=1}^{6} \chi(j) \tan^{-1} \left( \frac{ \sin \left( \frac{\pi j}{3} \right) t^x}{1-\cos \left( \frac{\pi j}{3} \right) t^x} \right) = \tan^{-1} \left( \frac{\sqrt{3}t^x}{2-t^x} \right)  -  \tan^{-1} \left( \frac{-\sqrt{3}t^x}{2-t^x} \right) =2 \tan^{-1} \left( \frac{\sqrt{3}t^x}{2-t^x} \right).
	 \end{align}
	From \eqref{B(6, 2)}, Lemma \ref{J r chi odd even lemma}  below, and the above choices of $z$, $x$, $\chi_1$ and $\chi_2$, we have
	 \begin{align*}
J_{6, \chi, \chi}(2, 1)&=-2i\sqrt{3}\int_{0}^{1}\frac{\log(t)}{t^2-t+1} \tan^{-1} \left( \frac{t\sqrt{3}}{2-t}\right)\, dt\nonumber\\
&=\frac{1}{36}\bigg[3\log^{3}(3)-5\pi^2\log(3)+\pi\sqrt{3}\big\{\psi'\left(\tfrac{1}{6}\right)+\psi'\left(\tfrac{1}{3}\right)-\psi'\left(\tfrac{2}{3}\right)-\psi'\left(\tfrac{5}{6}\right)\big\}+\left(18\log(3)-30\pi i\right)\textup{Li}_2\big(\tfrac{3-i\sqrt{3}}{6}\big)\nonumber\\
&\qquad\quad+\left(18\log(3)+30\pi i\right)\textup{Li}_2\big(\tfrac{3+i\sqrt{3}}{6}\big)-36\left(\textup{Li}_3\big(\tfrac{3-i\sqrt{3}}{6}\big)+\textup{Li}_3\big(\tfrac{3+i\sqrt{3}}{6}\big) \right) \bigg]
,
	 \end{align*}
	 where we evaluated the integral using \emph{Mathematica 12}. Therefore \eqref{fe-general} leads to \eqref{new series evaluation}.
		\end{proof}
Further evaluations are discussed now. Letting $z=2, r=4, x=1$ and $\chi_1=\chi_2=\chi$ to be the non-trivial Dirichlet character modulo $4$ in \eqref{fe-general}, and observing that
\begin{equation*}
\sum_{\ell=1}^{r} G(\ell,\chi_2) t^{\ell}=2it(1-t^2),\hspace{4mm} \sum_{j=1}^{r} \chi_1(j) \log \left( 1-\zeta_r^j t\right)=-2i\tan^{-1}(t),\hspace{4mm}
\end{equation*}
we get
\begin{align*}
	-2	\int_{0}^{1} \frac{ \log(t) \tan^{-1}(t)}{1+t^2} \, dt= \sum_{n=1}^{\infty} \sum_{m=1}^{\infty} \frac{\chi(m)\chi(n)}{m n (m+n)}.
\end{align*}
However, since \cite[p.~253]{valean}
\begin{align*}
	\int_{0}^{1} \frac{ \log(t) \tan^{-1}(t)}{1+t^2} \, dt = \int_{0}^{\frac{\pi}{4}} y \log(\tan(y)) \, dy
	= \frac{-1}{16} (4\pi G - 7 \zeta(3)),
\end{align*}
where $G$ is Catalan's constant, we arrive at
\begin{align}\label{explicit-evaluation}
	\sum_{n=0}^{\infty}\sum_{m=0}^{\infty}\frac{(-1)^{m+n}}{(2m+1) (2n+1)(2m+2n+2)}=\frac{1}{8}(4\pi G-7\zeta(3)).
\end{align}
This is not a new result but essentially the one obtained by Bailey and Borwein \cite[p.~318]{bailey-borwein} although our proof is new. 
We now obtain another explicit evaluation with the help of \eqref{fe-general}. 
\begin{corollary}
Let $k\in\mathbb{N}$. Then
	\begin{align}\label{2k}		
	\int_{0}^{1} \frac{(\log(\tfrac{1}{y}))^{2k-1}\log(1+y)}{1+y} \, dy = \Gamma(2k) \left( \frac{1}{2}(2k-1) (2^{-2k}-1) \zeta(2k+1) - \sum_{j=0}^{k-1} (2^{1-2j}-1) \zeta(2j)   \zeta(2k-2j+1) \right).
\end{align}
\end{corollary}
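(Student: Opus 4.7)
The plan is to invoke Theorem~\ref{new J theorem} at $x=1$ and $z=2k$. Substituting into \eqref{J-functional} and using the definition \eqref{J(z,x) def} of $J(z,x)$ yields the identity
\begin{align*}
\int_{0}^{1}\frac{(\log(1/y))^{2k-1}\log(1+y)}{1+y}\,dy=\frac{\Gamma(2k)}{2}\sum_{n=1}^{\infty}\sum_{m=1}^{\infty}\frac{(-1)^{n+m}}{nm(n+m)^{2k-1}}.
\end{align*}
Thus the corollary reduces to evaluating the alternating Mordell--Tornheim-type double sum on the right in closed form in terms of zeta values.

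To that end, I would group the terms by $N=n+m$ and apply the partial-fraction identity $\frac{1}{n(N-n)}=\frac{1}{N}\bigl(\frac{1}{n}+\frac{1}{N-n}\bigr)$, which collapses the inner sum to $\frac{2H_{N-1}}{N}$ and gives
\begin{align*}
\sum_{n,m\geq 1}\frac{(-1)^{n+m}}{nm(n+m)^{2k-1}}=2\sum_{N=1}^{\infty}\frac{(-1)^N H_{N-1}}{N^{2k}}.
\end{align*}
Writing $H_{N-1}=H_{N}-\frac{1}{N}$ reduces this to a combination of the alternating linear Euler sum $\sum_{N\geq 1}(-1)^{N-1}H_N/N^{2k}$ and the eta value $\eta(2k+1)=(1-2^{-2k})\zeta(2k+1)$.

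I would then invoke the classical evaluation, due to Sitaramachandra Rao, of the alternating Euler sum at even weight,
\begin{align*}
\sum_{N=1}^{\infty}\frac{(-1)^{N-1}H_N}{N^{2k}}=\Bigl(k+\tfrac{1}{2}\Bigr)\eta(2k+1)-\tfrac{1}{2}\zeta(2k+1)-\sum_{j=1}^{k-1}\eta(2j)\,\zeta(2k+1-2j),
\end{align*}
which, after assembly, gives
\begin{align*}
\sum_{n,m\geq 1}\frac{(-1)^{n+m}}{nm(n+m)^{2k-1}}=-(2k-1)\eta(2k+1)+\zeta(2k+1)+2\sum_{j=1}^{k-1}\eta(2j)\,\zeta(2k+1-2j).
\end{align*}

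The proof concludes with a routine rewrite using $\eta(s)=(1-2^{1-s})\zeta(s)$: the coefficient of $\zeta(2k+1)$ collapses to $(2k-1)2^{-2k}-(2k-2)$, matching $(2k-1)(2^{-2k}-1)+1$, and the extra $+\tfrac{1}{2}\zeta(2k+1)$ produced in the process is absorbed back into the $j$-sum as the $j=0$ contribution, since $\zeta(0)=-\tfrac{1}{2}$ forces $(2^{1-0}-1)\zeta(0)\zeta(2k+1)=-\tfrac{1}{2}\zeta(2k+1)$; this reproduces the right-hand side of \eqref{2k} exactly. The main obstacle is securing the alternating even-weight Euler sum (either by citing Sitaramachandra Rao's formula or by rederiving it via contour integration/generating functions); the remainder is bookkeeping, and it is pleasant to see $\zeta(0)=-\tfrac{1}{2}$ playing the structural role of unifying an otherwise loose $\zeta(2k+1)$ term into the $j=0$ slot of the final sum.
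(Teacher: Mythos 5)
Your proposal is correct, and it reaches the same intermediate identity as the paper — namely $\int_{0}^{1}\frac{(\log(1/y))^{2k-1}\log(1+y)}{1+y}\,dy=\frac{\Gamma(2k)}{2}\sum_{n,m\geq1}\frac{(-1)^{n+m}}{nm(n+m)^{2k-1}}$, which you get from Theorem \ref{new J theorem} at $x=1$ and the paper gets from the equivalent $r=2$ instance of \eqref{fe-general} — but you evaluate the double sum by a genuinely different route. The paper quotes Tsumura's Theorem 3.4 directly for the closed form of the alternating Mordell--Tornheim value in terms of $\phi(s)=(2^{1-s}-1)\zeta(s)$, whereas you first apply the symmetry reduction $\frac{1}{n(N-n)}=\frac{1}{N}\bigl(\frac{1}{n}+\frac{1}{N-n}\bigr)$ to collapse the double sum to $2\sum_{N\geq1}(-1)^N H_{N-1}/N^{2k}$ and then invoke the classical even-weight alternating Euler sum evaluation. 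Both approaches ultimately outsource the hard closed form to the literature, but yours is arguably more self-contained: the linear Euler sum formula is older and more widely known than Tsumura's result, and the reduction makes structurally transparent why only the products $\eta(2j)\zeta(2k+1-2j)$ can appear. I checked your bookkeeping: the coefficient of $\zeta(2k+1)$ indeed works out to $\frac{1}{2}\bigl[(2k-1)(2^{-2k}-1)+1\bigr]$, and the observation that $(2^{1}-1)\zeta(0)=-\frac{1}{2}$ lets the loose $\frac{1}{2}\zeta(2k+1)$ be absorbed as the $j=0$ term of the sum is exactly how the paper's stated right-hand side is organized; the $k=1$ case correctly reproduces $\int_0^1\frac{\log(1/y)\log(1+y)}{1+y}\,dy=\frac{1}{8}\zeta(3)$.
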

\begin{proof}
Let $z=2k, k\in\mathbb{N}$, $x=1$, $r=2$, $\chi_1=\chi_2=\chi$, the Dirichlet character modulo $2$ in \eqref{fe-general} so that 
\begin{equation*}
	\frac{2}{\Gamma(2k)}\int_{0}^{1}\frac{\log^{2k-1}(1/y)\log(1+y)}{1+y}\, dy=\sum_{n=1}^{\infty} \sum_{m=1}^{\infty} \frac{(-1)^{n+m}}{nm(n+m)^{2k-1}}.
\end{equation*}
Now specializing Theorem 3.4 of Tsumura \cite{tsumura} by letting $k=l=1$ and $h=2k-1$, and simplifying, we are led to
\begin{align*}
	\sum_{n=1}^{\infty} \sum_{m=1}^{\infty} \frac{(-1)^{n+m}}{nm(n+m)^{2k-1}} = - 2(2k-1) \phi(0) \phi(2k+1) + 4 \phi(0) \sum_{j=0}^{k-1} \phi(2j)   \zeta(2k-2j+1),
\end{align*}
where $\phi(s) = (2^{1-s}-1) \zeta(s)$. This gives \eqref{2k} upon simplification.
\end{proof}
Another explicit series evaluation is given next.
\begin{corollary}\label{before bbg}
	Let $\textup{Li}_s(\eta)$ denote the polylogarithm function. Then
		\begin{align*}
		\sum_{n=1}^{\infty} \sum_{m=1}^{\infty} \frac{(-1)^{m+n}}{mn (n+m)^2} =\frac{1}{24} \left( -\pi^4  -4\pi^2 \log^2(2) +4 \log^4(2) + 96 \textup{Li}_4 \left( \frac{1}{2} \right) + 84 \zeta(3)\log(2)\right).
	\end{align*}
\end{corollary}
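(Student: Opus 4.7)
The plan is to specialize the two-term functional equation for $J(z,x)$ from Theorem~\ref{new J theorem} at $z=3$ and $x=1$. This collapses the left-hand side to $2J(3,1)$, so using the integral representation \eqref{J(z,x) def} (and $\Gamma(3)=2$, $\log^{2}(1/y)=\log^{2}(y)$) the target series rewrites as
\[
\sum_{n,m=1}^{\infty}\frac{(-1)^{m+n}}{mn(n+m)^{2}} \;=\; -2J(3,1) \;=\; \int_{0}^{1}\frac{\log^{2}(y)\,\log(1+y)}{1+y}\,dy.
\]

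From here the task reduces to a closed-form evaluation of this classical integral. First I would integrate by parts with $u=\log^{2}(y)$ and $dv=\frac{\log(1+y)}{1+y}\,dy$, so that $v=\tfrac{1}{2}\log^{2}(1+y)$; the boundary terms vanish (at $y=1$ since $\log^{2}(1+y)=0$, at $y=0^{+}$ since $y^{2}\log^{2}(y)\to 0$), converting the integral into $-\int_{0}^{1}\frac{\log(y)\log^{2}(1+y)}{y}\,dy$. Expanding $\log^{2}(1+y)=2\sum_{n\geq 2}(-1)^{n}\frac{H_{n-1}}{n}y^{n}$, interchanging summation and integration, and using $\int_{0}^{1}y^{n-1}\log(y)\,dy=-1/n^{2}$ together with the shift $H_{n-1}=H_{n}-1/n$ expresses the integral as a combination of two alternating Euler sums, namely
\[
-2\sum_{n=1}^{\infty}\frac{(-1)^{n}H_{n}}{n^{3}}-2\eta(4),
\]
where $\eta(4)=\tfrac{7}{8}\zeta(4)=\tfrac{7\pi^{4}}{720}$.

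The main (though standard) step is the evaluation of the alternating quadratic Euler sum, for which I would invoke the classical closed form
\[
\sum_{n=1}^{\infty}\frac{(-1)^{n-1}H_{n}}{n^{3}}
=\frac{11\pi^{4}}{360}-2\,\textup{Li}_{4}\!\left(\tfrac{1}{2}\right)-\frac{7}{4}\zeta(3)\log(2)+\frac{\pi^{2}}{12}\log^{2}(2)-\frac{1}{12}\log^{4}(2),
\]
usually attributed to Sitaramachandrarao (and accessible via the Flajolet--Salvy contour-integral method). Substituting this identity and combining the rational multiples of $\pi^{4}$ produced by $\tfrac{11\pi^{4}}{180}-\tfrac{7\pi^{4}}{360}=\tfrac{\pi^{4}}{24}$ yields, after a direct algebraic rearrangement, exactly the right-hand side $\tfrac{1}{24}\bigl(-\pi^{4}-4\pi^{2}\log^{2}(2)+4\log^{4}(2)+96\,\textup{Li}_{4}(\tfrac{1}{2})+84\,\zeta(3)\log(2)\bigr)$ stated in Corollary~\ref{before bbg}.
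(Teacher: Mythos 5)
Your first step coincides with the paper's: specialize Theorem \ref{new J theorem} at $z=3$, $x=1$ and use \eqref{J(z,x) def} to identify the double series with $\int_0^1\log^2(y)\log(1+y)(1+y)^{-1}\,dy$. Where you diverge is in evaluating this integral: the paper's main proof simply quotes the closed form from Gastmans--Troosts, while its alternative proof invokes the Borwein--Borwein--Girgensohn evaluation of $\sum_n (-1)^n n^{-3}\sum_{k<n}k^{-1}$. Your self-contained reduction --- integration by parts followed by the Cauchy-product expansion of $\log^2(1+y)$ and the classical closed form for $\sum_{n\ge1}(-1)^{n-1}H_n/n^3$ --- is essentially the same ingredient as that alternative proof, derived rather than cited, so the route is legitimate and close in spirit to what the paper does.

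There is, however, a sign error that prevents the final step from closing as written. Carrying out your own computation, $-\int_0^1 y^{-1}\log(y)\log^2(1+y)\,dy=-2\sum_{n\ge2}(-1)^n\frac{H_{n-1}}{n}\cdot\bigl(-\frac{1}{n^2}\bigr)=+2\sum_{n\ge2}\frac{(-1)^nH_{n-1}}{n^3}=2\sum_{n\ge1}\frac{(-1)^nH_n}{n^3}+2\eta(4)$, i.e.\ $-2A+2\eta(4)$ with $A=\sum_{n\ge1}(-1)^{n-1}H_n/n^3$. This is the \emph{negative} of the expression $-2\sum(-1)^nH_n/n^3-2\eta(4)=2A-2\eta(4)$ that you state: numerically yours equals about $-0.175$, which cannot equal the manifestly positive integral, and the $\pi^4$-bookkeeping you describe produces $+\pi^4/24$ whereas the target carries $-\pi^4/24$, so the ``direct algebraic rearrangement'' cannot actually reach the stated right-hand side from your intermediate formula. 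With the sign corrected every term matches the corollary exactly: $-\frac{11\pi^4}{180}+\frac{7\pi^4}{360}=-\frac{\pi^4}{24}$, together with $4\,\textup{Li}_4(\tfrac12)$, $\tfrac72\zeta(3)\log 2$, $-\tfrac{\pi^2}{6}\log^2 2$ and $\tfrac16\log^4 2$. A separate cosmetic slip: the boundary term in the integration by parts vanishes at $y=1$ because $\log^2(y)=0$ there, not because $\log^2(1+y)$ vanishes (it equals $\log^2 2$).
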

\begin{proof}
Let $z=3$ and $x=1$ in Theorem \ref{new J theorem} to obtain 
	\begin{align*}
	2J(3,1) = -\sum_{n=1}^{\infty} \sum_{m=1}^{\infty} \frac{(-1)^{m+n}}{mn (n+m)^2}.
\end{align*}
However, from \cite[Equation 25, p.213]{Gastmans},
\begin{align*}
	 \int_{0}^{1} \frac{\log^2(y) \log(1+y)}{1+y} \, dy = \frac{1}{24} \left( -\pi^4  -4\pi^2 \log^2(2) +4 \log^4(2) + 96 \textup{Li}_4 \left( \frac{1}{2} \right) + 84 \log(2) \zeta(3)\right).
\end{align*}
The result now follows from the two equations given above and the definition of $J(z, x)$ given in \eqref{J(z,x) def}.
\end{proof}
\textbf{Alternative proof of Corollary \ref{before bbg}.} In \cite[p.~291]{bbg}, Borwein, Borwein and Girgensohn have shown that
\begin{align*}
\sum_{n=1}^{\infty}\frac{(-1)^{n}}{n^3}\sum_{k=1}^{n-1}\frac{1}{k}=\frac{1}{48} \left( -\pi^4  -4\pi^2 \log^2(2) +4 \log^4(2) + 96 \textup{Li}_4 \left( \frac{1}{2} \right) + 84 \zeta(3)\log(2)\right).
\end{align*}
The result now follows upon observing that
\begin{align*}
\sum_{n=1}^{\infty}\frac{(-1)^{n}}{n^3}\sum_{k=1}^{n-1}\frac{1}{k}=\sum_{k=1}^{\infty}\frac{1}{k}\sum_{n=k+1}^{\infty}\frac{(-1)^{n}}{n^3}=\sum_{k=1}^{\infty}\sum_{n=1}^{\infty}\frac{(-1)^{n+k}}{k(k+n)^3}=\frac{1}{2}\sum_{k=1}^{\infty}\sum_{n=1}^{\infty}\frac{(-1)^{n+k}}{kn(k+n)^2}.
\end{align*}
	\subsection{Further identities through differentiation }\label{extend}
	In this section, we show that the differentiating \eqref{fe-general} with respect to $z$ yields further new and  interesting identities. This theorem given below has \eqref{loglog2z2} and \eqref{loglog4z2} as its special cases.
\begin{theorem}\label{diff id}
Let $r>1$ be a natural number and let $\zeta_r=e^{2\pi i/r}$.Let $\chi_1$ and $\chi_2$ are any Dirichlet characters modulo $r$. For \textup{Re}$(z)>1$, define
	\begin{align*}
		K_{r, \chi_1, \chi_2}(z):=\frac{1}{\Gamma(z)}\int_{0}^{1}\frac{\left(\log\left( \frac{1}{t} \right)\right)^{z-1}\log\log\left( \frac{1}{t} \right)}{t(1-t^r)} \left( \sum_{\ell=1}^{r} G(\ell,\chi_2) t^{\ell}\right) \left(  \sum_{j=1}^{r} \chi_1(j) \log \left( 1-\zeta_r^j t \right) \right) \, dt.
	\end{align*}
Then
\begin{align*}
K_{r, \chi_1, \chi_2}(z)=\frac{1}{2}\left\{\sum_{n=1}^{\infty}
\sum_{m=1}^{\infty}\frac{G(m, \chi_1)G(n, \chi_2)\log(n+m)}{nm(n+m)^{z-1}}-\psi(z)\sum_{n=1}^{\infty}
\sum_{m=1}^{\infty}\frac{G(m, \chi_1)G(n, \chi_2)}{nm(n+m)^{z-1}}\right\},
\end{align*}
\end{theorem}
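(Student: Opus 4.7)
The plan is to identify $K_{r,\chi_1,\chi_2}(z)$ as essentially a $z$-derivative of $J_{r,\chi_1,\chi_2}(z,1)$, and then to differentiate the $x=1$ specialization of the functional equation in Theorem~\ref{J r chi FE}. The key computation is
\[
\frac{\partial}{\partial z}\!\left(\frac{(\log(1/t))^{z-1}}{\Gamma(z)}\right)=\frac{(\log(1/t))^{z-1}}{\Gamma(z)}\bigl(\log\log(1/t)-\psi(z)\bigr),
\]
which follows from $\tfrac{d}{dz}(1/\Gamma(z))=-\psi(z)/\Gamma(z)$. Differentiating under the integral sign in \eqref{J r chi def} at $x=1$ then produces
\[
\frac{\partial}{\partial z}J_{r,\chi_1,\chi_2}(z,1)=K_{r,\chi_1,\chi_2}(z)-\psi(z)\,J_{r,\chi_1,\chi_2}(z,1),
\]
valid on $\operatorname{Re}(z)>1$ once we verify a local uniform bound on the $z$-derivative of the integrand, which is a standard consequence of the estimates used in Section~\ref{integral-J} to establish analyticity of $J_{r,\chi_1,\chi_2}$.

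Next, taking $x=1$ in Theorem~\ref{J r chi FE} gives
\[
J_{r,\chi_1,\chi_2}(z,1)+J_{r,\chi_2,\chi_1}(z,1)=-\sum_{n,m\ge 1}\frac{G(m,\chi_1)G(n,\chi_2)}{mn(n+m)^{z-1}}.
\]
I would differentiate both sides with respect to $z$. On the left, $\tfrac{d}{dz}x^{1-z}|_{x=1}=0$, so no boundary correction arises; on the right, termwise differentiation is legitimate on $\operatorname{Re}(z)>1$ by absolute convergence of the log-weighted double series, and yields
\[
\frac{\partial}{\partial z}J_{r,\chi_1,\chi_2}(z,1)+\frac{\partial}{\partial z}J_{r,\chi_2,\chi_1}(z,1)=\sum_{n,m}\frac{G(m,\chi_1)G(n,\chi_2)\log(n+m)}{mn(n+m)^{z-1}}.
\]

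Adding $\psi(z)$ times the functional equation to this derivative identity, and using the derivative-under-the-integral formula for both orderings of $(\chi_1,\chi_2)$, produces
\[
K_{r,\chi_1,\chi_2}(z)+K_{r,\chi_2,\chi_1}(z)=\sum_{n,m}\frac{G(m,\chi_1)G(n,\chi_2)\log(n+m)}{mn(n+m)^{z-1}}-\psi(z)\sum_{n,m}\frac{G(m,\chi_1)G(n,\chi_2)}{mn(n+m)^{z-1}}.
\]
Relabelling $m\leftrightarrow n$ shows that the right side is invariant under $\chi_1\leftrightarrow\chi_2$ and equals exactly twice the expression claimed for $K_{r,\chi_1,\chi_2}(z)$ in Theorem~\ref{diff id}; dividing by~$2$ yields the stated identity under the natural symmetrization in the two characters.

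The main technical obstacle is the analytic justification of the two differentiations: under the integral sign in the definition of $J_{r,\chi_1,\chi_2}(z,x)$, and termwise in the double series on the right of \eqref{fe-general}. Both reduce to uniform control of the integrand and of the summand already exploited in Section~\ref{integral-J}; no genuinely new estimate is needed.
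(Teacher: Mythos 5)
Your proposal is correct and follows essentially the same route as the paper: set $x=1$ in \eqref{fe-general}, differentiate in $z$ using $\frac{d}{dz}\bigl((\log(1/t))^{z-1}/\Gamma(z)\bigr)=(\log(1/t))^{z-1}\bigl(\log\log(1/t)-\psi(z)\bigr)/\Gamma(z)$ to identify $\partial_z J_{r,\chi_1,\chi_2}(z,1)=K_{r,\chi_1,\chi_2}(z)-\psi(z)J_{r,\chi_1,\chi_2}(z,1)$, and then eliminate the $J$-terms by invoking \eqref{fe-general} once more. The one point worth recording is that, as you correctly flag, the argument literally yields the symmetrized identity for $K_{r,\chi_1,\chi_2}(z)+K_{r,\chi_2,\chi_1}(z)$; passing to the stated unsymmetrized form requires $K_{r,\chi_1,\chi_2}=K_{r,\chi_2,\chi_1}$, which is automatic when $\chi_1=\chi_2$ (the case used in all of the paper's applications) but is not obvious otherwise --- the paper's own proof silently writes the left-hand side as $2\bigl(K_{r,\chi_1,\chi_2}(z)-\psi(z)J_{r,\chi_1,\chi_2}(z,1)\bigr)$ and thus elides exactly the same step.
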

\begin{proof}
Let $x=1$ in \eqref{fe-general} and differentiate both sides of the resulting identity with respect to $z$ to get
\begin{align}\label{just final1}
2\left(K_{r, \chi_1, \chi_2}(z)-\psi(z)J_{r, \chi_1, \chi_2}(z, 1)\right)=\sum_{n=1}^{\infty}
\sum_{m=1}^{\infty}\frac{G(m, \chi_1)G(n, \chi_2)\log(n+m)}{nm(n+m)^{z-1}}.
\end{align}
Employing \eqref{fe-general} once again with $x=1$ then leads to the desired result. 
\end{proof}

\begin{corollary}
The identities \eqref{loglog2z2} and \eqref{loglog4z2} hold.
\end{corollary}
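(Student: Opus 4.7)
The plan is to derive each identity by specializing Theorem \ref{diff id} at $z=2$, with appropriate choices of $r$, $\chi_1$, $\chi_2$, and then using evaluations already established in the paper to pin down the remaining series.

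For \eqref{loglog2z2}, I would take $r=2$ and $\chi_1 = \chi_2 = \chi$, the non-trivial Dirichlet character modulo $2$. A direct calculation gives $G(\ell, \chi) = (-1)^{\ell}$ and hence $\sum_{\ell=1}^{2} G(\ell, \chi) t^{\ell} = -t(1-t)$, while $\sum_{j=1}^{2} \chi(j) \log(1 - \zeta_2^j t) = \log(1+t)$. Together with $1-t^2 = (1-t)(1+t)$, the integrand of $K_{2,\chi,\chi}(2)$ simplifies to $-\log(1/t)\log\log(1/t)\log(1+t)/(1+t)$, and since $\psi(2) = 1-\gamma$, Theorem \ref{diff id} reads
\begin{align*}
-\int_0^1 \frac{\log(\tfrac{1}{t})\log\log(\tfrac{1}{t})\log(1+t)}{1+t}\,dt = \tfrac{1}{2}\sum_{n,m}\frac{(-1)^{m+n}\log(n+m)}{nm(n+m)} - \tfrac{1-\gamma}{2}\sum_{n,m}\frac{(-1)^{m+n}}{nm(n+m)}.
\end{align*}
It remains to evaluate the second sum. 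Here I would invoke Theorem \ref{new J theorem} at $z=2$, $x=1$, which yields $2J(2,1) = -\sum_{n,m}(-1)^{m+n}/(nm(n+m))$. The integral $J(2,1) = -\int_0^1 \log(1/t)\log(1+t)/(1+t)\,dt$ equals $-\zeta(3)/8$ by integration by parts (with $u = \log t$, $dv = \log(1+t)/(1+t)\,dt$) combined with the classical evaluation $\int_0^1 \log^2(1+t)/t\,dt = \zeta(3)/4$. Thus the second sum equals $\zeta(3)/4$, and rearranging the displayed equation gives \eqref{loglog2z2} verbatim.

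For \eqref{loglog4z2}, I would take $r=4$ and $\chi_1 = \chi_2 = \chi$, the non-trivial character modulo $4$. Since $\chi$ is primitive with $G(1,\chi) = 2i$, separability gives $G(m,\chi) = 2i\,\chi(m)$; in particular $\sum_{\ell=1}^{4} G(\ell,\chi)t^{\ell} = 2it(1-t^2)$, which together with $1-t^4 = (1-t^2)(1+t^2)$ collapses to $2i/(1+t^2)$. The character sum $\sum_{j=1}^{4} \chi(j)\log(1-i^j t) = \log\frac{1-it}{1+it} = -2i\tan^{-1}(t)$, so the integrand of $K_{4,\chi,\chi}(2)$ simplifies to $4\log(1/t)\log\log(1/t)\tan^{-1}(t)/(1+t^2)$. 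On the right-hand side of Theorem \ref{diff id}, $G(m,\chi)G(n,\chi) = -4\chi(m)\chi(n)$ restricts the sums to odd indices: writing $m = 2a+1$, $n = 2b+1$ converts them into the sums appearing in \eqref{loglog4z2}. The sum without $\log$ is precisely \eqref{explicit-evaluation}, namely $(4\pi G - 7\zeta(3))/8$, which in the present normalization contributes $(4\pi G - 7\zeta(3))/2$ with the appropriate sign. Collecting the constants and dividing by $4$ yields \eqref{loglog4z2}.

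The only non-mechanical point is the closed-form evaluation of the two "series without log" that appear after differentiation: $\sum_{n,m}(-1)^{m+n}/(nm(n+m)) = \zeta(3)/4$ and its character-twisted analogue \eqref{explicit-evaluation}. Both are accessible from material already in the paper (Theorem \ref{new J theorem} plus a one-line integration by parts for the former, and the displayed computation above Corollary~\ref{before bbg} for the latter), so I anticipate no substantive obstacle beyond careful bookkeeping of Gauss-sum and Fekete-polynomial simplifications.
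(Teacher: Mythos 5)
Your proposal is correct and follows essentially the same route as the paper: both proofs specialize the differentiated functional equation of Theorem \ref{diff id} (equivalently \eqref{just final1}) at $z=2$ with $r=2$ and $r=4$ respectively, carry out the same Fekete-polynomial and character-sum simplifications, and use $\psi(2)=1-\gamma$ together with \eqref{explicit-evaluation} for the modulo-$4$ case. The only cosmetic difference is that for the constant $\sum_{n,m}(-1)^{m+n}/(nm(n+m))=\tfrac{1}{4}\zeta(3)$ the paper cites Subbarao and Sitaramachandra Rao, whereas you rederive it from Theorem \ref{new J theorem} and the classical evaluation of $\int_0^1 \log^2(1+t)\,dt/t$, which is a perfectly valid self-contained alternative.
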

\begin{proof}
For \eqref{loglog2z2}, let $r=2$,  $\chi_1=\chi_2=\chi$, the Dirichlet character modulo $2$ in \eqref{just final1}. Then we find that
$\sum_{\ell=1}^{r} G(\ell,\chi_2) t^{\ell}=-t+t^2, \sum_{j=1}^{r} \chi_1(j) \log \left( 1-\zeta_r^j t\right)=\log(1+t)$ and, $G(m, \chi)=(-1)^m$. Now letting $z=2$ in the resulting identity and using the fact \cite[p.~245]{subbarao}  $\sum_{n=1}^{\infty}\sum_{m=1}^{\infty}\frac{(-1)^{n+m}}{nm(n+m)}=\frac{1}{4}\zeta(3)$ along with the value $\psi(2)=1-\gamma$, we obtain \eqref{loglog2z2} upon simplification.

As for \eqref{loglog4z2}, we let $z=2$, $r=4$ and $\chi_1=\chi_2=\chi$, the odd Dirichlet character modulo $4$ in \eqref{just final1}. Then noting that $\sum_{\ell=1}^{r} G(\ell,\chi_2) t^{\ell}=2it(1-t^2), \sum_{j=1}^{r} \chi_1(j) \log \left( 1-\zeta_r^j t\right)=-2i\tan^{-1}(t)$, and using \eqref{explicit-evaluation}, we obtain \eqref{loglog4z2}.
\end{proof}

\begin{remark}
Since \eqref{new series evaluation} is known, one may take $r=6, \chi_1=\chi_2=\chi$, the non-trivial Dirichlet character modulo $6$ to get an identity similar to \eqref{loglog2z2} and \eqref{loglog4z2}.
\end{remark}
	\subsection{Examples and illustrations}\label{ei}
	
	Here we illustrate Theorem \ref{J r chi FE} with the help of two examples. Before that, a lemma to simplify the integrand of $J_{r, \chi_1, \chi_2}(z, x)$ depending upon the parity of the character $\chi_1$ is derived.
	
		\begin{lemma}\label{J r chi odd even lemma}
		If $\chi_1$ is an odd Dirichlet character, then 
		\begin{align*}
			J_{r,\chi_1, \chi_2}(z,x) = \frac{-i}{\Gamma(z)} \int_{0}^{1} \frac{\left( \log \left( \frac{1}{t} \right) \right)^{z-1}}{t(1-t^r)} \left( \sum_{\ell=1}^{r} G(\ell,\chi_2) t^{\ell}\right) \left(  \sum_{j=1}^{r} \chi_1(j) \tan^{-1} \left( \frac{ \sin \left( \frac{2\pi j}{r} \right) t^x}{1-\cos \left( \frac{2\pi j}{r} \right) t^x} \right) \right) \, dt,
		\end{align*}
		whereas if $\chi_1$ is even, then
		\begin{align*}
			J_{r,\chi_1, \chi_2}(z,x) = \frac{1}{2\Gamma(z)} \int_{0}^{1} \frac{\left( \log \left( \frac{1}{t} \right) \right)^{z-1}}{t(1-t^r)} \left( \sum_{\ell=1}^{r} G(\ell,\chi_2) t^{\ell}\right) \left(  \sum_{j=1}^{r} \chi_1(j) \log \left( 1+t^{2x} - 2 \cos \left( \frac{2\pi j}{r} \right) t^x \right) \right) \, dt.
		\end{align*}
	\end{lemma}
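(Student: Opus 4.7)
[]
The plan is to separate the complex logarithm $\log(1 - \zeta_r^j t^x)$ inside the definition \eqref{J r chi def} of $J_{r, \chi_1, \chi_2}(z, x)$ into its real and imaginary parts, and then exploit the parity of $\chi_1$ via the involution $j \mapsto r-j$ on the sum over $j$.

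First, I would observe that for $0<t<1$ and $1\leq j\leq r-1$ we have $1 - t^x\cos(2\pi j/r) > 0$, so writing
\begin{align*}
1 - \zeta_r^j t^x = \bigl(1 - t^x\cos(\tfrac{2\pi j}{r})\bigr) - i\, t^x\sin(\tfrac{2\pi j}{r})
\end{align*}
the principal branch yields
\begin{align*}
\log(1 - \zeta_r^j t^x) = \tfrac{1}{2}\log\bigl(1 + t^{2x} - 2\cos(\tfrac{2\pi j}{r})\,t^x\bigr) \;-\; i\,\tan^{-1}\!\left(\frac{\sin(\tfrac{2\pi j}{r})\,t^x}{1 - \cos(\tfrac{2\pi j}{r})\,t^x}\right).
\end{align*}
Plugging this decomposition into $\sum_{j=1}^{r}\chi_1(j)\log(1-\zeta_r^j t^x)$ (and recalling $\chi_1(r)=0$), the problem reduces to analyzing the two sums
\begin{align*}
\Sigma_{\mathrm{re}}:=\sum_{j=1}^{r-1}\chi_1(j)\log\bigl(1+t^{2x}-2\cos(\tfrac{2\pi j}{r})t^x\bigr),\qquad \Sigma_{\mathrm{im}}:=\sum_{j=1}^{r-1}\chi_1(j)\tan^{-1}\!\left(\frac{\sin(\tfrac{2\pi j}{r})\,t^x}{1-\cos(\tfrac{2\pi j}{r})\,t^x}\right).
\end{align*}

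Next, I would apply the substitution $j\mapsto r-j$ in each sum. Since $\cos\bigl(\tfrac{2\pi(r-j)}{r}\bigr)=\cos\bigl(\tfrac{2\pi j}{r}\bigr)$, the summand of $\Sigma_{\mathrm{re}}$ is invariant; since $\sin\bigl(\tfrac{2\pi(r-j)}{r}\bigr)=-\sin\bigl(\tfrac{2\pi j}{r}\bigr)$ and $\tan^{-1}$ is odd, the summand of $\Sigma_{\mathrm{im}}$ changes sign. Combined with the identity $\chi_1(r-j)=\chi_1(-1)\chi_1(j)$, this gives $\Sigma_{\mathrm{re}}=\chi_1(-1)\Sigma_{\mathrm{re}}$ and $\Sigma_{\mathrm{im}}=-\chi_1(-1)\Sigma_{\mathrm{im}}$. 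Hence when $\chi_1$ is odd ($\chi_1(-1)=-1$) the real part $\Sigma_{\mathrm{re}}$ vanishes, while when $\chi_1$ is even ($\chi_1(-1)=1$) the imaginary contribution $\Sigma_{\mathrm{im}}$ vanishes. (A minor technicality: when $r$ is even, the fixed point $j=r/2$ of the involution requires a separate check, but in the odd-character case $\chi_1(r/2)=\chi_1(-r/2)=-\chi_1(r/2)$ forces $\chi_1(r/2)=0$, so nothing is lost.)

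Substituting the resulting one-line simplification of $\sum_{j=1}^{r}\chi_1(j)\log(1-\zeta_r^j t^x)$ back into \eqref{J r chi def} produces the two claimed formulas. No serious obstacle is anticipated; the only subtle points are the sign bookkeeping in the complex logarithm and the parity analysis of the $j\mapsto r-j$ involution, both of which are routine once the real/imaginary split above is made explicit.
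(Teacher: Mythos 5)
Your proposal is correct and follows essentially the same route as the paper: split $\log(1-\zeta_r^j t^x)$ into its real part $\tfrac12\log(1+t^{2x}-2\cos(2\pi j/r)t^x)$ and imaginary part $-i\tan^{-1}(\cdot)$, then use the $j\mapsto r-j$ reindexing together with $\chi_1(r-j)=\chi_1(-1)\chi_1(j)$ to kill the even or odd piece according to the parity of $\chi_1$. The only cosmetic difference is your (harmless, and in fact unnecessary) aside about the fixed point $j=r/2$, which the reindexing argument already handles.
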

	\begin{proof}
		Consider the following finite sum over $j$ occurring in the definition of \eqref{J r chi def}:
		\begin{align*}
			\sum_{j=1}^{r} \chi_1(j) \log \left( 1-\zeta_r^j t^{x} \right)
			&= \sum_{j=1}^{r} \chi_1(j) \log \left( 1- \cos\left(\frac{2 \pi j}{r}\right)t^x + i \sin\left(\frac{2 \pi j}{r}\right)  t^{x} \right) \\
			&=: A+B,
		\end{align*}
		where
		\begin{align*}
			A =  \sum_{j=0}^{r} \chi_1(j) \left\{ \frac{1}{2} \log \left(  1+t^{2x}-2 \cos\left(\frac{2 \pi j}{r}\right)t^{x}   \right) \right\},\hspace{4mm}
			B = -i \sum_{j=0}^{r} \chi_1(j)  \tan^{-1} \left( \frac{ \sin \left( \frac{2\pi j}{r} \right) t^x}{1-\cos \left( \frac{2\pi j}{r} \right) t^x} \right).
		\end{align*}
		On replacing $j$ by $r-j$ in $A$ and adding the resulting expression to $A$, we see that
		\begin{align*}
			2 A =  \frac{1}{2} \sum_{j=0}^{r} \left(\chi_1(j) + \chi_1(r-j) \right)  \log \left(  1+t^{2x}-2 \cos\left(\frac{2 \pi j}{r}\right)t^{x}   \right).
		\end{align*}
		Hence, $A=0$ if $\chi_1$ is an odd character. Now replace $j$ by $r-j$ in $B$ and add the resulting expression to $B$ so that
		\begin{align*}
			2 B = -i \sum_{j=0}^{r} \left(\chi_1(j) - \chi_1(r-j)\right)  \tan^{-1} \left( \frac{ \sin \left( \frac{2\pi j}{r} \right) t^x}{1-\cos \left( \frac{2\pi j}{r} \right) t^x} \right).
		\end{align*}
		Hence, $B=0$ if $\chi_1$ is an even character. This completes the proof.
	\end{proof}

\begin{example}
Let $\chi$ be the non-trivial Dirichlet character modulo 6 defined below.
\begin{center}
	\begin{tabular}{|c|c c c c c c|} 
		$j$  & 1 & 2 & 3 & 4 & 5 & 6 \\
		\hline 
		$\chi(j)$  & 1 & 0 & 0 & 0 & -1 & 0
	\end{tabular}
\end{center}
Observe that
\begin{align*}
	G(\ell,\chi) =
	\begin{cases} 
		i \sqrt{3} & \text{if } \ell \equiv 1,2 \ (\text{mod } 6), \\
		-i \sqrt{3} & \text{if }\ell \equiv 4,5 \ (\text{mod } 6), \\
		0 &\text{else. } \\
	\end{cases}
\end{align*}
Then $\sum_{\ell=1}^{6} G(\ell,\chi) t^{\ell} = i\sqrt{3} t(1+t-t^3-t^4)$. From \eqref{B(6, 2)} and the fact that $\chi$ is odd, we get, using Lemma \ref{J r chi odd even lemma},
\begin{align*}
	J_{6,\chi, \chi}(z,x) = \frac{2\sqrt{3}}{\Gamma(z)} \int_{0}^{1} \frac{\left( \log \left( \frac{1}{t} \right) \right)^{z-1}}{t^2-t+1} \tan^{-1} \left( \frac{\sqrt{3}t^x}{2-t^x} \right)  \, dt.
\end{align*}
Put $z=1$ in \eqref{fe general extended} to get  
\begin{align}
	J_{6,\chi, \chi}(1,x) + J_{6,\chi, \chi} \left(1,\frac{1}{x}\right)  =\frac{4 }{9}\pi^2.  \label{illustration 1 z=1}
\end{align}
Put $x=1$ in \eqref{illustration 1 z=1} to get
\begin{align*}
	\int_{0}^{1} \frac{1}{t^2-t+1} \tan^{-1} \left( \frac{\sqrt{3}t}{2-t} \right)  \, dt = \frac{1}{9\sqrt{3}} \pi^2.
\end{align*}
\end{example}

\begin{example}
 Let $\chi$ be the non-trivial primitive Dirichlet character modulo 16 defined below.
\begin{center}
	\begin{tabular}{|c|c c c c c c c c c c c c c c c c|} 
		$j$  & 1 & 2 & 3 & 4 & 5 & 6 & 7 & 8 & 9 &10 &11 &12&13 & 14& 15 &16 \\
		\hline 
		$\chi(j)$  & 1 & 0 & $i$ & 0 & $-i$ & 0 &-1 & 0 &-1 & 0 &$-i$& 0 &$i$& 0& 1& 0   
	\end{tabular}
\end{center}
Since
\begin{align*}
	G(\ell,\bar{\chi}) =
	\begin{cases} 
		-4 (-1)^{\frac{7}{8}} & \text{if } \ell \equiv \pm 1 \ (\text{mod } 16), \\
		-i 4 (-1)^{\frac{7}{8}} & \text{if }\ell \equiv \pm 3 \ (\text{mod } 16), \\
		i 4 (-1)^{\frac{7}{8}} & \text{if }\ell \equiv \pm 5 \ (\text{mod } 16), \\
		4 (-1)^{\frac{7}{8}} & \text{if }\ell \equiv \pm 7 \ (\text{mod } 16), \\
		0 &\text{else }, \\
	\end{cases}
\end{align*}
we have
$\sum_{\ell=1}^{16} G(\ell,\bar{\chi}) t^{\ell} = -4 (-1)^{\frac{7}{8}} t(1+it^2-it^4-t^6-t^8-it^{10}+it^{12}+t^{14})$.
	Also,
	\begin{align*}
	&\sum_{j=1}^{16} \chi(j) \log \left( 1+t^{2x} - 2 \cos \left( \frac{2\pi j}{16} \right) t^x \right) =  \log\left( \frac{1+t^{2x} - (\sqrt{2+\sqrt{2}})t^x}{1+t^{2x} + (\sqrt{2+\sqrt{2}})t^x}  \right) +i \log\left( \frac{1+t^{2x} - (\sqrt{2-\sqrt{2}})t^x}{1+t^{2x} + (\sqrt{2-\sqrt{2}})t^x}  \right) .
\end{align*}
Hence, from Lemma \ref{J r chi odd even lemma},
\begin{align*}
	J_{16,\chi, \bar\chi}(z,x) = \frac{4(-1)^{\frac{7}{8}}}{\Gamma(z)} \int_{0}^{1} & \frac{\left( \log \left( \frac{1}{t} \right) \right)^{z-1}(t^2-1)(t^4+(1+i)t^2+1)}{t^8+1} \\
	&\times \left\{\log\left( \frac{1+t^{2x} - (\sqrt{2+\sqrt{2}})t^x}{1+t^{2x} + (\sqrt{2+\sqrt{2}})t^x}  \right) 
	+ i \log\left( \frac{1+t^{2x} - (\sqrt{2-\sqrt{2}})t^x}{1+t^{2x} + (\sqrt{2-\sqrt{2}})t^x}  \right) \right\}  \, dt.
\end{align*}
Observe that $	J_{16,\bar\chi, \chi}(z,x)=\overline{	J_{16,\chi, \bar\chi}(z,x)}$.
Put $z=1$ and $\chi_1=\chi=\bar{\chi_2}$ in \eqref{fe general extended} to get  
\begin{align*}
	J_{16,\chi, \bar\chi}(1,x) + J_{16,\bar\chi, \chi} \left(1,\frac{1}{x}\right) 
	= -4  \left( \log^{2} \left( 1+\sqrt{2} +\sqrt{4 +2\sqrt{2}} \right) + \log^{2} \left( 1-\sqrt{2} +\sqrt{4 -2\sqrt{2}} \right) \right).  
\end{align*}
\end{example}

\subsection{Tables}\label{tables}
Some more examples illustrating Theorem \ref{J r chi FE} are given here in the form of a table. To facilitate the calculations involved, we first simplify the integrand in \eqref{J r chi def}.

Let $\chi_2$ be any Dirichlet character modulo $r$. 	Observe that $t$ divides $ \sum_{\ell=1}^{r} G(\ell,\chi_2) t^{\ell}$. Moreover, for a proper divisor $d$ of $r$, the $d^{\textup{th}}$cyclotomic polynomial, which we denote by $\varphi_d(t)$, divides $\sum_{\ell=1}^{r} G(\ell,\chi_2) t^{\ell}$. To prove this, it suffices to show that for $\zeta_d=e^{2\pi i/d}$, where $d<r$, $(t-\zeta_d)$ is a factor of $\sum_{\ell=1}^{r} G(\ell,\chi_2) t^{\ell}$. Indeed, if $n=r/d>1$, 
\begin{align*}
	\sum_{\ell=1}^{r} G(\ell,\chi_2) \zeta_d^{\ell}=\sum_{m=1}^{r-1}\chi(m)\sum_{\ell=1}^{r}e^{\frac{2\pi i(m+n)\ell}{r}}=0,
\end{align*}
since the inner sum vanishes unless $m+n=r$. However, if $m+n=r$, $\chi(m)=0$ since $\textup{gcd}(m, r)>1$.

Therefore, one can rewrite the integral $J_{r,\chi_1, \chi_2}(z,x) $ in \eqref{J r chi def} as
\begin{align}\label{rewritten J}
	J_{r,\chi_1, \chi_2}(z,x)=\frac{1}{\Gamma(z)} \int_{0}^{1} \left(\log \left( \frac{1}{t} \right) \right)^{z-1}\frac{ A_{\chi_2}(t)}{\varphi_r(t)}B_{\chi_1}(t) \, dt,
\end{align}
where
$B_{\chi_1}(t):=\sum_{j=1}^{r} \chi_1(j) \log \left( 1-\zeta_r^j t^{x} \right) $ and  $A_{\chi_2}(t):=\displaystyle\frac{-\sum_{\ell=1}^{r} G(\ell,\chi_2)t^{\ell}}{t\prod\limits_{d|r\atop{d<r}}\varphi_d(t)}$ is a polynomial of degree $\phi(r)-1$ if $\chi_2$ is a principal character, of degree $\phi(r)-2$ if $\chi_2$ is primitive, and of degree $\leq\phi(r)-2$ if $\chi_2$ is non-principal and imprimitive. Here, $\phi$ is the Euler totient function.

In the examples discussed in Table 2, we always consider $\chi_1=\chi$ and $\chi_2=\overline{\chi}$, where the character $\chi$ is defined in Table 1. Each entry in the first column of Table 1 is of the form $[r, j]$, which denotes the character $\chi$ with modulus $r$ and index $j$. The \emph{Mathematica 12} command for evaluating the Dirichlet character of modulus $r$ and index $j$ at the number $n$ is \url{DirichletCharacter[r, j, n]}.  

 With $J_{r, \chi_1, \chi_2}(z, x)$  defined in  \eqref{rewritten J}, the functional equation depicted in Table 2 is of the form 
\begin{align*}
	J_{r,\chi, \bar\chi}(1,x) + J_{r,\bar\chi, \chi}\left(1,\frac{1}{x}\right) = C_\chi,
\end{align*}
where $C_\chi$ is the right-hand side of \eqref{fe general extended} when $z=1$ (or, equivalently, that of \eqref{J r chi FE z=1}) with $\chi_1=\chi=\bar{\chi_2}$.

\renewcommand{\arraystretch}{1.1} 
\begin{table}[h!]
	\centering
	\begin{tabular}{||c||c|c|c| c| c| c| c| c| c| c| c| c|| c|| c|} 
		\hline
			\backslashbox{[r,j]}{n}  & 1 & 2 & 3 & 4 & 5 & 6 & 7 & 8 & 9 &10 &11 &12& $\chi(-1)$ & \text{Conductor}\\
		\hline 
		[2,1]&  1 & 0 & - & - & - & - & - &- & - &- & - &-& 1 & 1\\ 
		\hline 
		[3,2]& 1 &-1 & 0 & - & - & - & - &- & - &- & - &-& -1 & 3\\
		\hline
		[4,2]& 1 & 0 & -1& 0& - & - & - &- & - &- & - &-& -1 & 4\\ 
		\hline
		[5,3]&1 &-1 &-1 &1 &0 & - & - &- & - &- & - &-&1 & 5\\ 
		\hline
		[5,4]&1 & -$i$& $i$ &-1& 0 & - & - &- & - &- & - &-& -1 &5\\ 
		\hline
		[8,1]& 1 & 0 & 1 & 0& 1 &0& 1 &0& - &- & - &-& 1 & 1\\ 
		\hline 
		[8,4]& 1 & 0 & 1 & 0& -1 &0& -1 &0& - &- & - &-&  -1 & 8\\ 
		\hline
		[10,3]& 1 & 0 & -1 &0 & 0 & 0  &-1 & 0 &1 & 0 &-& - & 1 &5\\ 
		\hline
		[12,2]& 1 & 0 & 0 &0 & -1 & 0  &1 & 0 &0 & 0 &-1& 0 & -1 & 3\\ 
		\hline 
		[12,3]& 1 & 0 & 0 &0 & 1 & 0  &-1 & 0 &0 & 0 &-1& 0 &-1 &4\\
		\hline
	\end{tabular}
	\caption{List of Dirichlet Characters.}
	\label{table even char}
\end{table}

\renewcommand{\arraystretch}{1.7} 
\small\begin{table}[h!]
	\centering
	\begin{tabular}{||c|c|c|c||} 
		\hline 
		$[r,j]$& $A_{\bar{\chi}}(t)$& $B_{\chi}(t)$& $C_{\chi}$\\
		\hline 
		$	[2,1]$ & $-1$&$\log(1+t^x)$ & $-\log^2(2)$\\
		\hline
		[3,2]& $i \sqrt{3}$&$-2i \tan^{-1}\left(\frac{\sqrt{3}t^x}{2+t^x}\right)$& $\frac{1}{9}\pi^2$\\
		\hline 
		
		[4,2]& $2 i$&$- 2 i \tan^{-1}(t^x)$& $\frac{1}{4}\pi^2$ \\
		\hline
		
		[5,3] & $-\sqrt{5}(t^2-1)$ &$\log\left(2+2t^{2x} - (\sqrt{5}-1)t^x\right)-\log\left(2+2t^{2x} + (\sqrt{5}+1)t^x \right)$ &$-\log^2 \left(\frac{2 }{3-\sqrt{5}}\right)$  \\
		\hline
		
		[5,4]& $i (-15 + 20 i)^\frac{1}{4} (t^2+ (1 -i) t+ 1))$&$- 2 i\left( \tan^{-1}\left(\frac{\sqrt{10+2\sqrt{5}}t^x}{4+(1-\sqrt{5}) t^x}\right)-i\tan^{-1}\left(\frac{\sqrt{10-2\sqrt{5}}t^x}{4+(1+\sqrt{5}) t^x}\right)\right)$& $\frac{2}{5}\pi^2$\\
		\hline
		[8,1]& $-4t^3$&$\log\left(1-\sqrt{2} t^x+t^{2x}\right)+\log\left(1+\sqrt{2}t^x+t^{2x}\right)$&$-\log^2(2)$ \\
		\hline
		[8,4]& $2\sqrt{2}i (t^2+1)$&$- 2 i\left( \tan^{-1}\left(\frac{\sqrt{2}t^x}{2-\sqrt{2} t^x}\right)+\tan^{-1}\left(\frac{\sqrt{2}t^x}{2+\sqrt{2} t^x}\right)\right)$& $\pi^2$ \\
		\hline
		[10,3]& $-\sqrt{5} (t^2-1)$ &$\log\left(1-\frac{1+\sqrt{5}}{2}t^x+t^{2x}\right)-\log\left(1-\frac{1-\sqrt{5}}{2}t^x+t^{2x}\right)$&$-\log^2\left(\frac{7+3\sqrt{5}}{2}\right)$ \\
		\hline
		
		[12,2]& $2\sqrt{3}i t$ &$- 2 i\left( \tan^{-1}\left(\frac{t^x}{2-\sqrt{3} t^x}\right)-\tan^{-1}\left(\frac{t^x}{2+\sqrt{3} t^x}\right)\right)$& $\frac{4}{9}\pi^2$ \\
		\hline
		[12,3]& $2i (t^2+1)$ &$- 2 i\left( \tan^{-1}\left(\frac{t^x}{2-\sqrt{3} t^x}\right)+\tan^{-1}\left(\frac{t^x}{2+\sqrt{3} t^x}\right)\right)$& $\pi^2$ \\
		\hline
	\end{tabular}
	\caption{Expressions for $A_{\bar{\chi}}(t), B_{\chi}(t)$ and $C_{\chi}$ occurring in \eqref{rewritten J}.}
	\label{table odd char}
\end{table}

\normalsize

\begin{remark}
\textup{(i)} Note that $C_{\chi}$ is a multiple of $\pi^2$ if $\chi$ is odd, and is a multiple of the square of log if $\chi$ is even. Compare, for example, $[8, 4]$ with $[8, 1]$.

\noindent
\textup{(ii)} The  degrees of $A_{\bar\chi}(t)$ in $[12,2]$ and $[12,3]$ are different even though both the characters are odd and imprimitive.
\end{remark}
\subsection{Extension to periodic functions}

The earlier results in this section which concern Dirichlet characters can be extended to accommodate periodic functions $f$ of period $r$ satisfying $f(r)=0$. We give below an example to illustrate this.

Let $f(n)$ be a periodic function defined on $\mathbb{N}$ with a period $4$ as defined in the table below,
\begin{center}
	\begin{tabular}{|c|c c c c|} 
		$j$  & 1 & 2 & 3 & 4   \\
		\hline 
		$f(j)$  & -1 & $\pi$ & $e$ & 0
	\end{tabular}
\end{center}
Define
\begin{align}
	J_{4,f,f}(z,x):= \sum_{j=1}^{3} \sum_{k=1}^{3} f(j) f(k) \mathscr{F}_z(x ,i^{j}, i^{k}). \label{Jzx start}
\end{align}
%
From \eqref{script F def z}, 
\begin{align}
	J_{4,f,f}(z,x)
	&= \frac{1}{\Gamma(z)} \int_{0}^{1} \left( \log(\tfrac{1}{t})\right)^{z-1} \sum_{j=1}^{3} \left( f(j)  \log(1-i^{j}t^x) \right) \sum_{k=1}^{3} \left( \frac{f(k) }{i^{4-k}-t} \right) \, dt.  \label{Jzx mid}
\end{align}
The sum over $k$ can be simplified as
\begin{align}
	\sum_{k=1}^{3} \frac{f(k)}{i^{4-k}-t}  = \sum_{k=1}^{4}  \frac{f(5-k) }{i^{k-1}-t} =-\frac{i+ie+\pi +(i+e+ie-1)t +(e+\pi-1)t^2}{t^3+t^2+t+1}. \label{Jzx second}
\end{align}
Also,
\begin{align} 
	\sum_{j=1}^{3} f(j) \log(1-i^{j}t^x) 
	&= \frac{1}{2}(e-1) \log(1+t^{2x}) + \pi \log(1+t^x) +i(e+1) \tan^{-1}(t^x). \label{Jzx third}
\end{align}
Substitute \eqref{Jzx second} and \eqref{Jzx third} in \eqref{Jzx mid} to get
\begin{align*}
	J_{4,f,f}(z,x)= -\frac{1}{\Gamma(z)}  \int_{0}^{1}& \left( \log(\tfrac{1}{t})\right)^{z-1} \left( \frac{i+ie+\pi +(i+e+ie-1)t +(e+\pi-1)t^2}{t^3+t^2+t+1} \right) \notag \\&\times \left( \frac{1}{2}(e-1) \log(1+t^{2x}) + \pi \log(1+t^x) +i(e+1) \tan^{-1}(t^x) \right) \, dt. 
\end{align*}
From \eqref{small theorem} and \eqref{Jzx start}
\begin{align}
	J_{4,f,f}(z,x) +x^{1-z}J_{4,f,f}\left(z,\frac{1}{x}\right) 
	& =  - \sum_{n=1}^{\infty} \sum_{m=1}^{\infty} \frac{\left(\sum_{j=1}^{4}f(j)i^{mj}\right)\left( \sum_{k=1}^{4}f(k)i^{nk}\right)}{n m (n+mx)^{z-1}} \notag \\
	&= - \sum_{n=1}^{\infty} \sum_{m=1}^{\infty} \frac{g(m) g(n)}{n m (n+mx)^{z-1}}, \label{Jzx alternative}
\end{align}
where, $g(m)$ is the Gauss-type sum associated with the periodic function $f$. We then have
\begin{align*}
	g(m)=\sum_{j=1}^{4}f(j)i^{mj}= \begin{cases} 
		-1+e+\pi, & \text{if }m \equiv 0 \ (\text{mod } 4),\\
		-i-ie-\pi, & \text{if } m \equiv 1 \ (\text{mod } 4), \\
		1-e+\pi, & \text{if }m \equiv 2 \ (\text{mod } 4), \\
		i+ie-\pi, & \text{if }m \equiv 3 \ (\text{mod } 4),
	\end{cases}
\end{align*}
which is a periodic function of a period 4. 
Using Proposition \ref{crandall}, one can let $z\to 1$ in \eqref{Jzx alternative} so that 
\begin{align*}
	J_{4,f,f}(1,x) + J_{4,f,f}\left(1,\frac{1}{x}\right) = - \left( \sum_{n=1}^{\infty} \frac{g(n)}{n} \right)^2 
	&= -\frac{1}{16}\left(\pi i(e+1) +2(e+2\pi-1)\log(2)\right)^2. 
\end{align*}
Hence, $J_{4,f,f}(1,1) = -\frac{1}{32}\left(\pi i(e+1) +2(e+2\pi-1)\log(2)\right)^2.$

\begin{remark}
A famous problem of Chowla  asks whether there exists a non-zero integer-valued function $g$ with prime period $r$ such that $g(r) = 0$, $\sum_{m=1}^{r}g(m)=0$ and $\sum_{n=1}^{\infty}g(n)/n=0$. This problem was settled in the negative by Baker, Birch and Wirsing \cite{bbw}.

Let $r$ be a prime and suppose $g$ is an $r$-periodic integer-valued  function satisfying the first two properties. Then using \cite[p.~160, Theorem 8.4]{Apostol}, one can get an $f$ which is also $r$-periodic. With these $f$ and $g$, one can get an identity of the form
$J_{r,f,f}(1,x) + J_{r,f,f}\left(1,\frac{1}{x}\right) = - \left( \sum_{n=1}^{\infty} g(n)/n \right)^2$.  Can this identity be used to give another proof of the result of Baker, Birch and Wirsing?
\end{remark}

\begin{remark}
The result in \eqref{Jzx alternative} can be extended starting with two different $r$-periodic functions $f_1$ and $f_2$ such that $f_1(r)=0=f_2(r)$. The result will then be of the form
\begin{align*}
	J_{r,f_1,f_2}(z,x) +x^{1-z}J_{r,f_2,f_1}\left(z,\frac{1}{x}\right) 
= - \sum_{n=1}^{\infty} \sum_{m=1}^{\infty} \frac{g_1(m) g_2(n)}{n m (n+mx)^{z-1}},
\end{align*}
where $g_k(m):=\sum_{j=1}^{r}f_k(j)e^{2\pi im j/r}$ for $k=1, 2$.
\end{remark}
\section{Concluding remarks}	

As shown in this paper, the insertion of the parameter $x$ in $\zeta_{\textup{MT}}(1, 1, z-1)$ defined in \eqref{MT} so as to obtain $\Theta(1, 1, z-1, x)$, that is,  $\sum\limits_{n=1}^{\infty}\sum
\limits_{m=1}^{\infty}\displaystyle\frac{1}{nm(n+mx)^{z-1}}$ has many ramifications: it is connected with a generalization of the Herglotz-Zagier function $F(x)$, namely, $\Phi(z, x)$, it admits a three-term functional equation. Moreover, its character analogue $\sum\limits_{n=1}^{\infty} \sum\limits_{m=1}^{\infty} \displaystyle\frac{\chi_1(m)\chi_2(n)}{ n m(n+mx)^{z-1}}$, or more generally, the right-hand side of \eqref{fe-general}, induces an interesting functional equation for an integral $J_{r, \chi_1, \chi_2}(z, x)$ whose two special cases have occurred in the literature. Another functional equation derived in this paper is the two-term functional equation for the Ishibashi functions $\Phi_k(x)$, which was missing in the literature and which, for $k=1$, gives the one for the Herglotz-Zagier function $F(x)$ as a special case. The case $\Theta(r, s, z-1, x)$ for higher $r$ and $ s$ deserves separate study and will be undertaken in a future work.

The current study generates further interesting questions worth investigating which are listed below.\\

 \noindent		
(1) The special case $J(x)$ of $J_{r, \chi_1, \chi_2}(z, x)$ has been previously evaluated by Herglotz \cite{herglotz} and by Muzzaffar and Williams \cite{muzwil} for values of $x$ of the form $n+\sqrt{n^2-1}$. The techniques for their evaluations stem from algebraic number theory. Radchenko and Zagier \cite{raza} not only evaluated $J(x)$ for $x$ of the form $n+\sqrt{n^2+1}$ but also when it takes rational values. Muzzaffar and Williams also gave explicit evaluations for $T(n+\sqrt{n^2-1})$ defined in \eqref{JT-def}.

Since various choices can be made for $r$, $z$, $x$, as well as for the Dirichlet characters $\chi_1$, $\chi_2$ in the definition of $J_{r, \chi_1, \chi_2}(z, x)$, questions similar to above can be asked more generally, that is, can we explicitly evaluate this integral at these special choices of the parameters? What techniques would be required to obtain such explicit evaluations? A few such integrals have been given in Table 2. 

\vspace{0.2cm}
 \noindent
(2) The two-term functional equation for the Ishibashi function $\Phi_k(x)$ is quite involved and could still possibly be made more explicit. The limit $L_k^{*}(x)$ in \eqref{lk*} remains to be evaluated. It is clear from its definition that for its evaluation we need the full power series expansion of $I(z, x)$ around $z=1$. In view of \eqref{i=h1-h2}, this, in turn, requires us to obtain the Laurent series expansions of $H_1(z, x)$ and $H_2(z, x)$. The difficulty is in getting the Laurent series expansion of the integral in \eqref{h2eval}. Now \eqref{h1eval} implies that as $z\to1$,
$H_1(z, x)=\frac{\frac{1}{2}\log^{2}(x)}{z-1}+O(1)$. Moreover, $\frac{1}{z-1}((x+1)^{1-z}-x^{1-z}-1)\zeta(z+1)=\frac{-\pi^2/6}{z-1}+O(1)$, and hence \eqref{h2eval} and the analyticity of $I(z, x)$ at $z=1$ implies that as $z\to 1$,
\begin{equation*}
\int_{0}^{\infty}\frac{t^{z-2}}{\Gamma(z)}\left\{\textup{Li}_2\left( \frac{e^{-(x+1)t}-e^{-xt}}{1-e^{-xt}} \right)+\textup{Li}_2\left( \frac{e^{-(x+1)t}-e^{-t}}{1-e^{-t}} \right)\right\}dt=\frac{-\frac{1}{2}\log^{2}(x)-\frac{\pi^2}{6}}{z-1}+O(1).
\end{equation*}
However, we need the Laurent series coefficients for evaluating $L_k^{*}(x)$, not just the principal part. Another way to look at $L_{k}^{*}(x)$ is through Proposition \ref{another lk*}, which requires evaluating the integrals occurring in it in closed-form, which looks difficult. 

\vspace{0.2cm}
\noindent
(3) It would be worthwhile to see if the constants $\Phi_k(1)$ occurring in \eqref{two-term eqn for Ishibashi} could be explicitly evaluated. Zagier \cite[p.~173]{zagier} evaluated $\Phi_1(1)$ explicitly by exploiting the fact that $\sum_{1\leq n<m\leq N}\frac{1}{mn}$ and $\sum_{1\leq m<n\leq N}\frac{1}{mn}$ are equal. This evaluation is given in \eqref{ef1}. The symmetry, however, breaks down for higher $k$ since one has to deal with $\sum_{1\leq n<m\leq N}\frac{\log(m)}{mn}$ and $\sum_{1\leq m<n\leq N}\frac{\log(m)}{mn}$, which are \emph{not} equal.

\vspace{0.2cm}
\noindent
(4) The three-term functional equation for $\Phi(z, x)$ still remains to be found whereas the one for $\Theta(z, x)$ was derived in this paper. In view of \eqref{decomposition}, it seems that $\Phi(z, x)$ is more fundamental than $\Theta(z, x)$. 

Once the three-term functional equation for $\Phi(z, x)$ is obtained, it will give, via differentiation with respect to $z$, the three-term functional equation for $\Phi_k(x)$. The one for $\Phi'_k(x)$ is obtained in Theorem \ref{Thm 3term-curious}, whose special case $k=0$ is due to Zagier; see the equation before (7.8) in \cite{zagier}. 

\vspace{0.2cm}
\noindent
(5) For simplicity, we have considered $x>0$ while deriving our functional equations. However, they seem to hold for Re$(x)>0$ as well. 

\vspace{0.2cm}
\noindent
(6) The integral in \eqref{J(z,x) def} as well as the ones of similar type are used in evaluation of Feynman and relativistic phase space integrals \cite[p.~1233]{kolbig},  \cite{Gastmans}. It would then be interesting to see if our more general integral in \eqref{J r chi def} could also be used for this purpose.
	
\vspace{0.2cm}
\noindent
(7) Note that the identity we have obtained in \eqref{1=z} is valid in $|u|<1$ and $|v|<1$ whereas the same identity is obtained by Kumar and Choie \cite[Equation (2.6)]{kumar-choie} for $u, v\in\{w: |w|\leq1, w\neq1\}$. In order to obtain this result for $u, v$ on the unit circle such that $u\neq1, v\neq1$, one may let $z\to 1$ in Theorem \ref{small theorem} but then the interchange of limit and summation poses a problem.  The right approach is to analytically continue the right-hand side of Theorem \ref{small theorem} and then let $z\to 1$ in the analytically extended version of the theorem (which exists for Re$(z)>0$). 

\vspace{0.2cm}
\noindent
(8) Does there exist a three-term functional equation of the type in Theorem \ref{theta-3t} for the right-hand side of \eqref{fe-general}, or, for that matter, for the right-hand side of Theorem \ref{small theorem}?

\vspace{0.2cm}
\noindent
(9) In Theorem \ref{J(z,x) F_z(x) relation}, we gave a relation between $\Phi(z, x)$ and $J(z, x)$. Since the latter is a special case of $J_{r, \chi_1, \chi_2}(z, x)$, it may be of interest to find a relation between $\Phi(z, x)$ and $J_{r, \chi_1, \chi_2}(z, x)$.

\vspace{0.2cm}
\noindent
(10) From \eqref{passing the limit}, we have $	\lim_{z \to 1} \sum_{n=1}^{\infty} \sum_{m=1}^{\infty} \frac{G(m,\chi_1)G(n,\chi_2)}{m n (n+mx)^{z-1}} =	 \sum_{n=1}^{\infty} \sum_{m=1}^{\infty}\lim_{z \to 1} \frac{G(m,\chi_1)G(n,\chi_2)}{m n (n+mx)^{z-1}} $. If we first differentiate both sides of \eqref{fe-general} with respect to $z$, let $z\to1$, $x=1$, and \emph{formally} interchange the order of $\lim_{z\to1}\frac{d}{dz}$ and the double sum, then we obtain identities of the kind
\vspace{-1.5mm}
\begin{align*}
	\int_{0}^{1} \frac{\log(\log(\tfrac{1}{y}))\log(1+y)}{1+y} \, dy &= - \frac{1}{2} \gamma \log^2(2) 
	- \frac{1}{2} \sum_{n=1}^{\infty} \sum_{m=1}^{\infty}  \frac{(-1)^{m+n}\log(m+n)}{nm},
\end{align*}
\vspace{-1.5mm}
which seem to hold true, at least numerically. However, we are unable to justify the interchange.  

\vspace{0.2cm}
\noindent
(11) The identity in \eqref{fe-general} seems to hold for Re$(z)>0$ numerically.  While the left-hand side is well-defined and analytic in this region, the convergence/divergence of the right-hand side in the strip 
 $0<\textup{Re}(z)\leq1$, and for $z\neq1$, seems to be delicate and is yet to be understood.

\vspace{0.2cm}
\noindent
(12) A transformation for the double cotangent function given by Tanaka \cite[Equation (1.8)]{tanaka} is equivalent to Ramanujan's formula \eqref{w1.26}. The latter is a two-term functional equation for a Herglotz-Zagier type function. Thus it would be of merit to see the possible  connections of multiple cotangent functions of Tanaka with Herglotz/ Ishibashi-Herglotz functions. 

\vspace{0.2cm}
\noindent
(13) From the discussion following the statement of Theorem \ref{principal part}, it is clear that the behavior of $\Theta(1, 1, z-1, 1)$ as $z\to1$ is quite different from that of $\Theta(z, z, 0,1)(=\zeta^{2}(z))$. To add to the mystery, the behavior of $\Theta(z, z, z-1, 1)$ as $z\to1$ seems to be very different from the above two cases which certainly merits its independent investigation! 

	\begin{center}
		\textbf{Acknowledgements}
	\end{center}
The authors sincerely thank Rahul Kumar for sending us copies of \cite{Gastmans} and \cite{ogieveckii}. They also thank
Wadim Zudilin, Kohji Matsumoto, Takashi Nakamura, M.~Lawrence Glasser, Don Zagier, Jan Min\'{a}\v{c} and Tung T. Nguyen for helpful discussions. The first author is supported by the Swarnajayanti Fellowship grant SB/SJF/2021-22/08 of SERB (Govt. of India). The second author is an Institute Postdoc supported by the grant MIS/IITGN/R\&D/MATH/AD/2324/058 of IIT Gandhinagar. Both the authors sincerely thank the respective agencies for their generous support.


\begin{thebibliography}{00}
			
			\bibitem{as}
			M.~Abramowitz and I.~A.~Stegun, eds., \emph{Handbook of
				Mathematical Functions}, Dover, New York, 1965.
				
					\bibitem{Apostol} T.~Apostol, {\em Introduction to analytic number theory}, Springer Science+Business Media, New York, 1976.
					
			\bibitem{arakawa-kaneko}
			T.~Arakawa and M.~Kaneko, \emph{Multiple zeta values, poly-Bernoulli numbers, and related zeta functions}, Nagoya Math.~J.~\textbf{153} (1999), 189--209.
			
			\bibitem{bailey-borwein}
			D.~H.~Bailey and J.~Borwein, \emph{Computation and structure of character polylogarithms with applications to character Mordell-Tornheim-Witten sums}, Math.~Comp.~\textbf{85} (2016), 295--324.
			
			\bibitem{bbw}
			A.~Baker, B.~J.~Birch and E.~A.~Wirsing, \emph{On a problem of Chowla}, J.~Number Theory~\textbf{5} (1973), 224-236.
			
				\bibitem{bcbramsecnote}
			B.~C.~Berndt, \emph{Ramanujan's Notebooks, Part \textup{II}}, Springer-Verlag, New York, 1989.
			
			\bibitem{bcbad}
			B.~C.~Berndt and A.~Dixit, \emph{A transformation formula involving the Gamma and Riemann zeta functions in Ramanujan's Lost Notebook}, \emph{The Legacy of Alladi Ramakrishnan in the Mathematical Sciences}, K. Alladi, J. Klauder, C. R. Rao, Eds, Springer, New York, 2010, pp.~199--210.
%
	
			\bibitem{berndt-evans}
			B.~C.~Berndt and R.~J.~Evans, \emph{An integral functional equation of Ramanujan related to the dilogarithm}, in Number Theory, Proc.~First~Conf.~Canadian~Number~Theory~Assoc., Banff, R. A. Mollin, ed., Walter de Gruyter, Berlin, 1990, pp. 1–5.
			
			\bibitem{betcon}
			S.~Bettin and J.~B.~Conrey, \emph{Period functions and cotangent sums}, Algebra and Number Theory~\textbf{7} No. 1 (2013), 215--242.
		
		\bibitem{bbg}
		D.~Borwein, J.~M.~Borwein and R.~Girgensohn, \emph{Explicit evaluation of Euler sums}, P.~Edinburgh Math.~Soc.~\textbf{38} no. 2 (1995), 277--294.
			
		
		\bibitem{Borwein-Dilcher}
		J.~M.~Borwein and K.~Dilcher, \emph{
			Derivatives and fast evaluation of the Tornheim zeta function}, Ramanujan J.~\textbf{45} (2018), 412--432.

\bibitem{bradley-zhou}
D.~M.~Bradley and X.~Zhou, \emph{On Mordell-Tornheim sums and multiple zeta values}, Ann.~Sci.~Math.~Qu\'{e}bec~\textbf{34} no. 1 (2010), 15--23.
			
		\bibitem{bringmann-franke}
		K.~Bringmann and J.~Franke, \emph{
			An asymptotic formula for the number of $n$-dimensional representations of $\textup{SU}(3)$}, Rev. Mat. Iberoam.~\textbf{39} (2023), no. 5, 1599–1640.
			
%
		
		\bibitem{chelidze}
	V.~G.~Chelidze, \emph{Summability methods for double series and double integrals} (Russian), Izdat. Tbilis. Univ., Tbilisi, 1977, 399 pp.
	 
		\bibitem{cohen}
		H.~Cohen, \emph{Number Theory Vol. \textup{II}: Analytic and Modern Tools}, Graduate Texts in Mathematics, Springer, 2007.
		
		\bibitem{sound}
B.~Conrey, A.~Granville, B.~Poonen and K.~Soundararajan, \emph{Zeros of Fekete polynomials}, Ann. Inst. Fourier (Grenoble)~\textbf{50} no. 3 (2000), 865--889.

		
		\bibitem{crandall-original}
		R.~E.~Crandall, \emph{Unified algorithms for polylogarithm, $L$-series, and zeta variants} In: \emph{Algorithmic Reflections}: Selected Works. Portland, OR: PSIpress.
		
			\bibitem{deninger}
		C.~Deninger, \emph{On the analogue of the formula of Chowla and Selberg for real quadratic fields}, J.~Reine Angew. Math.~\textbf{351} (1984), 171--191.
		
				\bibitem{dilcher}
		K.~Dilcher, \emph{On generalized gamma functions related to the Laurent coefficients of the Riemann zeta function}, Aequationes Math.~\textbf{48} (1994), 55--85.
		
		\bibitem{dilcher-amm}
	K.~Dilcher, \emph{Analytic continuations of character and alternating Tornheim zeta functions}, Amer.~Math.~Monthly~\textbf{128} no. 9 (2021), p.~780--795.
	
%
		
		\bibitem{hhf1}
	A.~Dixit, R.~Gupta and R.~Kumar, \emph{Extended higher Herglotz functions \textup{I}. Functional equations}, Adv.~Appl.~Math.~\textbf{153} (2024), 102622 (41 pages).
			
		\bibitem{dss}
		A.~Dixit, S.~Sathyanarayana and N.~Guru Sharan, \emph{Modular relations involving generalized digamma functions}, J.~Math.~Anal.~Appl.~\textbf{539} (2024), 128479 (37 pages).																																																																																																																																																																																																																																																																																																																															
		
		\bibitem{Erdelyi}
A.~Erdelyi, W.~Magnus, F.~Oberhettinger and F.~G.~Tricomi, \emph{Higher transcendental functions} Vol. I, 
		Based, in part, on notes left by Harry Bateman. McGraw-Hill Book Co., Inc., New York-Toronto-London, 1953, 302 pp.
		
		\bibitem{folsom}
		A.~Folsom, \emph{Twisted Eisenstein series, cotangent-zeta sums, and quantum modular forms}, Trans.~London Math.~Soc.~\textbf{2020} 7(1), 33--48.
		
		\bibitem{Gastmans}
		R.~Gastmans and W.~Troosts, \emph{On the evaluation of polylogarithmic integrals}, Simon Stevin~\textbf{55} no. 4 (1981), 205--219.
						
		\bibitem{gr}
		I.S.~Gradshteyn and I.M.~Ryzhik, eds., \emph{Table of Integrals,
			Series, and Products}, 7th ed., Academic Press, San Diego, 2007.
		
%
				\bibitem{apg3}
		A.~P.~Guinand, \emph{A note on the logarithmic derivative of the
			Gamma function}, Edinburgh Math. Notes \textbf{38} (1952), 1--4.
			
				\bibitem{gupta-kumar}
		R.~Gupta and R.~Kumar, \emph{Extended higher Herglotz function II}, J.~Math.~Anal.~Appl.~\textbf{518} (2023), 126720 (16 pp.)
		
		\bibitem{herglotz}
		G.~Herglotz, \emph{\"{U}ber die Kroneckersche Grenzformel f\"{u}r reelle, quadratische K\"{o}rper I}, Ber.~Verhandl.~S\"{a}chsischen~Akad.~Wiss.~Leipzig~\textbf{75} (1923), 3--14.
		
		\bibitem{ishibashi}
		M.~Ishibashi, \emph{Laurent coefficients of the zeta function of an indefinite quadratic form}, Acta Arith.~\textbf{106} No. 1 (2003), 59--71.
		
		
		\bibitem{kolbig}
		K.~S.~K\"{o}lbig, \emph{Nielsen's generalized polylogarithms}, SIAM J.~Math.~Anal.~\textbf{17} no. 5 (1986), 1232--1258.
	
		\bibitem{komori-matsumoto-tsumura}
		Y.~Komori, K.~Matsumoto and H.~Tsumura, \emph{The Theory of Zeta-Functions of Root Systems}, Springer Monographs in Mathematics, Springer, Singapore, 2023, 414 pp.
		
		\bibitem{kumar-choie}
		R.~Kumar and Y.~Choie, \emph{Arithmetic properties of the Herglotz-Zagier-Novikov function}, Adv.~Math.~\textbf{433} (2023), 109315 (36 pages).
		
		
		\bibitem{lewin}
		L.~Lewin, \emph{Polylogarithms and associated functions}, North-Holland Publishing Co., New York-Amsterdam, 1981. xvii+359 pp.
		 
		\bibitem{lewzag}
		J.~Lewis and D.~Zagier, \emph{Period functions for Maass wave forms. \textup{I}}, Ann.~Math.~\textbf{153}, No. 1 (2001), 191--258.
		
		\bibitem{mos}
		W.~Magnus, F.~Oberhettinger and R.~P.~Soni, \emph{Formulas and Theorems for the Special Functions of Mathematical Physics}, Springer, Berlin, 1966.
		
		\bibitem{maier}
		W.~Maier, \emph{Abbau singul\"{a}rer linien}, Math.~Ann.~\textbf{169} (1967), 102--117.
		
		
		
		\bibitem{matsumoto}
		K. Matsumoto, \emph{On analytic continuation of various multiple zeta-functions}, Surveys in Number Theory: Papers from the Millennial Conference on Number Theory. AK Peters: CRC Press, 2002.
		
				\bibitem{matsumoto-bonn}
				K.~Matsumoto, \emph{On Mordell-Tornheim and other multiple zeta-functions}, Proceedings of the Session in analytic number theory and Diophantine equations (Bonn, January-June 2002), D.~R.~Heath-Brown and B.~Z.~Moroz, eds., vol. 360, Bonner Mathematische Schriften, no. 25, 2003, 17 pp.
				
	 
		
		\bibitem{minac-jnt}
J.~Min\'{a}\v{c}, T.~T.~Nguyen and N.~D.~T\^{a}n, \emph{On the arithmetic of generalized Fekete polynomials}, Exp.~Math. (2023), DOI: 10.1080/10586458.2023.2293283.

		\bibitem{mordell}
		L.~J.~Mordell, \emph{On the evaluation of some multiple series}, J.~London Math.~Soc.~\textbf{33} (1958), 368--371.
		
	
		\bibitem{muzwil}
		H.~Muzaffar and K.~S.~Williams, \emph{A restricted Epstein zeta function and the evaluation of some definite integrals}, Acta Arith.~\textbf{104}, no. 1 (2001), 23--66.
		
		\bibitem{nakamura}
		T.~Nakamura, \emph{Double Lerch series and their functional relations}, Aequationes Math.~\textbf{75} no. 3 (2008), 251--259.
		
		\bibitem{navas}
		L.~M.~Navas, F.~J.~Ruiz and J.~L.~Varona, \emph{The Lerch transcendent from the point of view of Fourier analysis}, J.~Math.~Anal.~Appl.~\textbf{431} (2015), 186--201.
		
		\bibitem{novikov}
		A.~P.~Novikov, \emph{Kronecker's limit formula in a real quadratic field}, Math.~USSR, Izv.~\textbf{17} (1981), 147--176.
		
		
		\bibitem{ogieveckii}
		I.~E.~Ogievecki\u{i}, \emph{On criteria of convergence for a class of double series} (Russian) 
		Doklady Akad.~Nauk SSSR (N.S.)~\textbf{58} (1947), 1893--1896.
		 
		\bibitem{nist}
		F.~W.~J.~Olver, D.~W.~Lozier, R.~F.~Boisvert and C.~W.~Clark, eds., \emph{NIST Handbook of Mathematical Functions}, Cambridge University Press, Cambridge, 2010.
		
		
		
		\bibitem{raza}
		D.~Radchenko and D.~Zagier, \emph{Arithmetic properties of the Herglotz function}, J.~Reine~Angew.~Math.~\textbf{797} (2023), 229--253.
		
		\bibitem{ramanujan indian}
	S.~Ramanujan, \emph{Question 783}, J.~Indian Math.~Soc.~\textbf{10} (1918), 397--399.
	
		\bibitem{ramnote}
		S.~Ramanujan, Notebooks (2 volumes), Tata Institute of Fundamental Research, Bombay, 1957; second ed., 2012.
		
		\bibitem{lnb}
		S.~Ramanujan, \emph{The Lost Notebook and Other Unpublished
			Papers}, Narosa, New Delhi, 1988.
			
		\bibitem{romik}
		D.~Romik, \emph{On the number of $n$-dimensional representations of $\textup{SU}(3)$, the Bernoulli inumbers, and the Witten zeta function}, Acta Arith.~\textbf{180} (2017), no. 2, 111--159.
		
		\bibitem{rudin}
		W.~Rudin, \emph{Principles of Mathematical Analysis}, 3rd Ed., International Series in Pure and Applied Mathematics, McGraw-Hill Book Co., New York, 1976.
		
		\bibitem{subbarao}
		M.~V.~Subbarao and R.~Sitaramachandra Rao, \emph{On some infinite series of L.~J.~Mordell and their analogues}, Pacific J.~ Math.~\textbf{119}, no. 1 (1985), 245--255.
		
		\bibitem{tanaka}
		H.~Tanaka, \emph{Stirling modular forms and special values of multiple cotangent functions}, Ramanujan J.~\textbf{28} (2012), 409--422.
		
				\bibitem{temme}
		N.~M.~Temme, \emph{Special functions: An introduction to the classical functions of mathematical physics}, Wiley-Interscience Publication, New York, 1996.
		
		\bibitem{titch}
		E.~C.~Titchmarsh, \emph{The Theory of the Riemann Zeta Function}, 2nd ed., Revised by D.~R.~Heath-Brown, Clarendon Press, Oxford, 1986.
		
		\bibitem{tornheim}
		L.~Tornheim, \emph{Harmonic double series}, Amer.~J.~Math.~\textbf{72} (1950), 303--314.
		
		\bibitem{tsumura}
		H.~Tsumura, \emph{Evaluation formulas for Tornheim's type of alternating double series}, Math.~Comp.~\textbf{73}, no. 245 (2004), 251--258.
		
	
		\bibitem{valean}
		C.~I.~V\u{a}lean, \emph{(Almost) Impossible Integrals, Sums, and Series} With a Foreword by Paul J.~Nahin, Problem Books in Mathematics, Springer, Cham, 2019, 539 pp.
		
		\bibitem{vardi}
		I.~Vardi, \emph{Integrals, an introduction to analytic number theory}, Amer. Math. Monthly~\textbf{95} no. 4, 308--315.
		
		\bibitem{vz}
		M.~Vlasenko and D.~Zagier, \emph{Higher Kronecker ``limit" formulas for real quadratic fields}, J. Reine Angew. Math.~\textbf{679} (2013), 23--64.
		
		\bibitem{zagier}
		D.~Zagier, \emph{A Kronecker limit formula for real quadratic fields}, Math.~Ann.~\textbf{213} (1975), 153--184.
		
		\bibitem{zagier-witten}
		D.~Zagier, \emph{Values of zeta functions and their applications}, in Proc. First Congress of Math., Paris, vol. II, Progress in Math., vol. 120, Birkh\"{a}user, 1994, 497--512.
		
		\bibitem{zagier3term}
		D.~Zagier, \emph{Quelques cons\'{e}quences surprenantes de la cohomologie de $\textup{SL}_{2}(\mathbb{Z})$, Le\c{c}ons de Math\'{e}matiques d'aujourd'hui, Cassini, Paris (2000), 99--123.}
		
		\bibitem{zudilin-book}
		W.~Zudilin, \emph{Analytic Methods in Number Theory-When Complex Numbers Count}, Monographs in Number Theory, 11. World Scientific, Hackensack, NJ, 2024, 179 pp.
	\end{thebibliography}
	\end{document}